\newcommand\blue[1]{\textcolor{blue}{#1}}
\newcommand\red[1]{\textcolor{red}{#1}}
\newcommand\ChangeRT[1]{\noalign{\hrule height #1}}
\DeclareSymbolFont{rsfs}{U}{rsfs}{m}{n}
\DeclareSymbolFontAlphabet{\mathscrsfs}{rsfs}
\numberwithin{equation}{section}
\newcounter{smallarabics}
\newcounter{smallroman}
\newenvironment{romanenumerate}
{\begin{list}{{\normalfont\textrm{(\roman{smallroman})}}}
  {\usecounter{smallroman}\setlength{\itemindent}{0cm}
   \setlength{\leftmargin}{5ex}\setlength{\labelwidth}{4ex}
   \setlength{\topsep}{0.75\parsep}\setlength{\partopsep}{0ex}
   \setlength{\itemsep}{0ex}}}
{\end{list}}
\newcommand{\ben}{\begin{romanenumerate}}  
\newcommand{\een}{\end{romanenumerate}}  
\newtheorem{theoreme}{theorem }[section]
\newtheorem{theorem}[theoreme]{Theorem}
\newtheorem{proposition}[theoreme]{Proposition}
\newtheorem{Lemma}[theoreme]{Lemma}
\newtheorem{corollary}[theoreme]{Corollary}
\newtheorem{remark}{Remark}[section]
\newtheorem{example}[theoreme]{Example}
\newcommand{\Pp}{P^{\perp}}
\newcommand{\tH}{\theta(H)}
\newcommand{\eH}{\eta(H)}
\newcommand{\eD}{\eta(\Delta)}
\newcommand{\pp}{\frac{\partial \tilde{\varphi}}{\partial \overline{z}}(z)}
\newcommand{\A}{(z-A/R)^{-1}}
\newcommand{\JapA}{\Big \langle \frac{A}{R} \Big \rangle }
\newcommand{\const}{\frac{\i}{2\pi}}
\newcommand{\dz}{dz\wedge d\overline{z}}
\newcommand{\dN}{\bm{N}}
\newcommand{\dA}{\bm{A}}
\newcolumntype{L}{>{\centering\arraybackslash}m{3cm}}
\newcommand\nn\nonumber
\renewcommand\leq\varleq
\renewcommand\geq\vargeq
\renewcommand{\proof}{\noindent \emph{Proof. }}
 \newcommand{\R}{\mathbb{R}}
 \newcommand{\N}{\mathbb{N}}
\newcommand{\Z}{\mathbb{Z}} \newcommand{\C}{\mathbb{C}}
\newcommand{\grad}{\nabla}
\renewcommand{\d}{\mathrm{d}}
\renewcommand{\i}{\mathrm{i}}
\renewcommand{\epsilon}{\varepsilon}
\newcolumntype{A}{D{.}{.}{2.3}}
      \def\@setcopyright{}
      \def\serieslogo@{}
\begin{document}

\author{Sylvain Gol\'enia and Marc-Adrien Mandich}
   \address{Univ. Bordeaux, CNRS, Bordeaux INP, IMB, UMR 5251,  F-33400, Talence, France}
   \email{sylvain.golenia@math.u-bordeaux.fr}
      \address{Independent researcher, Jersey City, 07305, NJ, USA}
	\email{marcadrien.mandich@gmail.com}
   

   \title[LAP for discrete Schr\"odinger operator]{Limiting absorption principle for discrete Schr{\"o}dinger operators with a Wigner-von Neumann potential and a slowly decaying potential}

   \begin{abstract}
We consider discrete Schr\"odinger operators on $\Z^d$ for which the perturbation consists of the sum of a long range type potential and a Wigner-von Neumann type potential. Still working in a framework of weighted Mourre theory, we improve the limiting absorption principle (LAP) that was obtained in \cite{Ma1}. To our knowledge, this is a new result even in the one-dimensional case. The improvement is twofold. It weakens the assumptions on the long range potential and provides better LAP weights. Both upgrades include logarithmic terms. The proof relies on the fact that some particular functions, that contain logarithmic terms, are operator monotone. This fact is proved using Loewner's theorem and Nevanlinna functions. 
   \end{abstract}

%
\subjclass[2010]{39A70, 81Q10, 47B25, 47A10.}

   \keywords{limiting absorption principle, discrete Schr\"{o}dinger operator, Wigner-von Neumann potential, Mourre theory, weighted Mourre theory, Loewner's theorem, polylogarithms}
 

\maketitle
\hypersetup{linkbordercolor=black}
\hypersetup{linkcolor=blue}
\hypersetup{citecolor=blue}
\hypersetup{urlcolor=blue}

\section{Introduction}

The limiting absorption principle (LAP) is an important resolvent estimate in the spectral and scattering theory of quantum mechanical Hamiltonians, in particular it implies that the part of the continuous spectrum where it holds is actually purely absolutely continuous. The LAP also provides boundary values for the resolvent that are useful for scattering theory. For Schr\"odinger operators on $\R^d$, the LAP was derived for a large class of short and long range potentials, see e.g.\ \cite{A}, \cite{PSS}, \cite{ABG}, as well as for decaying oscillatory potentials such as the Wigner-von Neumann potential, see e.g.\ \cite{DMR}, \cite{RT1}, \cite{RT2}. Schr\"odinger operators with Wigner-von Neumann potentials \cite{NW} are of interest because when appropriately calibrated they may produce eigenvalues embedded in the absolutely continuous spectrum of the Hamiltonian, and are linked with the phenomenon of resonance. The Wigner-von Neumann type potentials are still very much an active area of research. For the multi-dimensional case, see e.g.\ \cite{FS}, \cite{GJ2}, \cite{J}, \cite{JM}, \cite{Mar1}, \cite{Mar2}, \cite{Mar3} and \cite{Ma1}; for the one-dimensional case, see e.g.\ \cite{Li2}, \cite{L1}, \cite{L2}, \cite{L3}, \cite {Sim}, \cite{JS}, \cite{KN}, \cite{NS} and \cite{KS}. 

Regarding the LAP for Schr\"odinger operators, historically a lot of effort was put into lessening the decay assumptions on the short/long range perturbations. In this regard a breakthrough was made in 1981 by E.\ Mourre, see \cite{Mo1} and \cite{Mo2}. Very roughly speaking, assuming the short range perturbation satisfies $V_{\text{sr}}(x) = O(|x|^{-2})$ and the long range perturbation satisfies $x \cdot \grad V_{\text{lr}}(x) = O(|x|^{-1})$ as $|x| \to +\infty$, as well as several other technicalities, Mourre's theory implied the following LAP for the Schr\"odinger operator $H = -\Delta + V_{\text{sr}} + V_{\text{lr}}$ on $\R^d$ over some open real interval $I$: for every fixed $s>1/2$,
\begin{equation}
\label{general_LAP111}
\sup_{z \in I^{\pm}} \| \langle A \rangle ^{-s} (H-z)^{-1} \langle A \rangle ^{-s} \| < \infty,  \quad I^{\pm} := \{ z \in \C : \mathrm{Re}(z) \in I, \ \pm \mathrm{Im}(z) >0 \}.
\end{equation}
Here $\langle x \rangle := \sqrt{1+|x|^2}$ and $A$ is some self-adjoint operator. The theory was improved and by the end of the $20^{\text{th}}$ century the book \cite{ABG} presented a more sophisticated but optimal abstract framework to treat the short/long range potentials. This framework now goes by the name of \textit{classical Mourre theory}. The Wigner-von Neumann potential is not in the scope of this framework because its decay is at best $O(|x|^{-1})$, see e.g.\ \cite[Proposition 5.4]{GJ2} for the continuous case and \cite[Proposition 4.2]{Ma1} for the discrete case.

The beginning of the $21^{\text{st}}$ century sees the emergence of new approaches to Mourre's commutator method with \cite{G} and \cite{GJ1}, and the so-called \textit{weighted Mourre theory}, see \cite{GJ2}, or the \textit{local Putnam-Lavine theory}, see \cite{J}. The classical theory revolves around the \textit{Mourre estimate}
\begin{equation}
\label{mourreEstimate}
E_I(H) [H, \i A] _{\circ} E_{I} (H) \geqslant  \gamma E_{I} (H) + K,
\end{equation}
where $E_I(H)$ is the spectral projection of $H$ onto the interval $I$, $[H, \i A]_{\circ}$ is the realization of the formal commutator between $H$ and $\i A$, $K$ is a compact operator, and $\gamma >0$. On the other hand the more recent theories are centered around an estimate of the type
\begin{equation}
\label{WeightedmourreEstimate11}
E_I(H) [H, \i \varphi(A)] _{\circ} E_{I} (H) \geqslant E_{I} (H) \langle A \rangle ^{-s} \left( \gamma  +K \right)  \langle A \rangle ^{-s}  E_{I} (H), \quad s>1/2,
\end{equation}
where $\varphi$ is the real-valued function 
\begin{equation}
\label{varphi}
\varphi(t) := \int_{-\infty} ^t \langle x \rangle ^{-2s} dx, \quad t \in \R.
\end{equation}
Key underlying ideas behind this choice are that 1) $\varphi$ is bounded for $s>1/2$ and, 2) $\varphi'(A) = \langle A \rangle ^{-2s}$, i.e.\ the derivative of $\varphi$ yields the appropriate weights for the LAP as in \eqref{general_LAP111}.

In \cite{GJ2} ideas of weighted Mourre theory are used to derive the LAP for the continuous Schr\"odinger operator $H = -\Delta + V_{\text{sr}} + V_{\text{lr}} + W$ on $\R^d$, where $W(x) = w \sin ( k |x|) / |x|$, $w,k \in \R$, is the Wigner-von Neumann potential, and the short/long range potentials respectively satisfy $V_{\text{sr}}(x) = O(|x|^{-1-\epsilon})$ and $x \cdot \grad V_{\text{lr}}(x) = O(|x|^{-\epsilon})$ as $|x| \to +\infty$ for some $\epsilon > 0$. The cost of including the $W$ was that the decay assumptions for $V_{\text{sr}}$ and $V_{\text{lr}}$ are suboptimal as per \cite[Chapter 7]{ABG}. Article \cite{Ma1} was an application of the ideas of \cite{GJ2} to corresponding (albeit non-radial) discrete Schr\"odinger operators on $\Z^d$, see Theorem \ref{lapy} below for a statement. 

This article is a sequel to \cite{Ma1}. The aim is to follow the weighted Mourre theory as in \cite{GJ2} and \cite{Ma1}, but with a new bounded function $\varphi$ in \eqref{WeightedmourreEstimate11} which involves logarithmic terms. To write down this function we need some notation. Denote $\ln(\cdot)$ the Napierian logarithm. Let $\ln_0 (x) := 1$, $\ln_1(x) := \ln(1+ x)$, and for integer $k \geqslant 2$, $\ln_k(x) := \ln \left(1+\ln_{k-1}(x)\right)$. Thus $k$ is the number of times the function $\ln(1+x)$ is composed with itself. For simplicity we choose the domains $\ln_k : [1,+\infty) \to +\infty$. Also denote $\ln_k ^p (x) := (\ln_k  (x) )^p$, $p \in \R$. We choose $\varphi$ to be
\begin{equation}
\label{new_varphi22}
\varphi(t) := \int_{-\infty} ^t \langle x \rangle ^{-1} \ln_{m+1} ^{-2p} (\langle x \rangle) \prod _{k=0} ^m \ln_k ^{-1}(\langle x \rangle) dx , \quad t \in \R,
\end{equation}
for some $m \in \N$ and $p > 1/2$. Note that $\varphi$ as in \eqref{new_varphi22} is an increasing function that is asymptotically equal to a constant minus a $O( \ln_{m+1} ^{1-2p} (t) )$ term for $t \to +\infty$, hence a bounded function.

More notation is needed to present our results. The position space is the lattice $\Z^d$ for integer $d \geqslant 1$. For $n = (n_1,...,n_d) \in \Z^d$, set $|n|^2 := n_1^2 + ...+n_d^2$. Consider the Hilbert space $\mathscr{H} := \ell^2(\Z^d)$. Let $S_i$ be the shift operator to the right on the $i^{\text{th}}$ coordinate :
\begin{equation*}
(S_i \psi)(n) := \psi(n_1,...,n_i-1,...,n_d), \quad \forall \ n \in \Z^d, \ \psi = (\psi(n))_{n \in \Z^d} \ \in \mathscr{H}, \ \text{and} \ 1 \leqslant i \leqslant d.
\label{ShiftOperator}
\end{equation*}  
The shifts to the left on the $i^{\text{th}}$ coordinate are $S_i^* = S_i ^{-1}$. The discrete Schr{\"o}dinger operator 
\begin{equation}
\label{Hamiltonian}
H := \Delta + V + W
\end{equation}
acts on $\mathscr{H}$, where $\Delta$ is the discrete Laplacian operator defined by
\begin{equation}
\Delta : = \sum _{i=1}^d 2 - S_i - S_i^* = \sum _{i=1}^d \Delta_i,
\label{DiscreteLaplacian}
\end{equation} 
$W$ is a Wigner-von Neumann potential, parametrized by $w \in \R$, $k \in (0,2\pi) \setminus \{ \pi\}$ and defined by
\begin{equation}
\label{Wiggy}
(W \psi)(n) := w \cdot \sin(k(n_1+...+n_d)) |n|^{-1} \psi(n), \quad \text{for all} \ n \in \Z^d, n\neq 0, \ \text{and} \ \psi \in \mathscr{H},
\end{equation}
$(W\psi)(0) := 0$, and $V$ is a multiplication operator by a bounded real-valued sequence $(V(n))_{n \in \Z^d}$ such that $(V  \psi)(n) := V(n)\psi(n)$. We will also consider the oscillating potential $W_2$ given by 
\begin{equation}
\label{Osc_wiggy}
(W_2 \psi )(n) := w \cdot (-1)^{n_1+...+n_d} \cdot |n|^{-1} \psi(n), \quad \text{for all} \ n \in \Z^d, n\neq 0, \quad \text{and} \quad (W_2\psi)(0) := 0.
\end{equation}
In this case let $H_2 := \Delta + V + W_2$. Denote $\tau_i V$, $\tau_i^* V$ the operators of multiplication acting by 
\begin{equation*}
(\tau_i V) \psi(n) := V(n_1,...,n_i-1,...n_d)\psi(n), \quad (\tau_i^* V) \psi(n) := V(n_1,...,n_i+1,...n_d)\psi(n).
\end{equation*} 

\noindent \textbf{Hypothesis (H) : } Assume there are $m \in \N$ and $0\leqslant r < q$ such that :
\begin{align}
\label{short_range33}
V(n) &= O \left(\ln_{m+1} ^{-q} (|n|) \prod_{k=0} ^m \ln_{k} ^{-r} (|n|) \right), \quad as \ |n| \to +\infty, \quad \text{and} \\
\label{long_range33}
\ n_i (V - \tau_iV)(n)  &= O \left(\ln_{m+1} ^{-q} (|n|) \prod_{k=0} ^m \ln_{k} ^{-r} (|n|) \right), \quad as  \ |n| \to +\infty, \quad \forall \ 1 \leqslant i \leqslant d.
\end{align}


\begin{remark}
If hypothesis \textbf{(H)} holds for some $m \in \N$ and $0 \leqslant r <q$, then it holds for any integer $m' \geqslant m$ and $0 \leqslant r' < q'$, with $r' \leqslant r$ and $q' \leqslant q$. 
\end{remark}
Under hypothesis \textbf{(H)}, $V(n) = o(1)$ as $|n| \to +\infty$. Thus $V+W$ is a compact perturbation of $\Delta$ and so $\sigma_{\text{ess}}(H) = \sigma (\Delta)$. A Fourier transformation calculation shows that the spectrum of $\Delta$ is purely absolutely continuous and $\sigma (\Delta) = [0,4d]$. The formulation of the LAP requires a conjugate self-adjoint operator. Let $N_i$ be the position operator on the $i^{\text{th}}$ coordinate defined by 
$$
(N_i \psi)(n) := n_i \psi(n), \quad \mathrm{Dom}[N_i] := \bigg\{ \psi \in \ell^2(\Z^d) : \sum_{n \in \Z^d} |n_i \psi(n)|^2 < \infty  \bigg\}.
$$
The conjugate operator to $H$ is the generator of dilations denoted $A$ and it is the closure of
\begin{equation}
\label{generatorDilations}
A_0 := \i\sum_{i=1}^d  2^{-1}(S_i^*+S_i) - (S_i^*-S_i)N_i  = \frac{\i}{2}  \sum_{i=1}^d  (S_i-S_i^*)N_i + N_i(S_i-S_i^*) 
\end{equation} 
with domain 
the compactly supported sequences. $A$ is self-adjoint, see e.g.\ \cite{GGo}. Let 
\begin{equation}
\begin{aligned}
\mu(H) &:= (0,4)\setminus \{E_{\pm}(k)\} \ \ \text{for} \ \ d=1, \\
\mu(H) &:= (0,E(k)) \cup (4d-E(k),4d) \ \ \text{for} \ \ d\geqslant 2. 
\label{MuH}
\end{aligned}
\end{equation}
Here $k$ is the angular frequency of $W$, $E_{\pm}(k) := 2 \pm 2\cos\left(k/2\right)$, $E(k) := 2E_-(k)$ for $k \in (0,\pi)$ and $E(k) := 2E_+(k)$ for $k \in (\pi,2\pi)$. The sets $\mu(H)$ are real numbers where a classical Mourre estimate \eqref{mourreEstimate} is known to hold for $H$ -- assuming the presence of the oscillatory perturbation, i.e.\ $W \neq 0$. The first basic result for $H$ is about the local finiteness of the point spectrum.
\begin{proposition} 
\label{prop111}
Let $d \geqslant 1$, and $V$ satisfy hypothesis \textbf{(H)} for some $m \in \N$ and $0 \leqslant r < q$. If $E \in \mu(H)$, then there is an open interval $I$ of $E$ that contains at most finitely many eigenvalues of $H$ (including multiplicities). The corresponding eigenfunctions, if any, decay sub-exponentially. Also, in dimension $1$ the point spectrum of $H$ is empty in $I$ whenever $I \subset \left(E_-(k), E_+(k) \right)$.
\end{proposition}
  
We have a similar result for $H_2$. Set $\mu(H_2) := [0,4d] \setminus \left( \{0,4,...,4d-4, 4d\} \cup \{2d\} \right)$.

\begin{proposition} 
\label{prop111_H2}
Let $d \geqslant 1$ and $V$ satisfy hypothesis \textbf{(H)} for some $m \in \N$ and $0 \leqslant r < q$. If $E \in \mu(H_2)$, then there is an open interval $I$ of $E$ that contains at most finitely many eigenvalues of $H_2$ (including multiplicities). The corresponding eigenfunctions, if any, decay sub-exponentially. In dimension $1$ the point spectrum of $H_2$ is void in $(0,4) \setminus \{2\}$. 
\end{proposition}

Denote $P^{\perp} := 1-P$, where $P$ is the projection onto the pure point spectral subspace of $H$. We introduce the LAP weights with logarithmic terms :
\begin{equation}
\label{weightsW2}
\mathscr{W}_{M+1} ^{-p} (x) := \langle x \rangle ^{-\frac{1}{2}} w_{M} ^{-p,-\frac{1}{2}} (x), \quad 
w_{M} ^{\alpha,\beta} (x) := \ln_{M+1} ^{\alpha} \left( \langle x \rangle \right) \prod_{k=0}  ^M \ln_{k} ^{\beta} \left( \langle x \rangle \right), \quad \alpha, \beta \in \R.
\end{equation}
Let $J^{\pm} := \{ z \in \C : \mathrm{Re}(z) \in J, \ \pm \mathrm{Im}(z) >0 \}$. The main result of this article is :

\begin{theorem}
\label{lapy3}
Let $d \geqslant 1$ and $V$ satisfy hypothesis \textbf{(H)} for some $m \in \N$ and $2=r <q$. If $E \in \mu(H)$, then there is an open interval $I$ of $E$ such that for any compact interval $J \subset I$, any integer $M \geqslant m$, and any $p>1/2$,
\begin{equation}
\label{LAP_this_article2}
\sup \limits_{z \in J^{\pm}} \Big \| \mathscr{W}_{M+1} ^{-p} \left( A \right) (H-z)^{-1}P^{\perp}  \mathscr{W}_{M+1} ^{-p}   \left(  A  \right) \Big \| < \infty.
\end{equation}
The following local decay estimate also holds for all $\psi \in \mathscr{H}$, $M \geqslant m$ and $p>1/2$ :
\begin{equation}
\label{local_decay2}
\int _{\R} \Big \| \mathscr{W}_{M+1} ^{-p} \left(  A  \right)  e^{-\i tH} P^{\perp} E_{J} (H) \psi \Big \|^2 dt < \infty.
\end{equation}
By Lemma \ref{weightsAtoN_general} the above estimates also hold for $\langle N \rangle  = (1+ |N_1|^2 + ... + |N_d|^2)^{1/2}$ instead of $\langle A \rangle$. Finally, the spectrum of $H$ is purely absolutely continuous on $J$ whenever $P =0$ on $J$. 
\end{theorem}

With the obvious substitutions, the statement and conclusion of Theorem \ref{lapy3} also hold for the Hamiltonian $H_2 = \Delta + V + W_2$.

\begin{remark}
If $W=0$ or $W_2=0$, then the conclusion of Theorem \ref{lapy3}, i.e.\ the existence of a neighborhood where the LAP holds, is valid for a larger set of energies, precisely $E \in [0,4d] \setminus \{0,4,...,4d-4, 4d\}$. 
\end{remark}

\begin{example}
Suppose $V(n) = O \left( |n|^{-1} \ln^{-q} |n| \right)$ as $|n| \to +\infty$ for some $q>2$. Then hypothesis \textbf{(H)} holds with $m=0$ and $r=2$. The LAP in Theorem \ref{lapy3} holds with the weights $\mathscr{W}_{M+1} ^{-p} \left(  A  \right)$ for any $M \geqslant 0$ and $p>1/2$, so in particular for $\mathscr{W}_{1} ^{-p} \left(  A  \right) = \langle A \rangle ^{-\frac{1}{2}} \ln^{-p} (1+\langle A \rangle)$, $p >1/2$.
\end{example}

\begin{example}
An interesting $1$-dimensional example is $H_2 = \Delta + V + W_2$ where $V(n) = 2(-1)^n \cdot (n \ln(n))^{-1}$ and $W_2(n) = (-1)^{n} \cdot n^{-1}$, $n \in \N$. It is due to C.\ Remling \cite{R}. Although the example is on the half-line the arguments presented here apply with minor considerations. So in particular the spectrum of this $H_2$ is purely a.c.\ on $(-2,2) \setminus \{0\}$ (in the notation of \cite{R}). 
\end{example}

For comparison, the statement of the principal result of \cite{Ma1} is added below, taking into account the technical improvement in \cite[Theorem 1.8]{Ma2} : 

\begin{theorem}
\label{lapy}
\cite{Ma1} Let $d \geqslant 1$ and $V$ satisfy $V(n) = O(|n|^{-\epsilon})$ and $n_i (V-\tau_i V)(n) = O(|n|^{-\epsilon})$ for some $\epsilon>0$, and $\forall 1 \leqslant i \leqslant d$. If $E \in \mu(H)$, then there exists an open interval $I$ containing $E$ such that for any compact interval $J \subset I$ and any $s>1/2$, $\sup \limits_{z \in J^{\pm}} \|\langle A \rangle ^{-s} (H-z)^{-1}P^{\perp} \langle A \rangle ^{-s} \| < \infty$.

\end{theorem}
Thus, Theorem \ref{lapy3} improves Theorem \ref{lapy} by weakening the decay assumptions on $V$, as well as improving the LAP weights. The article \cite{Ma1} considers a second Wigner-von Neumann potential, namely $W'(n) = \prod_{i=1} ^d \sin(k_i n_i) /n_i$. Although $W'$ could have been included in this article and similar results would have been derived, we did not do so to keep the size of the article reasonable. Another obvious comment is that we do not know how to effectively use commutator methods to study spectral properties of the discrete Hamiltonian $H$ (with $W \neq 0$) on $[0,4d] \setminus \mu(H)$, see the comments that follow \cite[Proposition 4.5]{Ma1}. For the continuous Schr\"odinger operators on $\R^d$, with $W \neq 0$, a similar limitation exists, see \cite{GJ2} and \cite{JM}, in that the LAP was established only on $(0, k^2/4)$ for $d \geqslant 2$. But interestingly, an argument was recently found to justify a LAP on $(0,+\infty) \setminus \{ k^2/4\}$ when the Wigner-von Neumann potential is radial, i.e.\ $W(x) = q\sin( k|x|) / |x|$, see \cite{J} and \cite[Section 3.5]{Mb}.

Fix $d=1$ to discuss Theorem \ref{lapy3} in relation to other one-dimensional results in the literature. 
In \cite{Sim}, the perturbation consists of a Wigner-von Neumann potential plus a $V \in \ell^1(\Z)$, and it is proved that the spectrum of $H$ is purely absolutely continuous on $(0,4) \setminus \{ E_{\pm}(k) \}$. Note that our assumption \eqref{long_range33}, with $2=r<q$, implies $(V-\tau V) \in \ell^1(\Z)$ but not necessarily $V \in \ell^1(\Z)$, (although the weaker assumption $1 = r < q$ is sufficient to have $(V-\tau V) \in \ell^1(\Z)$). Another recent result is \cite{Li1}, where it is shown that if the perturbation is $O(|n|^{-1})$, then $H$ does not have singular continuous spectrum. Although this criteria applies to the Wigner-von Neumann potential, it does not apply to all potentials $V$ in the scope of this article, since for example hypothesis \textbf{(H)} allows for $V(n) = \ln^{-q}(1+\langle n \rangle)$, $q>2$. Now suppose the absence of oscillations, i.e.\ $W = 0$ and $W_2 = 0$. Because $(V-\tau V) \in \ell^1(\Z)$, $V$ is of bounded variation, and so the spectra of $H$ and $H_2$ are purely a.c.\ on $(0,4)$, by a result due to B.\ Simon in \cite{Si}. 

Consider now $d \geqslant 1$ to discuss Theorem \ref{lapy3} in relation to the framework of Mourre theory as exposed in \cite{ABG}. First, it is important to cite \cite{BSa}, where the details of this framework are worked out for the multi-dimensional discrete Schr\"odinger operators on $\Z^d$. From this perspective, this article is an attempt to bridge the gap between \cite{Ma1} and \cite{BSa}. 

Recall the regularity classes $\mathcal{C}^{1,1}(A) \subset \mathcal{C}^{1}_u(A) \subset \mathcal{C}^{1} (A)$ that structure the classical Mourre theory, see Section \ref{OpReg}. In \cite[Chapter 7]{ABG}, it is shown that $\mathcal{C}^{1,1}(A)$ is an optimal class in that if the Hamiltonian $H \in \mathcal{C}^{1,1}(A)$ then a LAP holds for $H$, whereas there is an example where $H \in \mathcal{C}^{1}_u(A)$ but $H \not \in \mathcal{C}^{1,1}(A)$ and no LAP holds. In \cite[Proposition 4.2]{Ma1} it is proved that the Wigner von-Neumann potential $W$, and thereby $H$, are not even of class $\mathcal{C}^{1}_u(A)$. An analogous argument also gives $W_2, H_2 \notin \mathcal{C}_u^1(A)$, provided $w \neq 0$. Thus our Hamiltonian does not fall under the framework of classical Mourre theory. 

Let us discuss the assumptions on the short/long range perturbations $V_{\text{sr}}$, $V_{\text{lr}}$. The criteria in the classical theory (see \cite[Theorem 2.1]{BSa}, also \cite[Theorem 7.6.8]{ABG}) are :
\begin{equation}
\label{Sahbani_criterionSR}
\int_{1}^{\infty} \sup \limits_{\kappa < |n| < 2\kappa} |V_{\text{sr}}(n)| d\kappa < \infty, \quad \forall \ 1 \leqslant i \leqslant d,
\end{equation}
and
\begin{equation}
\label{Sahbani_criterionLR}
V_{\text{lr}}(n) = o(1) \ as \ |n| \to +\infty \quad \text{and} \quad \int_{1}^{\infty} \sup \limits_{\kappa < |n| < 2\kappa} |(V_{\text{lr}}-\tau_i V_{\text{lr}})(n)| d\kappa < \infty, \quad \forall \ 1 \leqslant i \leqslant d.
\end{equation}
Table \ref{tab:2} contrasts these criteria with our hypothesis in Theorem \ref{lapy3}. In this Table $g(n) := O(|n|^{-1} w_m^{-q,-r}(n))$ as $|n| \to +\infty$.

\begin{table}[ht]
\centering
\begin{tabular}{!{\vrule width 1.0pt}c|c|c|c!{\vrule width 1.0pt}}
\ChangeRT{1.0pt}
\footnotesize If $V$ satsifies & $V(n) = O(g(n))$  &   $(V-\tau_i V)(n) = O(g(n))$, $\forall i$  & $V(n) = \sum_{i=1}^d \frac{\ln( 2+|n_i|)}{\ln ^{\sigma}(1+\langle n \rangle)}$  \\ [0.3ex]
\ChangeRT{1.0pt} 
\footnotesize then : \eqref{short_range33} holds & $ yes, \forall m \in \N$ & $no$ & $yes, \forall \sigma >3$  \\ [0.3ex]
\footnotesize with $2=r < q$ &  &   &   \\ [0.3ex]  \hline
\footnotesize then : \eqref{long_range33} holds & $yes, \forall m \in \N$ & $yes, \forall m \in \N$  & $yes, \forall \sigma>2$  \\ [0.3ex]
\footnotesize with $2=r < q$ &  &   &   \\ [0.3ex]  \hline
\footnotesize then : \eqref{Sahbani_criterionSR} holds & $yes, \forall m \in \N$ & $no$ & $no, \forall d \geq 1$ \\  [0.3em]
\footnotesize & $and  \ 1=r<q$ &   &   \\ [0.3ex]  \hline
\footnotesize then : \eqref{Sahbani_criterionLR} holds  & $yes, \forall m \in \N$ & $yes, \forall m \in \N$ & $yes, for \  d=1$, $\sigma >1$;  \\  [0.3ex]  
\footnotesize & $and  \ 1=r<q$ & $and  \ 1=r<q$  & $no, \forall d \geq 2$  \\ [0.3ex]  \hline
\ChangeRT{1.0pt}
\end{tabular}
\caption{Examples.} 
\label{tab:2}
\end{table}

The last column in Table \ref{tab:2} shows that it is possible to concoct a non-radial potential $V$ that verifies the hypothesis of Theorem \ref{lapy3} but neither \eqref{Sahbani_criterionSR} nor \eqref{Sahbani_criterionLR}. It is an open problem for us to verify if this potential is of class $\mathcal{C}^{1,1}(A)$. If the potential $V$ is radial the first two lines of Table \ref{tab:2} suggest that our assumptions on $V$ may be slightly suboptimal, at least in the absence of oscillations, i.e.\ $W=0$, because Theorem \ref{lapy3} requires $r=2$, rather than $r=1$. To the best of our understanding, our requirement of "an entire extra logarithm" ($r=1$ vs.\ $r=2$) is due to a technical limitation in the construction of the almost analytical extension of the function $\varphi$, see \eqref{new_varphi22}, which is employed to apply the Helffer-Sj\"ostrand formula and associated functional calculus. Indeed, although $\varphi'(t) = \langle t \rangle^{-1} w_m ^{-2p,-1}(t)$ the extension only verifies $ | \partial \tilde{\varphi} / \partial \overline{z} | \leqslant c \langle \text{Re} (z) \rangle ^{-1-\ell} | \text{Im}(z) | ^{\ell}, \ell \in \N$. Clearly all logarithmic decay is lost in the process of extending to the complex plane.

Let us briefly discuss the LAP weights. Let $T$ be a self-adjoint operator on $\mathscr{H}$, $E_\Sigma (T)$ its spectral projection on a set $\Sigma \subset \R$. Let $s, p, p' \in \R$, $M \in \N$. Let $\Sigma_j = \{x \in \R : 2^{j-1} \leqslant |x| \leqslant 2^j \}$, $j \geqslant 1$, and $\Sigma_0 := \{ x\in \R : |x| \leqslant 1\}$. Define the Banach spaces  

\begin{equation*}
L^2_{s, p, p', M}(T) := \Big \{ \psi \in \mathscr{H} : \| \langle T \rangle ^{s} w_M^{p,p'}(T)  \psi \| < \infty \Big \}, \quad \text{and}
\end{equation*}
\begin{equation*}
B (T) := \Big \{ \psi \in \mathscr{H}  : \sum_{j=0} ^{\infty} \sqrt{2^j} \| E_{\Sigma_j} (T) \psi \|  < \infty \Big \}.
\end{equation*}
The norms on these spaces are respectively
$$ \| \psi \|_{L^2_{s,p,p',M} (T)} := \| \langle T \rangle ^{s} w_M^{p,p'}(T)  \psi \| \quad \text{and} \quad \| \psi \|_{B(T)} := \sum_{j=0} ^{\infty} \sqrt{2^j} \| E_{\Sigma_j} (T) \psi \|.$$
The dual of $L^2_{s, p, p', M}(T)$ with respect to the inner product on $\mathscr{H}$ is $( L^2_{s, p, p', M}(T) )^* = L^2_{-s, -p, -p', M}(T)$ and 
the dual $B^*(T)$ of $B(T)$ is the Banach space obtained by completing $\mathscr{H}$ in the norm 
$$ \| \psi \|_{B^*(T)} := \sup_{j \in \N} \sqrt{2^{-j}} \| E_{\Sigma_j} (T) \psi \|.$$
We refer to \cite{JP} and the references therein for these definitions. Write $L^2_{s}(T) := L^2_{s, 0, 0, 0}(T)$. For any $s, p>1/2$ and $M \in \N$, the following strict inclusions hold : 
$$L^2_s(T) \subsetneq L^2_{1/2, p,1/2, M}(T) \subsetneq B(T)  \subsetneq L^2_{1/2}(T),$$
and 
$$L^2_{-1/2}(T) \subsetneq B^*(T)  \subsetneq L^2_{-1/2, -p,-1/2, M}(T) \subsetneq L^2_{-s}(T).$$
Also clear is that $L^2_{1/2, p,1/2, M}(T) \subset L^2_{1/2, p',1/2, M'}(T)$ whenever $p' \leqslant p$ and $M' \geqslant M$.

Let $\mathscr{E}(H)$ be the set of eigenvalues and thresholds of $H$. By threshold we mean a real number $E$ for which no Mourre estimate holds for $H$ wrt.\ $A$, regardless of the interval $I$, $I \ni E$. 
Classical Mourre theory says that under appropriate conditions for $V$, and $W=0$, then the LAP
\begin{equation}
\label{optimal}
\sup \limits_{\eta > 0} \| (H-\lambda - \i \eta)^{-1} \psi \|_{\mathcal{K}^*} \leqslant c(\lambda) \| \psi \|_{\mathcal{K}}
\end{equation}
holds for some appropriate pair of Banach spaces $(\mathcal{K}, \mathcal{K}^*)$, some $c(\lambda) > 0$ and all $\lambda \in \R \setminus \mathscr{E}(H)$. Also, $c(\lambda)$ can be chosen uniform in $\lambda$ over fixed compact subsets of $\R \setminus \mathscr{E}(H)$. On the one hand, the spaces $(\mathcal{K}, \mathcal{K}^*) = (B(A),B^*(A))$ are optimal in a certain sense, see \cite{JP}, \cite{AH} and the discussion at the beginning of \cite[Chapter 7]{ABG}, but these are not Besov spaces. On the other hand, the Besov spaces $(\mathcal{K}, \mathcal{K}^*) = (\mathcal{H}_{1/2,1}, \mathcal{H}_{-1/2,\infty})$ which appear in \cite{ABG} and \cite{BSa} are not as optimal as $(B(A),B^*(A))$, but are optimal in the scale of Besov spaces and allow for a larger class of potentials $V$. Note that our new result Theorem \ref{lapy3} implies \eqref{optimal} for $\mathcal{K} = L^2_{1/2, p,1/2, M}(A)$, any $p >1/2$, $M \geqslant m$, while Theorem \ref{lapy} implies \eqref{optimal} for $\mathcal{K} = L^2_{s}(A)$, any $s >1/2$.

Finally, a few comments about the proof of Theorem \ref{lapy3}. In essence it is the same as in \cite{GJ2}, or \cite{Ma1}. The difference lies in the function $\varphi$ given by \eqref{new_varphi22} rather than \eqref{varphi}. 
But in order to formulate meaningful conditions on $V$ (hypothesis \textbf{(H)}) this translates into a problem of bounding functions of the generator of dilations $A$ by functions of the operator of position $|N|$ involving logarithmic terms. More specifically, while the proof of Theorem \ref{lapy} required only $\langle A \rangle ^{\epsilon} \langle N \rangle ^{-\epsilon} \in \mathscr{B}(\mathscr{H})$, the space of bounded operators on $\mathscr{H}$, the proof of Theorem \ref{lapy3} requires 
$w_{M} ^{\alpha,\beta} (A) \cdot w_{M} ^{-\alpha,-\beta} (N) \in \mathscr{B}(\mathscr{H})$ for some appropriate $\alpha, \beta \geqslant 0$, which is equivalent to $w_{M} ^{2\alpha,2\beta} (A) \leqslant w_{M} ^{-2\alpha,-2\beta} (N)$
in the sense of forms. To achieve this we make a detour via the polylogarithm functions $\mathrm{Li}_{\sigma}(z)$ (specifically the ones of order $2 < \sigma \leqslant 3$ are used). The reason for doing this is that higher powers of the logarithm are not Nevanlinna functions, but the functions 
$\mathrm{Li}_{\sigma}(z)$, $\text{Re}(\sigma) >0$, are Nevanlinna functions which continue analytically across $(-1,+\infty)$ from $\C_+$ to $\C_-$, and are thus operator monotone functions, see Section \ref{Polylog}.

The plan of the article is as follows. In Section \ref{classicalMourreTheory}, we recall definitions and results of classical and weighted Mourre theory. In Section \ref{VerifyMourre} we derive the classical Mourre estimate for the Hamiltonians $H$ and $H_2$ and prove Propositions \ref{prop111} and \ref{prop111_H2}. In Section \ref{section4normbounds} we derive operator norm bounds involving logarithms of self-adjoint operators. In Section \ref{lemmaSection} we prove key Lemmas that are needed for the proof of Theorem \ref{lapy3}. In Section \ref{PROOF} we prove Theorem \ref{lapy3}. 
In appendix~A (Section~\ref{Nevanlinna}) we review Loewner's theorem which makes the connection between Nevanlinna functions and operator monotone functions, and prove an extension of Loewner's theorem that is suitable for semi-bounded self-adjoint operators. 
In appendix~B (Section \ref{Polylog}) we briefly review polylogarithms and explain that the ones of positive order are Nevanlinna functions. 
In appendix~C (Section \ref{Helffer-Appendix}) we mention the basic tools we use from the kit of almost analytic extensions and Helffer-Sj\"ostrand functional calculus.

\noindent{\textbf{Acknowledgements :}} It is a pleasure to thank Thierry Jecko for fruitful conversations on the topic, generous advice to improve the results in various ways, and especially giving us the permission to publish his Proposition \ref{Proposition ThierryJecko} which conveniently generalizes self-adjoint operator norm estimates to sub-multiplicative functions. We are grateful to the two anonymous referees for a careful reading of the manuscript leading to many improvements, and especially Proposition \ref{Proposition ThierryJecko}. 







\section{Basics of the abstract classical and weighted Mourre theories}
\label{classicalMourreTheory}

\subsection{Operator Regularity}
\label{OpReg}

We consider two self-adjoint operators $T$ and $A$ acting in some complex Hilbert space $\mathcal{H}$, and for the purpose of this brief overview $T$ will be bounded. Given $k \in \N$, we say that $T$ is of class $\mathcal{C}^k(A)$, and write $T \in \mathcal{C}^k(A)$, if the map  
\begin{equation}
\label{defCk}
\R \ni t \mapsto e^{\i tA}Te^{-\i tA} \in \mathscr{B}(\mathcal{H})
\end{equation} 
has the usual $C^k(\R)$ regularity with $\mathscr{B}(\mathcal{H})$ endowed with the strong operator topology. The form $[T,A]$ is defined on $\text{Dom}[A] \times\text{Dom}[A]$ by $\langle \psi , [T,A] \phi \rangle := \langle T\psi,A \phi \rangle - \langle A\psi , T\phi \rangle$. Recall the following convenient result:
\begin{proposition} 
\label{Prop629}
\cite[Lemma 6.2.9]{ABG} Let $T \in \mathscr{B}(\mathcal{H})$. The following are equivalent:
\begin{enumerate}
\item $T \in \mathcal{C}^1(A)$.
\item The form $[T,A]$ extends to a bounded form on $\mathcal{H}\times \mathcal{H}$ defining a bounded operator denoted by $\text{ad}^1_A(T) := [T,A]_{\circ}$.
\item $T$ preserves $\text{Dom}[A]$ and the operator $TA-AT$, defined on $\text{Dom}[A]$, extends to a bounded operator.
\end{enumerate}
\end{proposition}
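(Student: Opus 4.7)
The plan is to prove the three implications $(1) \Rightarrow (2) \Rightarrow (3) \Rightarrow (1)$, all organised around the strong derivative at zero of the conjugated family $W(t) := e^{\i tA}Te^{-\i tA}$.

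For $(1) \Rightarrow (2)$, strong $C^1$-regularity of $W(\cdot)$ provides a bounded operator $W'(0) \in \mathscr{B}(\mathcal{H})$, boundedness being automatic from the uniform boundedness principle applied to the difference quotients. For $\psi, \phi \in \text{Dom}[A]$ I differentiate $\langle \psi, W(t) \phi \rangle$ at $t=0$, using the generator $-\i A$ of $e^{-\i tA}$ on $\phi$ and $\i A$ of $e^{\i tA}$ on $\psi$ (both legitimate since the vectors lie in $\text{Dom}[A]$), to obtain $\langle \psi, W'(0) \phi \rangle = \i \langle \psi, [T,A] \phi \rangle$. Thus $[T,A]$ coincides on $\text{Dom}[A] \times \text{Dom}[A]$ with the bounded form associated with $-\i W'(0)$, and hence extends to it.

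For $(2) \Rightarrow (3)$, let $B$ be the bounded operator representing the extended form. Rearranging the defining identity and moving bounded operators across the inner product, for each fixed $\psi \in \text{Dom}[A]$ I obtain $\langle T^*\psi, A\phi\rangle = \langle T^*A\psi + B^*\psi, \phi\rangle$ valid for all $\phi \in \text{Dom}[A]$. Boundedness of $\phi \mapsto \langle T^*\psi, A\phi\rangle$, combined with the self-adjointness identity $\text{Dom}[A^*] = \text{Dom}[A]$, yields $T^*\psi \in \text{Dom}[A]$ with $AT^*\psi = T^*A\psi + B^*\psi$. A short computation with complex conjugation shows that $[T^*,A]$ extends to $-B^*$, so the same argument applied to $T^*$ in place of $T$ delivers the invariance $T(\text{Dom}[A]) \subseteq \text{Dom}[A]$ together with $TA - AT = B$ on $\text{Dom}[A]$. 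This is the most substantive step, since it promotes form-level boundedness to a genuine domain inclusion; the self-adjointness of $A$ is the decisive ingredient.

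For $(3) \Rightarrow (1)$, the invariance $T(\text{Dom}[A]) \subseteq \text{Dom}[A]$ together with the fact that $e^{-\i tA}$ preserves $\text{Dom}[A]$ renders rigorous the formal computation
\[
\frac{d}{dt} W(t)\phi = e^{\i tA}\bigl(\i AT - \i TA\bigr) e^{-\i tA} \phi = -\i\, e^{\i tA}\, \mathrm{ad}^1_A(T)\, e^{-\i tA} \phi, \qquad \phi \in \text{Dom}[A].
\]
The right-hand side is strongly continuous in $t$ and bounded uniformly by $\|\mathrm{ad}^1_A(T)\|\,\|\phi\|$; a density argument then extends strong differentiability of $W(\cdot)\phi$ from $\text{Dom}[A]$ to all of $\mathcal{H}$, which is exactly the $\mathcal{C}^1(A)$ condition.
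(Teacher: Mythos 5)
The paper does not prove this proposition; it cites it directly from \cite[Lemma~6.2.9]{ABG}, so there is no in-paper proof to compare against. Your argument is essentially the standard one found in \cite{ABG} and is correct: the differentiation of $\langle e^{-\i tA}\psi, Te^{-\i tA}\phi\rangle$ for $\psi,\phi\in\mathrm{Dom}[A]$ for $(1)\Rightarrow(2)$, the duality argument using $\mathrm{Dom}[A^*]=\mathrm{Dom}[A]$ and the pass to $T^*$ for $(2)\Rightarrow(3)$, and the product-rule computation followed by a density/uniform-boundedness extension for $(3)\Rightarrow(1)$ are exactly the right ingredients.

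Two small remarks. First, there is a sign slip in $(1)\Rightarrow(2)$: with the paper's convention $\langle\psi,[T,A]\phi\rangle = \langle T\psi,A\phi\rangle - \langle A\psi,T\phi\rangle$ and $W(t)=e^{\i tA}Te^{-\i tA}$, one finds $\langle\psi,W'(0)\phi\rangle = -\i\,\langle\psi,[T,A]\phi\rangle$, i.e.\ $[T,A]_\circ = \i\,W'(0)$, not $-\i\,W'(0)$; this is harmless for boundedness but worth getting right. Second, when you apply the $(2)\Rightarrow(3)$ argument to $T^*$ you should explicitly record that the hypothesis you are re-using --- that $[T^*,A]$ extends to the bounded operator $-B^*$ --- follows from the identity $\overline{\langle\phi,[T,A]\psi\rangle}=-\langle\psi,[T^*,A]\phi\rangle$ on $\mathrm{Dom}[A]\times\mathrm{Dom}[A]$; you gesture at this ("a short computation with complex conjugation") and it is indeed short, but it is the hinge that lets you pass from "$T^*$ preserves $\mathrm{Dom}[A]$" to "$T$ preserves $\mathrm{Dom}[A]$", so it deserves to be written out.
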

Consequently, $T \in \mathcal{C}^k(A)$ if and only if the iterated commutators $\text{ad}^p_A(T) := [\text{ad}^{p-1}_A(T), A]_{\circ}$ are bounded for $1 \leqslant p \leqslant k$. We say that $T \in \mathcal{C}^{k}_u(A)$ if the map \eqref{defCk} has the $C^k(\R)$ regularity with $\mathscr{B}(\mathcal{H})$ endowed with the norm operator topology. We say that $T \in \mathcal{C}^{1,1}(A)$ if 
\begin{equation*}
\int_0 ^1 \| [T, e^{\i tA} ]_{\circ} , e^{\i tA}]_{\circ} \| t^{-2} dt < \infty.
\end{equation*}
It turns out that $\mathcal{C}^2(A) \subset \mathcal{C}^{1,1}(A) \subset \mathcal{C}^{1}_u (A) \subset \mathcal{C}^1(A)$.

\subsection{The Mourre estimate}
\label{Mestimate_section}
Let $I$ be an open interval and assume $T \in \mathcal{C}^1(A)$. We say that the \textit{Mourre estimate} holds for $T$ on $I$ if there is $\gamma > 0$ and a compact operator $K$ such that 
\begin{equation}
E_{I}(T) [T,\i A ]_{\circ} E_{I}(T)  \geqslant \gamma  E_{I}(T) +K
\label{Mourre K}
\end{equation}
in the form sense on $\text{Dom}[A] \times\text{Dom}[A]$. We say that the \textit{strict Mourre estimate} holds for $T$ on $I$ if \eqref{Mourre K} holds with $K=0$. Assuming the estimate holds over $I$, $T$ has at most finitely many eigenvalues in $I$, and they are of finite multiplicity, while if the strict estimate holds $T$ has no eigenvalues in $I$. This is a direct consequence of the Virial Theorem, see \cite[Proposition 7.2.10]{ABG}. Let $I(E;\epsilon)$ be the open interval of radius $\epsilon>0$ centered at $E \in \R$. One considers the function $\varrho_{T}^A :\R \mapsto \overline{\R}$: 
\begin{equation*}
\varrho_{T}^A : E  \mapsto \sup \ \big\{ \gamma \in \R: \exists \epsilon>0 \  \text{such that} \  E_{I(E;\epsilon)}(T)[T,\i A]_{\circ} E_{I(E;\epsilon)}(T) \geqslant \gamma \cdot E_{I(E;\epsilon)}(T) \big\}.
\label{VARRHO}
\end{equation*}
It is known for example that $\varrho_{T}^A$ is lower semicontinuous and $\varrho_{T}^A(E) < \infty$ if and only if $E \in \sigma(T)$. For more properties of this function, see \cite[chapter 7]{ABG}. 

\subsection{The weighted Mourre estimate \& LAP}
\label{weightedMestimate}

Let $T$ be self-adjoint in a Hilbert space $\mathcal{H}$. Denote $P^{\perp} := 1-P$, where $P$ is the projection onto the pure point spectral subspace of $T$.  In \cite{GJ2} it is explained in a more general setting that a \emph{projected weighted estimate} of the form
\begin{equation}
\label{general_weighted}
\Pp E_I(T) [T, \i B]_{\circ} E_I (T) \Pp \geqslant c \Pp E_I (T) C^2 E_I (T) \Pp,
\end{equation}
where $c > 0$, $B,C$ are any linear operators satisfying $C B C^{-1} \in \mathscr{B}(\mathcal{H})$, and $C$ self-adjoint and injective, will imply a LAP of the form
\begin{equation}
\label{general_LAP}
\sup \limits_{z \in I^{\pm}} \| C (T-z)^{-1} \Pp C \| < \infty.
\end{equation}
The proof is by contradiction and therefore does not provide a description of the continuity properties of the LAP resolvent. Nonetheless, the resolvent estimate \eqref{general_LAP} implies the absence of s.c.\ spectrum for $T$ in $I$, and if $B$ is $T$-bounded then \eqref{general_weighted} also gives the time decay estimate 
\begin{equation}
\label{time_Decay}
\int_{\R} \| C e^{\i t T }E_I (T) P^{\perp} \psi \|^2 dt < \infty, \quad \psi \in \mathcal{H}.
\end{equation}


\section{Verifying the classical Mourre estimate}
\label{VerifyMourre}
In this Section we derive the Mourre estimate described in the previous Section for $T=H$ given by \eqref{Hamiltonian}. But first we must discuss operator regularity. 
\subsection{Verifying the operator regularity} We use freely Proposition \ref{Prop629}. Let
\begin{equation}
\label{thresholdsDelta}
\boldsymbol{\Theta}(\Delta) := \{ \boldsymbol{t}_0, \boldsymbol{t}_1, ..., \boldsymbol{t}_d\}, \quad \text{where} \quad \boldsymbol{t}_k = 4k, k=0, 1, ...,d.
\end{equation}
Recall that our choice of conjugate operator $A$ is the closure of \eqref{generatorDilations}. This choice is justified by the fact that the commutator between $\Delta$ and $A$ is :
\begin{equation}
\label{commutator}
[\Delta,\i A]_{\circ} = \sum_{k=1}^d \Delta_k(4-\Delta_k).
\end{equation} 
In particular $\Delta \in \mathcal{C}^1(A)$, and since $\sigma(\Delta_k) = [0,4]$, $\forall k=1, ...,d$, the strict Mourre estimate holds for $\Delta$ and $A$ over any interval $I \Subset \sigma(\Delta) \setminus \boldsymbol{\Theta}(\Delta) = [0,4d] \setminus \boldsymbol{\Theta}(\Delta)$. For this reason, we refer to $\boldsymbol{\Theta}(\Delta)$ as the \textit{thresholds} of $\Delta$ (they are also thresholds for $H$ and $H_2$). In fact, one can be more specific and prove that if $E \in \sigma_k(\Delta) := [\boldsymbol{t}_{k-1}, \boldsymbol{t}_{k}]$, for some $k=1,...,d$, then

$$\varrho_{\Delta}^A(E) = -(E-\boldsymbol{t}_{k-1})(E-\boldsymbol{t}_{k}).$$
This can be proved by induction on $d$ and using the nifty result \cite[Theorem 8.3.6]{ABG}. One can further show that for all $k=1,...,d$, $[\Delta, \i A]_{\circ}$ decomposes into the sum of a non-negative operator and a non-negative remainder $b_k(\Delta)$, namely
\begin{equation*}
\label{b_k}
[\Delta, \i A]_{\circ} = -(\Delta-\boldsymbol{t}_{k-1})(\Delta-\boldsymbol{t}_{k}) + b_k(\Delta),  
\end{equation*}
where 
$$b_k(\Delta) := - 8(k-1)\Delta + 16k(k-1) + \sum_{\substack{1 \leqslant i,j \leqslant d \\ i \neq j}} \Delta_i \Delta_j.$$
A somewhat surprising direct calculation based on functional calculus may show that no strict positivity can be extracted from $b_k(\Delta)$ by localizing in energy. Finally, it is not hard to prove that $\Delta \in \mathcal{C}^2(A)$, or $\mathcal{C}^{\infty}(A)$ for that matter. Moving on to the commutator between the potential $V$ and $A$, we have:
\begin{equation}
\label{commutatorV}
[V,\i A]_{\circ} = \sum_{i=1}^d (2^{-1}-N_i)(V-\tau_iV)S_i + (2^{-1}+N_i)(V-\tau_i^*V)S_i^*.
\end{equation}
It is readily seen that the assumption \eqref{long_range33}, for some $m\in \N$, $0 \leqslant r < q$, implies that $[V, \i A]_{\circ}$ is a compact operator.  In particular $V \in \mathcal{C}^1(A)$. Regarding the commutator between the Wigner-von Neumann potential $W$ and $A$, we have $[W, \i A]_{\circ} = K_W + B_W$ where
\begin{align}
\label{KW}
K_W &:= 2^{-1} W \sum _{i=1} ^d (S_i^* +S_i) + 2^{-1} \sum _{i=1} ^d (S_i^* +S_i) W, \\
\label{BW}
B_W &:= \sum _{i=1}^d U_i \tilde{W} (S_i^*-S_i) - \sum _{i=1}^d (S_i^*-S_i) \tilde{W} U_i,
\end{align}
$\tilde{W}$ is the operator $(\tilde{W}\psi)(n) := w \cdot \sin(k(n_1+...+n_d))\psi(n)$ and $U_i$ is the operator $(U_i \psi)(n) := n_i |n|^{-1} \psi(n)$, for $n\neq 0$, $(U_i \psi)(0) := 0$. Note that $K_W$ is compact and $B_W$ is bounded. Thus $W \in \mathcal{C}^1(A)$. However, $W \notin \mathcal{C}_u^1(A)$, see \cite[Proposition 4.2]{Ma1}. Finally, regarding the commutator between the oscillating potential $W_2$ and $A$, we have $[W_2, \i A]_{\circ} = K_{W_2} + B_{W_2}$ where
\begin{align}
\label{KW22}
K_{W_2} &:= 2^{-1} W_2 \sum _{i=1} ^d (S_i^* +S_i) + 2^{-1} \sum _{i=1} ^d (S_i^* +S_i) W_2, \\
\label{BW22}
B_{W_2} &:= \sum _{i=1}^d U_i \sigma (S_i^*-S_i) - \sum _{i=1}^d (S_i^*-S_i) \sigma U_i,
\end{align}
$\sigma$ is the operator $(\sigma \psi)(n) := w \cdot (-1)^{n_1+...+n_d} \psi(n)$. Again, $K_{W_2}$ is compact and $B_{W_2}$ is bounded. Thus $W_2 \in \mathcal{C}^1(A)$. An analogous proof to that given in \cite[Proposition 4.2]{Ma1} and \cite[Proposition 3.3]{Ma1} reveals that $W_2 \notin \mathcal{C}_u^1(A)$. 

\subsection{Establishing the Mourre estimate for $H$ wrt.\ $A$}

The following Lemma applies to the one-dimensional Laplacian ($d=1$).
\begin{Lemma} \cite[Lemma 3.4]{Ma1}
\label{ImportantLemma}
Recall that $E_{\pm}(k) := 2 \pm 2\cos (k/2)$. Let $E \in [0,4]\setminus\{E_{\pm}(k) \}$. Then there exists $\epsilon = \epsilon(E) >0$ such that for all $\theta \in C_c^{\infty}(\R)$ supported on $I=(E-\epsilon, E+\epsilon)$, $\theta(\Delta)\tilde{W}\theta(\Delta)=0$. Thus $\theta(\Delta) B_{W} \theta(\Delta)$ is a compact operator.
\end{Lemma}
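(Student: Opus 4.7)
The strategy is to work in Fourier coordinates. Under the discrete Fourier transform $\F\colon\ell^2(\Z)\to L^2(\R/2\pi\Z)$, the Laplacian $\Delta$ becomes multiplication by $\lambda(\xi):=2-2\cos\xi$, and writing $\tilde W=\tfrac{w}{2\i}(e^{\i kN}-e^{-\i kN})$ shows that $\tilde W$ is conjugate to $\tfrac{w}{2\i}(T_{-k}-T_{k})$, where $(T_a\phi)(\xi):=\phi(\xi-a)$ denotes translation on the torus. The first step is to rewrite
$$\F\,\theta(\Delta)\tilde W\theta(\Delta)\,\F^{-1}=\tfrac{w}{2\i}\Bigl[M_{\theta\circ\lambda}T_{-k}M_{\theta\circ\lambda}-M_{\theta\circ\lambda}T_{k}M_{\theta\circ\lambda}\Bigr]$$
and observe that this operator vanishes as soon as the support of $\theta\circ\lambda$ on $\R/2\pi\Z$ is disjoint from its $\pm k$-translates, since the resulting kernel splits as $(\theta\circ\lambda)(\xi)(\theta\circ\lambda)(\xi\pm k)$.

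The second step is the support analysis. Since $\lambda^{-1}(E)=\{\pm\xi_E\}$ in $[-\pi,\pi)$ with $\xi_E:=\arccos(1-E/2)\in[0,\pi]$, the support of $\theta\circ\lambda$ can be made arbitrarily close to $\{\pm\xi_E\}$ by shrinking $\epsilon$. Disjointness with its $\pm k$-translates in $\R/2\pi\Z$ fails only when $k\equiv\pm 2\xi_E\pmod{2\pi}$: the substitution $\xi_E=k/2$ gives $E=2-2\cos(k/2)=E_-(k)$, whereas $\xi_E=\pi-k/2$ gives $E=2+2\cos(k/2)=E_+(k)$; the case $k\equiv 0\pmod{2\pi}$ is excluded by $k\in(0,2\pi)\setminus\{\pi\}$. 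Hence if $E\notin\{E_\pm(k)\}$, choosing $\epsilon=\epsilon(E)>0$ small enough yields the disjointness, and thus $\theta(\Delta)\tilde W\theta(\Delta)=0$.

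For the compactness of $\theta(\Delta)B_W\theta(\Delta)$, the plan is to reduce everything to the compactness of $[\theta(\Delta),U]$. Since $S^*-S$ commutes with $\Delta$ (immediate in Fourier coordinates) and hence with $\theta(\Delta)$, combining the identity $\theta(\Delta)U=U\theta(\Delta)+[\theta(\Delta),U]$ with the just-proved vanishing, the expression \eqref{BW} simplifies to
$$\theta(\Delta)B_W\theta(\Delta)=[\theta(\Delta),U]\,\tilde W\,\theta(\Delta)(S^*-S)+(S^*-S)\theta(\Delta)\,\tilde W\,[\theta(\Delta),U].$$
A direct position-space calculation then gives $[\Delta,U]\psi(n)=(\operatorname{sign}(n-1)-\operatorname{sign}(n))\psi(n-1)+(\operatorname{sign}(n+1)-\operatorname{sign}(n))\psi(n+1)$, whose coefficients are nonzero only for $n\in\{-1,0,1\}$, so $[\Delta,U]$ has finite rank. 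Applying the Helffer-Sj\"ostrand formula (Section \ref{Helffer-Appendix}) to an almost analytic extension $\tilde\theta$ of $\theta$ then yields
$$[\theta(\Delta),U]=\frac{\i}{2\pi}\int_{\C}\overline{\partial}\tilde\theta(z)\,(\Delta-z)^{-1}[U,\Delta](\Delta-z)^{-1}\,dz\wedge d\bar z,$$
a norm-convergent integral of finite-rank operators, hence compact. The anticipated main obstacle is precisely this compactness step, since $U=\operatorname{sign}(N)$ itself does not decay at infinity; the decisive observation is that $[\Delta,U]$ is nevertheless finite-rank, a feature that survives the smooth functional calculus through the Helffer-Sj\"ostrand representation.
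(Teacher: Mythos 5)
Your proof is correct and follows the natural route that the cited reference takes: diagonalize $\Delta$ by the Fourier transform, observe that $\tilde W$ becomes $\tfrac{w}{2\i}(T_{-k}-T_k)$, and show that for small enough $\epsilon$ the support of $\theta\circ\lambda$ is disjoint from its $\pm k$-translates precisely when $E\notin\{E_\pm(k)\}$; then reduce $\theta(\Delta)B_W\theta(\Delta)$ to terms containing $[\theta(\Delta),U]$ using the vanishing just proved and commutativity of $S^*-S$ with $\theta(\Delta)$, and conclude compactness via the Helffer--Sj\"ostrand formula together with the finite rank (hence compactness) of $[\Delta,U]$. All the intermediate steps check out, including the algebraic identity reducing $\theta(\Delta)B_W\theta(\Delta)$ to commutator terms and the observation that $[\Delta,U]$ is supported only on $n\in\{-1,0,1\}$.
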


The following Lemma applies to the multi-dimensional Laplacian ($d\geqslant 2$).
\begin{proposition} \cite[Proposition 4.5]{Ma1}
\label{prop45}
Let 
\begin{equation}
\label{Lambdas}
    E(k) := \begin{cases}
    4-4\cos(k/2) & \ \text{for} \ k \in (0,\pi) \\
    4+4\cos(k/2) & \ \text{for} \ k \in (\pi,2\pi)
    \end{cases} \quad \text{and} \quad \mu(H) := (0,E(k)) \cup (4d-E(k),4d). 
\end{equation}
For each $E \in \mu(H)$ there exists $\epsilon = \epsilon(E) > 0$ such that for all $\theta \in C_c^{\infty}(\R)$ supported on $I := (E-\epsilon, E+\epsilon)$, $\theta(\Delta) \tilde{W} \theta(\Delta) = 0$. In particular, $\theta(\Delta) B_W \theta(\Delta)$ is a compact operator.
\end{proposition}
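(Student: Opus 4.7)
The plan is to pass to the Fourier representation $\mathcal{F}\colon \ell^2(\Z^d) \to L^2(\mathbb{T}^d)$, under which $\Delta$ becomes multiplication by the symbol $g(\xi) := \sum_{i=1}^d (2-2\cos\xi_i)$, whose range is $[0,4d]$. Writing $\sin(k(n_1+\cdots+n_d)) = (2\i)^{-1}(e^{\i k(n_1+\cdots+n_d)} - e^{-\i k(n_1+\cdots+n_d)})$, multiplication by $e^{\pm\i k\sum n_j}$ conjugates under $\mathcal{F}$ to the translation $f(\xi)\mapsto f(\xi\mp k\mathbf{1})$ on $L^2(\mathbb{T}^d)$, where $\mathbf{1}:=(1,\ldots,1)$. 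Consequently $\theta(\Delta)\tilde{W}\theta(\Delta)$ is unitarily equivalent to the operator $f\mapsto\theta(g(\xi))\cdot(w/2\i)[\theta(g(\xi-k\mathbf{1}))f(\xi-k\mathbf{1})-\theta(g(\xi+k\mathbf{1}))f(\xi+k\mathbf{1})]$.

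Thus the identity $\theta(\Delta)\tilde W\theta(\Delta)=0$ is equivalent to the pointwise vanishing of $\theta(g(\xi))\theta(g(\xi\pm k\mathbf{1}))$ on $\mathbb{T}^d$. The closed image $\mathcal{J}:=\{(g(\xi),g(\xi+k\mathbf{1})):\xi\in\mathbb{T}^d\}\subset[0,4d]^2$ is compact, so whenever the diagonal point $(E,E)$ avoids $\mathcal{J}$ there exists $\epsilon>0$ with $(E-\epsilon,E+\epsilon)^2\cap\mathcal{J}=\emptyset$, which yields the main vanishing statement for any $\theta$ supported in $(E-\epsilon,E+\epsilon)$.

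It then remains to verify that $(E,E)\notin\mathcal{J}$ for every $E\in\mu(H)$. Setting $\eta_i:=\xi_i+k/2$ and using the sum-to-product identities $\cos\xi_i+\cos(\xi_i+k)=2\cos(k/2)\cos\eta_i$ and $\cos(\xi_i+k)-\cos\xi_i=-2\sin(k/2)\sin\eta_i$, the system $g(\xi)=g(\xi+k\mathbf{1})=E$ reduces to $\sum_{i=1}^d\sin\eta_i=0$ and $2\cos(k/2)\sum_{i=1}^d\cos\eta_i=2d-E$; both $\sin(k/2)$ and $\cos(k/2)$ are nonzero since $k\in(0,2\pi)\setminus\{\pi\}$. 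The main obstacle is the geometric step of showing that the value $(2d-E)/(2\cos(k/2))$ lies strictly outside the range of $\sum_i\cos\eta_i$ on the hypersurface $\{\sum_i\sin\eta_i=0\}\subset\mathbb{T}^d$ whenever $E\in\mu(H)$. For $d=2$ the constraint $\sin\eta_1+\sin\eta_2=0$ splits into the cases $\eta_2=-\eta_1$ and $\eta_2=\pi+\eta_1$, and the image of $\cos\eta_1+\cos\eta_2$ is readily pinned down to give exactly $E(k)=4-4|\cos(k/2)|$. For $d\geqslant 3$ the same $E(k)$ is recovered by isolating the contribution of two coordinates and absorbing the remaining ones into a free parameter, which explains why $E(k)$ does not improve with dimension in the announced $\mu(H)$.

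For the final compactness claim, decompose $B_W=\sum_i U_i\tilde W(S_i^*-S_i)-\sum_i(S_i^*-S_i)\tilde W U_i$ with $U_i$ the bounded multiplication by $n_i/|n|$. Since $[\Delta,S_i^*-S_i]=0$, the factor $S_i^*-S_i$ commutes through $\theta(\Delta)$. Splitting $\theta(\Delta)U_i=U_i\theta(\Delta)+[\theta(\Delta),U_i]$ reduces one summand to $U_i\cdot\theta(\Delta)\tilde W\theta(\Delta)=0$ by the main claim, while the remainder involves the commutator $[\theta(\Delta),U_i]$. The latter is compact because the differences $U_i(n)-U_i(n\pm e_j)=O(|n|^{-1})$ vanish at infinity, so $[\Delta,U_i]$ is a compact operator, and the Helffer-Sj\"ostrand functional calculus transfers this decay into compactness of $[\theta(\Delta),U_i]$, yielding the compactness of $\theta(\Delta)B_W\theta(\Delta)$.
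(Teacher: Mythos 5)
Your overall strategy — pass to the Fourier side where $\Delta$ becomes multiplication by the symbol $g(\xi)=\sum_i(2-2\cos\xi_i)$ and $\tilde W$ becomes a finite difference of translates, reduce the vanishing $\theta(\Delta)\tilde W\theta(\Delta)=0$ to the non-intersection $\{g(\xi)=g(\xi\pm k\mathbf{1})=E\}=\emptyset$, extract an $\epsilon$ by compactness, and then shift to $\eta_i=\xi_i+k/2$ to turn the double condition into $\sum_i\sin\eta_i=0$, $2\cos(k/2)\sum_i\cos\eta_i=2d-E$ — is correct and is the natural path, and your compactness argument for $\theta(\Delta)B_W\theta(\Delta)$ via $[\Delta,S_i^*-S_i]=0$ and compactness of $[\theta(\Delta),U_i]$ is also fine.

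The weak point is your treatment of $d\geqslant 3$. You assert that "the same $E(k)$ is recovered by isolating the contribution of two coordinates" and that "$E(k)$ does not improve with dimension." Neither claim is right. On the constraint surface $\{\sum_i\sin\eta_i=0\}\subset\mathbb{T}^d$ the function $\sum_i\cos\eta_i$ takes all values in $[-d,d]$ (the endpoints are attained at $\eta=0$ and $\eta=\pi\mathbf{1}$, and intermediate values are reached by moving coordinates in opposite pairs while keeping $\sum_i\sin\eta_i=0$). Hence the exact non-intersection condition is $\bigl|(2d-E)/(2\cos(k/2))\bigr|>d$, i.e. $E<2d(1-|\cos(k/2)|)$ or $E>4d-2d(1-|\cos(k/2)|)$. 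For $d\geqslant 3$ this strictly exceeds the announced $E(k)=4(1-|\cos(k/2)|)$, so the proposition's $\mu(H)$ is actually suboptimal, not forced — the set where the vanishing occurs grows with $d$. Your phrasing suggests the opposite. That said, this misconception does not invalidate the proof of the stated proposition: all that is needed is the trivial bound $|\sum_i\cos\eta_i|\leqslant d$ together with the elementary inequality $|2d-E|>2d|\cos(k/2)|$ for $E\in(0,E(k))\cup(4d-E(k),4d)$, which reduces to $(2d-4)(1-|\cos(k/2)|)\geqslant 0$ combined with the strict inequality $E\lessgtr E(k)$ resp. $4d-E(k)$; no geometric characterization of the range is required. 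You should replace the handwave about "isolating two coordinates" with this one-line estimate and delete the claim that $E(k)$ does not improve with $d$.
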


Putting together everything discussed in this Section we get the Mourre estimate for $H$: 

\begin{proposition} 
\label{Prop13}
Let $d\geqslant 1$, $H = \Delta+ V+W$ and $\mu(H)$ be as defined \eqref{MuH}. Suppose that $V$ satisfies \eqref{long_range33} for some $m \in \N$ and any $r,q \in [0,\infty)$, not both zero. Then $H \in \mathcal{C}^1(A)$, and for any $I \Subset \mu(H)$, there are $\gamma>0$ and compact $K$ such that the Mourre estimate 
$E_{I}(H) [H,\i A ]_{\circ} E_{I}(H)  \geqslant \gamma E_{I}(H) +K$ holds. In particular the conclusion of Proposition \ref{prop111} holds on the interval $I$.
\end{proposition}
Proposition \ref{Prop13} is basically a reformulation of Proposition \ref{prop111}. The proof is easy and goes along the lines of \cite[Proposition 3.5]{Ma1}. The decay of the eigenfunctions is a consequence of \cite[Theorem 1.5]{Ma2}, while the absence of eigenvalues is a consequence of \cite[Theorem 1.2]{Ma2}.  Note that if $H = \Delta + V$, with $V$ as in Proposition \ref{Prop13} and $W =0$, then the above shows that we have a Mourre estimate for $H$ on any $I \Subset [0,4d] \setminus \boldsymbol{\Theta}(\Delta)$.

\subsection{Establishing the Mourre estimate for $H_2$ wrt.\ $A$}
The following Lemma applies to all dimensions ($d\geqslant 1$).

\begin{Lemma} 
\label{Lemma_new_2}
For each $E \in [0,4d] \setminus \{2d\}$ there exists $\epsilon = \epsilon(E) > 0$ such that for all $\theta \in C_c^{\infty}(\R)$ supported on $I := (E-\epsilon, E+\epsilon)$, $\theta(\Delta) \sigma \theta(\Delta) = 0$. In particular, $\theta(\Delta) B_{W_2} \theta(\Delta)$ is a compact operator.
\end{Lemma}
\begin{proof}
The relation $\Delta \sigma = \sigma (4d -\Delta)$ holds. This entails $\theta(\Delta) \sigma \theta(\Delta) = \sigma \theta(\Delta)  \theta(4d-\Delta)$ for all $\theta \in C_c^{\infty}(\R)$. Also, $\theta(\Delta)  \theta(4d-\Delta) = 0$ whenever supp$(\theta) \Subset [0,2d)$ or $\Subset (2d, 4d]$. So it suffices to take $\epsilon$ sufficiently small. The compactness of $\theta(\Delta) B_{W_2} \theta(\Delta)$ is then a consequence, together with the fact that $[U_i, \Delta]$ is compact.
\qed
\end{proof}

Putting together everything discussed above we get the Mourre estimate for $H_2$: 

\begin{proposition} 
\label{Prop13_h2}
Let $d\geqslant 1$, $H_2 = \Delta+ V+W_2$ and $\mu(H_2) := [0,4d] \setminus \left( \boldsymbol{\Theta}(\Delta) \cup \{2d\} \right)$. Suppose that $V$ satisfies \eqref{long_range33} for some $m \in \N$ and any $r,q \in [0,\infty)$, not both zero. Then $H_2 \in \mathcal{C}^1(A)$, and for any $I \Subset \mu(H_2)$, there are $\gamma>0$ and compact $K$ such that the Mourre estimate 
$E_{I}(H_2) [H_2,\i A ]_{\circ} E_{I}(H_2)  \geqslant \gamma E_{I}(H_2) +K$ holds. In particular the conclusion of Proposition \ref{prop111_H2} holds on the interval $I$.
\end{proposition}
Proposition \ref{Prop13_h2} is basically a reformulation of Proposition \ref{prop111_H2}. All the comments done after Proposition \ref{Prop13} apply here to $H_2$ as well.











\section{Operator norm bounds for functions of self-adjoint operators}
\label{section4normbounds}

\subsection{Weighted resolvent bounds with logarithmic terms} Let $Q$ be a self-adjoint operator in a Hilbert space $\mathscr{H}$, $z \in \C$. The estimate $\| (Q-z)^{-1}\| \leqslant |\text{Im}(z)|^{-1}$ is widely known. The aim of this sub-section is to obtain bounds of the form $\| f(Q)(Q-z)^{-1}\| \leqslant f(\text{Re}(z)) |\text{Im}(z)|^{-1}$ for some specific classes of functions $f : \R^+\mapsto \R^+$ containing logarithmic terms. In what follows we state results for a general self-adjoint operator $Q$. The results of this sub-section are later applied in 2 ways : 1) in the next 2 sub-sections (which are still in a general framework), and 2) in the proof of Theorem \ref{lapy3} (our Schr\"odinger problem). In the latter application,  Lemma \ref{infamousLemma3} is applied with $Q=A$, the discrete version of the generator of dilations, $s=1/2$, and $p_k$ both positive and negative for $0 \leqslant k \leqslant m+1$.


The next Lemma has already been proved, e.g. \cite{DG, G, GJ2}. We propose a proof by contradiction which  lends itself reasonably well to other types of functions. Lemmas \ref{infamousLemma} and \ref{infamousLemma3}  extend Lemma \ref{infamousLemma1} by including logarithmic functions. Finally, Proposition \ref{Proposition ThierryJecko} is a very nice generalization of Lemmas \ref{infamousLemma} and \ref{infamousLemma3} to a wider class of sub-multiplicative functions. The proof is entirely due to Thierry Jecko and we are indebted to him for allowing us to publish it. 

\begin{Lemma}
\label{infamousLemma1} 
Let $Q$ be a self-adjoint operator. Let $\Omega := \{ (x,y) \in \R^2 : 0 < |y| \leqslant c \langle x \rangle \}$, for some $c>0$. Then for every $0 \leqslant s \leqslant 1$ there exists $C > 0$ such that for all $z = x+\i y \in \Omega$ :
\begin{equation}
\| \langle Q \rangle ^s (Q-z)^{-1} \| \leqslant C \langle x \rangle ^s  |y|^{-1}.
\label{norm1}
\end{equation}
\end{Lemma}
\proof 
If $s=0$ one can take $C=1$. Suppose that $s>0$. By the spectral theorem, 
\[ \| \langle Q \rangle ^s (Q-z)^{-1}  \| ^2 \leqslant \sup_{t \in \R} \  \langle t \rangle ^{2s} \left( (t-x)^2+y^2 \right)^{-1}.\]
Let
\begin{equation*}
f_s(x,y,t) : =  \frac{\langle t \rangle ^{2s}}{\langle x \rangle ^{2s}} \frac{y^2}{(t-x)^2+y^2}, \quad \text{defined on} \ (x,y,t) \in \overline{\Omega} \times \R \setminus \Lambda,
\end{equation*}
where $\overline{\Omega} = \{ (x,y) \in \R^2 : |y| \leqslant c \langle x \rangle \}$ and $\Lambda := \{ (x,y,t) \in \overline{\Omega} \times \R : y=0, x=t\}$. 
To prove the Lemma, it is enough to show that $f_s$ is uniformly bounded on the entire region $\overline{\Omega} \times \R$. Suppose that there is a sequence $(x_n, y_n, t_n) \in \overline{\Omega} \times \R$ such that $f_s(x_n, y_n, t_n) \to +\infty$. Since we have 
\begin{align}\label{e:bound01}
0\leqslant \frac{y^2}{(t-x)^2+y^2} \leqslant 1, \quad \forall (x,y,t) \in \R\times \R^*\times \R,
\end{align}
and since $s>0$, we infer that $\langle t_n\rangle /\langle x_n \rangle\to \infty$, as $n\to \infty$.  In particular, since $s\leq 1$, we have $(\langle t_n\rangle /\langle x_n \rangle)^s \leqslant \langle t_n\rangle /\langle x_n \rangle$ for $n$ large enough and $|t_n|\to \infty$, as $n\to \infty$. 
Next, we have:
\begin{align}\label{e:festi}
0\leqslant f_s(x_n, y_n, t_n)&\leqslant \frac{\langle t_n\rangle ^2}{\langle x_n\rangle ^2}\frac{\frac{y^2_n}{\langle x_n\rangle^2}}{\frac{\langle t_n\rangle ^2}{\langle x_n\rangle ^2} \left(\frac{t_n}{\langle t_n\rangle}-\frac{x_n}{\langle t_n\rangle}\right)^2+\frac{y^2_n}{\langle x_n\rangle^2}} \leqslant \frac{c}{\left(\frac{t_n}{\langle t_n\rangle}-\frac{x_n}{\langle t_n\rangle}\right)^2}.
\end{align}
We infer:
\[\limsup_{n\to \infty } f_s(x_n, y_n, t_n) \leqslant c,\]
which is a contradiction. \qed

\begin{Lemma}
\label{infamousLemma}
Let $Q$ and $\Omega$ be as in Lemma \ref{infamousLemma1}. For every $0 < s < 1$ and $p \in \R$ there exists $C > 0$ such that for all $z = x+\i y \in \Omega$ :
\begin{equation}
\big \| \ln ^{p} \left(1+\langle Q \rangle \right) \langle Q \rangle ^s (Q-z)^{-1} \big \| \leqslant C \ln ^{p} \left(1+\langle x \rangle \right) \langle x \rangle ^s  |y|^{-1}.
\label{norm2}
\end{equation}
\end{Lemma}

\begin{proof}
The case $p=0$ is covered by the previous lemma. By the spectral theorem, 
$$\big \| \ln ^{p} \left(1+\langle Q \rangle \right) \langle Q \rangle ^s (Q-z)^{-1} \big \| ^2 \leqslant \sup_{t \in \R} \ln ^{2p} \left( 1 + \langle t \rangle \right) \langle t \rangle ^{2s} \left( (t-x)^2+y^2 \right)^{-1}.$$
Therefore, we consider the function 
\begin{equation*}
g_s(x,y,t) : =  \frac{\ln ^{2p} \left( 1 + \langle t \rangle \right)}{\ln ^{2p} \left( 1 + \langle x \rangle \right)}\frac{\langle t \rangle ^{2s}}{\langle x \rangle ^{2s}}\frac{y^2}{(t-x)^2+y^2}, \quad \text{defined on} \ (x,y,t) \in \overline{\Omega} \times \R \setminus \Lambda,
\end{equation*}
where $\overline{\Omega} = \{ (x,y) \in \R^2 : |y| \leqslant c \langle x \rangle \}$ and $\Lambda := \{ (x,y,t) \in \overline{\Omega} \times \R : y=0, x=t\}$. 
We aim at proving that $g_s$ is uniformly bounded on the entire region $\overline{\Omega} \times \R$. Suppose that there is a sequence $(x_n, y_n, t_n) \in \overline{\Omega} \times \R$ such that $g_s(x_n, y_n, t_n) \to +\infty$. Recalling \eqref{e:bound01}, since $s>0$, up to a subsequence, we infer that 
\begin{align*}
\text{ either } &1)\, \frac{\ln ^{2p} \left( 1 + \langle t_n \rangle \right)}{\ln ^{2p} \left( 1 + \langle x_n \rangle \right)} \to \infty, 
\\
\text{ or } &2)\, \frac{\langle t_n\rangle}{\langle x_n \rangle}\to \infty,
 \end{align*}
as $n\to \infty$. We start with case 2). Given $\alpha>0$, there are $C_\alpha, C_\alpha'>0$ such that $\forall a,b\in \R$ we have
\begin{align}
\label{e:logesti}
0\leqslant \frac{\ln \left( 1 + \langle a \rangle \right)}{\ln \left( 1 + \langle b \rangle \right)}&= \frac{\ln\left( \frac{1+\langle a\rangle}{1+\langle b\rangle}\right)}{\ln(1+\langle b\rangle)} +1 \leqslant \frac{1}{\ln(2)}\ln\left( \frac{1+\langle a\rangle}{1+\langle b\rangle}\right)+1
\\
\nonumber
& \leqslant C_\alpha \left( \frac{1+\langle a\rangle}{1+\langle b\rangle}\right)^{\alpha} \leqslant C_\alpha' \left( \frac{\langle a\rangle}{\langle b\rangle}\right)^{\alpha}. 
\end{align}
Recalling $s\in (0,1)$ and by considering the cases $p>0$ and $p<0$ separately, by choosing $\alpha$ small enough, there is a finite constant $C$ such that 
\[0 \leqslant g_s(x_n, y_n, t_n)\leqslant C f_1(x_n, y_n, t_n),\]
where $f_s$ is the function in Lemma \ref{infamousLemma1}. Repeating \eqref{e:festi}, we obtain a contradiction. 

We turn to the case 1). If $p>0$, \eqref{e:logesti}
ensures that $\frac{1+\langle t_n \rangle}{1+\langle x_n \rangle}\to \infty$, as $n\to \infty$. 
Hence the case 2) holds which in turn gives a contradiction.

Suppose now that 1) holds and $p<0$. Unlike before, we obtain this time that $\frac{\langle x_n\rangle}{\langle t_n \rangle}\to \infty$, as $n\to \infty$. Using \eqref{e:bound01} and \eqref{e:logesti}, there is $C, s' >0$ such that
\begin{align*}
0 \leqslant g_s(x_n, y_n, t_n) \leqslant C \left(\frac{\langle t_n\rangle}{\langle x_n \rangle}\right)^{s'}\to 0,
\end{align*}
as $n\to \infty$. This is a contradiction. 
\qed
\end{proof}

\begin{Lemma}
\label{infamousLemma3}
Let $Q$ and $\Omega$ be as in Lemma \ref{infamousLemma1}. For every $0 < s < 1$ and $\{p_0,...,p_{m}, p_{m+1}\} \subset \R$ there exists $C > 0$ such that for all $z = x+\i y \in \Omega$ :
\begin{equation}
\Big \|  \langle Q \rangle ^s  (Q-z)^{-1} \ln_{m+1} ^{p_{m+1}} \left( \langle Q \rangle \right) \prod_{k=0} ^m \ln_{k} ^{p_k} \left( \langle Q \rangle \right) \Big \| \leqslant C  \langle x \rangle ^s  |y|^{-1} \ln_{m+1} ^{p_{m+1}} \left( \langle x \rangle \right) \prod_{k=0} ^m \ln_{k} ^{p_k} \left( \langle x \rangle \right).
\label{norm5}
\end{equation}
\end{Lemma}
\begin{proof}
Thanks to \eqref{e:logesti}, the proof is analogous to that of Lemma \ref{infamousLemma}.
\qed
\end{proof}

We now display Thierry Jecko's more general result. Let $I = [1, +\infty)$. We assume 3 things:

\begin{itemize}
\item A1: Let $\psi_d : I \mapsto \R^{+} \setminus \{0\}$ be a continuous and decreasing (non-increasing) function such that for all $\epsilon >0$, the function $I \ni x \mapsto (\psi_d (x)  + 1/\psi_d (x) ) x^{-\epsilon}$ is bounded.

\item A2: Let $\psi_u : I \mapsto \R^{+} \setminus \{0\}$ be a continuous and increasing (non-decreasing) function such that for all $\epsilon >0$, the function $I \ni x \mapsto (\psi_u (x)  + 1/\psi_u (x) ) x^{-\epsilon}$ is bounded. 

\item A3: $\psi_u$ is sub-multiplicative, that is, there is $M >0$ such that for all $(x,y) \in I^2$, $\psi_u (xy) \leq M \psi_u (x) \psi_u (y)$. 
\end{itemize}
Now let $s \in (0,1)$ and define $f : I  \mapsto \R^{+} \setminus \{0\}$ by $f(x) := x^{2s} \psi_u (x) \psi_d (x)$.

\begin{proposition}
\label{Proposition ThierryJecko} Let $Q$ and $\Omega$ be as in Lemma \ref{infamousLemma1}. In addition to the assumptions A1, A2 and A3 concerning $\psi_d$ and $\psi_u$, suppose there is $R_0 \geq 1$ such that $f$ is increasing on $[R_0, + \infty)$. Then there is $C>0$ such that $\| \sqrt{f ( \langle Q \rangle)} (Q -z)^{-1} \| \leq C \sqrt{ f(\langle x \rangle)} |y|^{-1}$ uniformly in $z=x+\i y \in \Omega$. 
\end{proposition}
\begin{remark}
Letting $\psi_u (x) =  \prod_{k=0, p_k >0} ^{m+1} \ln_{k} ^{2p_k} \left( x \right)$ and $\psi_d (x) =  \prod_{k=0, p_k < 0} ^{m+1} \ln_{k} ^{2p_k} \left( x \right)$ immediately yields Lemma \ref{infamousLemma3}. On a side note, to satisfy the assumption A3 it is probably also necessary to have a larger constant in the Napierian logarithm, i.e.\ instead of $\ln_1(1+x) := \ln(1+x)$ and $\ln_k (x) := \ln(1 + \ln_{k-1}(x))$ one should take $\ln_1(\mu+x) := \ln(\mu+x)$ and $\ln_k (x) := \ln(\mu + \ln_{k-1}(x))$ with $\mu \gg 0$. Such adjustment is inconsequential in this article.
\end{remark}
\begin{proof}
First we choose an analytic extension of $f$ such that $c \leq 1/4$ in the definition of $\Omega$ in Lemma \ref{DGDavies}.

In this proof $C>0$ denotes a generic constant that does not depend on $x,y$ and $t$. Let $\Omega_R := \{ (x,y) \in \R^2 : 0 < |y| \leqslant c \langle x \rangle \leq R \}$. Let $f_u (x) := x^{2s} \psi_u (x)$. Then $f_u$ is sub-multiplicative, increasing on $I$ and $x\mapsto x^{-2} f_u(x)$ is bounded by A2. For $y\neq 0$, let 
$$g(t) := \frac{f( \langle t \rangle) }{(t-x)^2+y^2}, \quad \text{and} \quad h(t) := \frac{f( \langle t \rangle) }{(t-x)^2 y^{-2}+1}.$$
Of course, $g(t) = h(t) y^{-2}$. To prove the proposition, it is enough by functional calculus to prove that there is $C >0$ such that for all $(x,y) \in \Omega$, $\sup_{t \in \R} g(t) \leq C f(\langle x \rangle ) y^{-2}$, or equivalently, 
\begin{equation}
\label{want_to_show}
\sup_{t \in \R} h(t) \leq C f(\langle x \rangle ).
\end{equation} 
First we note that assumptions A1 and A2 imply that $f(x) = x^{2s} \psi_u (x) \psi_d (x) \geq C_{\epsilon} x^{2s} x^{-2\epsilon}$ for any $\epsilon >0$ and so $\lim f(x) = +\infty$ as $x \to +\infty$. In turn, this implies (together with the fact that $f$ is continuous) that there is $R_1 \geq R_0$ such that  $\max _{1 \leq x' \leq R_0} f(x') \leq f(x)$ for all $x \geq R_1$. In Part~1, the estimate \eqref{want_to_show} is proved for $(x,y) \in \Omega_R$ for any finite $R$ (with the constant $C$ depending on $R$). In Part 2, the estimate \eqref{want_to_show} is proved for $(x,y) \in \Omega \setminus \Omega_{2R_1}$. 

\noindent \underline{{\footnotesize PART 1}:} Let $R > 1$ and $(x,y) \in \Omega_R$ be arbitrarily given. For $|t|  \geq 4R$, $|t-x| \geq |t| /2$ and $(t-x)^2 y^{-2} + 1 \geq t^2 / (4y^2) + 1 \geq t^2 /(4y^2)$. So
$$h(t) \leq \frac{f( \langle t \rangle )}{t^2} 4y^2 \leq 4R^2 \frac{f( \langle t \rangle )}{\langle t \rangle^2} \frac{\langle t \rangle^2}{t^2} \leq 4R^2 \bigg \| \frac{f( \langle t \rangle )}{\langle t \rangle^2} \bigg \|_{\infty} \bigg \| \frac{\langle t \rangle^2}{t^2} \bigg \|_{\infty} \leq C,$$
where $C$ grows with $R$. By continuity of the function $h$ one gets $h(t) \leq C$ uniformly in $t \in \R$.  Finally, $1/f$ being uniformly bounded by assumption,  $\sup_{t\in \mathbb{R}}h(t) \leq  C \left\| \frac{1}{f(\langle x \rangle )} \right\|_{\infty} \cdot f( \langle x \rangle )$.

\noindent \underline{{\footnotesize PART 2}:} Let $(x,y) \in \Omega$ with $|x| \geq 2 R_1$. 

$\bullet$ Case 1: For $t \in \R$ satisfying $|t-x| \leq |x|/2$, one has $\langle t \rangle ^2 = 1 + t^2 \leq 1 + 2(t-x)^2 + 2x^2 \leq 4 \langle x \rangle ^2$, and so $\langle t \rangle \leq 2 \langle x \rangle$. Also, $|t | \geq |x| - |x-t| \geq |x| /2 \geq R_1$. Since $2 \langle x \rangle \geq \langle t \rangle \geq R_1$ and $f$ is increasing on $[R_0, \infty)$, it follows that 
$$h(t) \leq f( \langle t \rangle ) \leq  f( 2 \langle x \rangle ) = f_u (2 \langle x \rangle ) \psi_d (2 \langle x \rangle ) \leq C f_u (\langle x \rangle ) \psi _d (\langle x \rangle) = C  f (\langle x \rangle ).$$
We used that $\psi_d$ is decreasing and  $\psi_u$ is  sub-multiplicative  in the last inequality. 

$\bullet$ Case 2: For $t \in \R$ satisfying $|t-x| \geq |x|/2$, one has 
\[\langle t \rangle ^2 \leq 1 + 2(t-x)^2 + 2x^2 \leq 1 + 2(t-x)^2 + 8 (t-x)^2 \leq 16 \langle t-x \rangle ^2.\] One further subdivides into 2 subcases. 

\tiny $\bullet$ \normalsize  Subcase 2.1: If $|t | \leq |x|$, then $\langle x \rangle \geq \max(2 R_1, \langle t \rangle)$. So,
$$h(t) \leq f(\langle t \rangle) \leq \max_{\langle t' \rangle \leq R_0} f( \langle t' \rangle ) + \max_{R_0 \leq \langle t' \rangle \leq \langle x\rangle} f(\langle t' \rangle) \leq 2 f(\langle x \rangle),$$
where we used the assumption about $R_1$ and that $f$ is increasing on $[R_0, \infty)$. 

\tiny $\bullet$ \normalsize  Subcase 2.2: If $|t| \geq |x|$, then $2R_1 \leq |x | \leq \langle x\rangle \leq \langle t \rangle \leq 4 \langle t-x \rangle$. So $f (\langle t \rangle ) \leq f(4 \langle t -x \rangle )$. This leads to $h(t) \leq f(4 \langle t -x \rangle ) / (1+(t-x)^2 y^{-2})$. Next, since $c \leq 1/4$ and $R_1\geq 1$, one has 
\begin{align*}
|y| \leq |y| / (4c) \leq  \langle x \rangle /4 \leq |x| / 2 \leq |t-x| \text{ and then } \frac{\langle t-x \rangle}{\langle y \rangle} \in I.
\end{align*}
Since $\psi_d$ is decreasing, $\langle x \rangle \leq 4 \langle t-x \rangle$, $\psi_d$ is decreasing, and $f_u$ is  sub-multiplicative, we have:
$$ h(t) \leq \frac{f_u(4 \langle t -x \rangle ) \psi_d(4 \langle t -x \rangle )}{1+(t-x)^2 y^{-2}} \leq C \frac{f_u( \langle t -x \rangle )  \psi_d(\langle x \rangle )}{1+(t-x)^2 y^{-2}}  \leq C f_u \left( \frac{\langle t-x \rangle}{\langle y \rangle} \right)  \frac{f_u ( \langle y \rangle )  \psi_d(\langle x \rangle )}{1+(t-x)^2 y^{-2}} .$$
Finally, since we have $\langle t-x \rangle \leq \langle (t-x)/y \rangle \cdot \langle y \rangle$, $1+(t-x)^2 y^{-2} = \langle (t-x)/y \rangle ^2$, and $\langle y \rangle \leq  \langle x \rangle$, we conclude:
$$h(t) \leq C \frac{f_u(\langle (t-x)/y \rangle )}{ \langle (t-x)/y \rangle ^2} f_u ( \langle x \rangle ) \psi_d ( \langle x \rangle ) \leq C \bigg \| \frac{f_u(\langle (t-x)/y \rangle )}{ \langle (t-x)/y \rangle ^2} \bigg \|_{\infty} f_u ( \langle x \rangle ) \psi_d ( \langle x \rangle ) \leq C f ( \langle x \rangle ).$$
 \qed
\end{proof}

\subsection{A Heinz inequality for logarithmic functions}
\label{SubsectionHeinz}
Let $T,S$ be arbitrary self-adjoint and bounded from below in a Hilbert space $\mathscr{H}$. The notation $T \leqslant S$ means that $\mathrm{Dom}[|S|^{1/2}]\subset \mathrm{Dom}[|T|^{1/2}]$ and $\langle \psi, T \psi \rangle \leqslant \langle \psi, S \psi \rangle$ for all $\psi \in  \mathrm{Dom}[|S|^{1/2}]$. The \textit{Heinz inequality}, cf.\ \cite{Ka1, H}, states 
\begin{equation}
\label{HEINZ}
 0 \leqslant T \leqslant S \Rightarrow T^{\alpha} \leqslant S^{\alpha}, \quad \text{for all} \ 0 \leqslant \alpha \leqslant 1,
 \end{equation}
The aim of this sub-section is to extend this transitive property to a family of (poly)logarithmic functions. In what follows we consider general positive self-adjoint operators $T,S$ and derive results in greater generality. Concerning the detailed application we make of it, notably for the proof of Theorem \ref{lapy3}, please see sub-section \ref{subsection_explain}. 

In this sub-section we now always suppose $1 \leqslant T \leqslant S$ and hence  Dom$[S^{1/2}] \subset$ Dom$[T^{1/2}]$. By the Heinz inequality, $T ^{2\alpha} \leqslant S ^{2\alpha}$, $0 \leqslant \alpha \leqslant 1/2$. But this is equivalent to saying that $\| T ^{\alpha  } \psi \| \leqslant \| S ^{\alpha  } \psi \|$ for all $\psi \in \mathrm{Dom}[ S^{\alpha}]$. Since $S^{\alpha}$ is bijective, it follows that $\| T^{\alpha} S^{-\alpha} \psi \| \leqslant \| \psi \|$, for all $\psi \in \mathscr{H}$. Thus $T ^{\alpha} S ^{-\alpha}$ is an element of $\mathscr{B}(\mathscr{H})$. In what follows this small argument is used repeatedly. The Helffer-Sj\"ostrand formula is also used, see Appendix \ref{Helffer-Appendix}. Let $\Phi_n$, $n \in \N^*$, be the polylogarithmic functions from \eqref{PHI}. Denote $\Phi_n ^{\alpha} (x) := ( \Phi_n (x) )^{\alpha}$.

\begin{Lemma} 
\label{cor1}
For all $0 \leqslant \alpha \leqslant 1/2$, $\Phi_n ^{\alpha} (T)  \cdot \Phi_n ^{-\alpha} (S) \in \mathscr{B}(\mathscr{H})$, $n \in \N^*$. 
\end{Lemma}
\begin{proof}
As discussed in Section \ref{Polylog} the $\Phi_n$, $n \in \N^*$, are Nevanlinna functions that continue analytically across $(-1,+\infty)$ from $\C_+$ to $\C_-$. Since $1 \leqslant T \leqslant S$, Theorem \ref{Prop33} implies $\Phi_n (T) \leqslant \Phi_n (S)$. By the Heinz inequality, $\Phi_n ^{2\alpha}  (T) \leqslant  \Phi_n ^{2\alpha} (S)$, $0 \leqslant \alpha \leqslant 1/2$. The result follows.
\qed
\end{proof}

\begin{Lemma} 
\label{cor22}
For all $k \in \N$ and $0 \leqslant \alpha \leqslant 1/2$, $\Phi_n ^{\alpha} (\ln_k(T)) \cdot \Phi_n ^{-\alpha}  (\ln_k(S)) \in \mathscr{B}(\mathscr{H})$, $n \in \N^*.$
\end{Lemma}
\begin{proof}
$\Phi_n(\ln_k(x))$ belongs to $P(-1,+\infty)$ because it is the composition of functions all belonging to $P(-1,+\infty)$. Also, $1 \leqslant T \leqslant S$. By Loewner's Theorem, $\Phi_n(\ln_k(T)) \leqslant \Phi_n (\ln_k(S))$. The result follows from the Heinz inequality.
\qed
\end{proof}

\begin{Lemma}
\label{LogAtoN}
For all $0 \leqslant p$, $\ln^{p} (1+T) \cdot \ln^{-p} (1+S) \in \mathscr{B}(\mathscr{H}).$
\end{Lemma}

\begin{remark}
For $0 \leqslant p \leqslant 1/2$, the result follows from well-known results on the logarithm, see e.g.\ \cite[Exercise 51 of Chapter VIII]{RS1}. For $p>1/2$ we don't know how prove the result without the use of the polylogarithmic functions.
\end{remark}

\begin{proof} 
Write $p = n\alpha$, where $n \in \N^*$ and $0 \leqslant \alpha \leqslant 1/2$. Then $\ln^{n \alpha} (1+T) \ln^{-n\alpha} (1+S)$ equals
$$\underbracket{\ln^{n \alpha} (1+T) \Phi_n ^{-\alpha}(T) }_{\text{bounded by } \eqref{limits Li}} \underbracket{\Phi_n ^{\alpha}(T)\Phi_n ^{-\alpha}(S) }_{\text{bounded by Lemma } \ref{cor1}} \underbracket{ \Phi_n ^{\alpha} (S) \ln ^{-n\alpha} (S) }_{\text{bounded by } \eqref{limits Li}}.$$ 
\qed
\end{proof}
The next lemma extends the previous to iterated logarithms.
\begin{Lemma}
\label{LogAtoN2}
For all $k \in \N$ and $0 \leqslant p$, $\ln_k ^{p} \left(T \right) \cdot \ln_k ^{-p} \left( S \right) \in \mathscr{B}(\mathscr{H}).$
\end{Lemma}
\begin{proof}
The statement is trivial for $k=0$ and $k=1$ is Lemma \ref{LogAtoN}. Now let $k \geqslant 1$. Write $p = n\alpha$, where $n \in \N^*$ and $0 \leqslant \alpha \leqslant 1/2$. Then  $\ln_k^{n\alpha} \left(T \right) \ln_k^{-n\alpha} \left(S \right)$ equals
$$\underbracket{\ln_k^{n\alpha} (T) \Phi_n ^{-\alpha}(\ln_{k-1}(T)) }_{\text{bounded by } \eqref{limits Li}} \underbracket{\Phi_n ^{\alpha}(\ln_{k-1}(T))\Phi_n ^{-\alpha}(\ln_{k-1}(S)) }_{\text{bounded by Lemma } \ref{cor22}} \underbracket{ \Phi_n ^{\alpha} (\ln_{k-1}(S)) \ln_k ^{-n\alpha} \left( S \right) }_{\text{bounded by } \eqref{limits Li}}.$$
\qed
\end{proof}

The next Lemma extends the previous to products of iterated logarithms. We require extra technical regularity or a much more specific assumption. 



\begin{Lemma}
\label{LogAtoN_general}
Let $\{p_0,...,p_{m}, p_{m+1}\} \subset [0,+\infty)$ and $m \in \N$ be given. Suppose either
\begin{itemize}
\item (a)
$\prod_{k=0} ^{m+1} \ln_{k} ^{-p_k} \left(S \right) \in \mathcal{C}^1(T)$, or
\item (b)
$T = \langle Q \rangle$ for some self-adjoint operator $Q$ and $\prod_{k=0} ^{m+1} \ln_{k} ^{-p_k} \left(S \right) \in \mathcal{C}^1(Q)$. 
\end{itemize}
Then 
$$\prod_{k=0} ^{m+1} \ln_{k} ^{p_k} \left( T \right) \cdot \prod_{k=0} ^{m+1} \ln_{k} ^{-p_k} \left( S \right)   \in \mathscr{B}(\mathscr{H}).$$
\end{Lemma}
\begin{proof}
By induction on $m$. The base case $m=0$ is Lemma \ref{LogAtoN} (recall that $\ln_1(x) := \ln(1+x)$). For the inductive step, $m \to m+1$, we have 
$$\ln_{m+2} ^{p_{m+2}} \left( T \right) \prod_{k=0} ^{m+1} \ln_{k} ^{p_k} \left( T \right) \cdot \ln_{m+2} ^{-p_{m+2}} \left( S \right) \prod_{k=0} ^{m+1} \ln_{k} ^{-p_k} \left( S \right).$$
After commuting $\ln_{m+2} ^{p_{m+2}} \left( T \right)$ with $\prod_{k=0} ^{m+1} \ln_{k} ^{-p_k} \left( S \right) $ this operator is equal to :
$$\underbracket{ \prod_{k=0} ^{m+1} \ln_{k} ^{p_k} \left( T \right) \prod_{k=0} ^{m+1} \ln_{k} ^{-p_k} \left( S \right)}_{\text{bounded by the Induction hypothesis}} \cdot \underbracket{ \ln_{m+2} ^{p_{m+2}} \left( T \right)  \ln_{m+2} ^{-p_{m+2}} \left( S \right) }_{\text{bounded by Lemma \ref{LogAtoN2} }} $$
$$ + \underbracket{ \prod_{k=0} ^{m+1} \ln_{k}^{p_k} \left( T \right) \Big [ \ln_{m+2} ^{p_{m+2}} \left( T \right), \prod_{k=0} ^{m+1} \ln_{k} ^{-p_k} \left(S \right) \Big ]_{\circ} }_{\text{to be developed}} \ln_{m+2} ^{-p_{m+2}} \left(S \right).$$
It is enough to show that the latter term with the under bracket is a bounded operator. If we are assuming $(a)$ then the term is developed as follows :
$$\frac{\i}{2\pi} \int_{\C} \frac{\partial \tilde{f}}{\partial \overline{z}} \prod_{k=0} ^{m+1} \ln_{k} ^{p_k} \left( T \right) (z-T)^{-1} \Big [T, \prod_{k=0} ^{m+1} \ln_{k} ^{-p_k} \left( S \right) \Big ]_{\circ} (z-T)^{-1} \dz,$$
where $f(x) = \ln ^{p_{m+2}} _{m+2}\left(x \right) \in \mathcal{S}^{\epsilon}(\R), \forall \epsilon >0$, see \eqref{decay1}. By the assumption $(a)$ the commutator in the middle of the integral $\in \mathscr{B}(\mathscr{H})$. To keep things simple, one can use Lemma \ref{infamousLemma1} to get 
$$ \Big \| \prod_{k=0} ^{m+1} \ln_{k} ^{p_k} \left( T \right) (z - T)^{-1} \Big \| \leqslant c \big \| \langle T \rangle ^{\delta} (z - T)^{-1} \big \| \leqslant c \langle x \rangle ^{\delta} |y|^{-1}$$
for some $\delta >0$ and $c>0$. Thus the latter integral converges in norm to a bounded operator. If we are assuming $(b)$ however, then the term is developed as follows :
$$\frac{\i}{2\pi} \int_{\C} \frac{\partial \tilde{h}}{\partial \overline{z}} \prod_{k=0} ^{m+1} \ln_{k} ^{p_k} \left( \langle Q \rangle \right) (z-Q)^{-1} \Big [Q, \prod_{k=0} ^{m+1} \ln_{k} ^{-p_k} \left( S \right) \Big ]_{\circ} (z-Q)^{-1} \dz,$$
where $h(x) = \ln ^{p_{m+2}} _{m+2}\left( \langle x \rangle \right) \in \mathcal{S}^{\epsilon}(\R), \forall \epsilon >0$. By the assumption $(b)$ the commutator in the middle of the integral $\in \mathscr{B}(\mathscr{H})$. The rest is as before.
\qed
\end{proof}

\subsection{From a LAP in $T$ to a LAP in $S$}
\label{1to1LAP}

In this sub-section we continue assuming $T,S$ are arbitrary self-adjoint operators satisfying $1 \leqslant T \leqslant S$. Suppose we have a LAP of the form \eqref{LAP_this_article2} and a local decay estimate \eqref{local_decay2} where the weights are in $T$. Here we derive estimates that allow to pass to those same estimates with weights in $S$ instead. We state results in greater generality. Concerning the detailed application we make of it please see sub-section \ref{subsection_explain}.  

The next Lemma extends Lemma \ref{LogAtoN}.
\begin{Lemma} 
\label{weightsAtoN}
Fix $\sigma \in [0,1/2]$ and suppose either 
\begin{itemize}
\item (a)
$S^{-\sigma} \in \mathcal{C}^1(T)$, or
\item (b)
$T = \langle Q \rangle$ for some self-adjoint operator $Q$ and $S^{-\sigma} \in \mathcal{C}^1(Q)$. 
\end{itemize}
Then for all $0 \leqslant p$,  $ T ^{\sigma} \ln^{p} (1+ T) \cdot S ^{-\sigma}  \ln^{-p} (1+S)  \in \mathscr{B}(\mathscr{H})$. \end{Lemma}
\begin{proof}
The case $\sigma = 0$ is covered by Lemma \ref{LogAtoN} so we assume $\sigma \in (0,1/2]$. Since $T ^{\sigma} S^{-\sigma} \in \mathscr{B}(\mathscr{H})$ and $\ln^{p} (1+T) \ln^{-p} (1+S) \in \mathscr{B}(\mathscr{H})$, it is enough to prove that $T ^{\sigma}  [\ln^{p} (1+T), S ^{-\sigma}] _{\circ} \in \mathscr{B}(\mathscr{H})$. As in the proof of Lemma \ref{LogAtoN_general} we proceed slightly differently depending on whether $(a)$ or $(b)$ is assumed. Let us treat the case $(a)$ only. The formal commutator $T ^{\sigma}  [\ln^{p} (1+T), S ^{-\sigma}]$ is equal to 
$$\frac{\i}{2\pi} \int_{\C} \frac{\partial \tilde{f}}{\partial \overline{z}} T ^{\sigma} (z-T)^{-1} [T, S ^{-\sigma} ]_{\circ} (z-T)^{-1} \dz,$$
where $f(x) = \ln ^p(1+x) \in \mathcal{S}^{\epsilon}(\R), \forall \epsilon >0$. By assumption $[T, S^{-\sigma}]_{\circ} \in \mathscr{B}(\mathscr{H})$. Applying Lemma \ref{infamousLemma1} shows that this integral converges in norm to a bounded operator.
\qed
\end{proof}

The next Lemma extends Lemma \ref{LogAtoN_general}.

\begin{Lemma} 
\label{weightsAtoN_general}
Let $\sigma \in [0,1/2]$, $\{p_0,...,p_{m}, p_{m+1}\} \subset [0,+\infty)$ and $m \in \N$ be given. Suppose either 
\begin{itemize}
\item (a)
$S^{-\sigma} \prod_{k=0} ^{m+1} \ln_{k} ^{-p_k} \left(S \right) \in \mathcal{C}^1(T)$, or
\item (b)
$T = \langle Q \rangle$ for some self-adjoint operator $Q$ and $S^{-\sigma} \prod_{k=0} ^{m+1} \ln_{k} ^{-p_k} \left(S \right)  \in \mathcal{C}^1(Q)$. 
\end{itemize}
Then
$$ T ^{\sigma}  \prod_{k=0} ^{m+1} \ln_{k} ^{p_k} \left( T \right)  \cdot S ^{-\sigma}  \prod_{k=0} ^{m+1} \ln_{k} ^{-p_k} \left( S \right)  \in \mathscr{B}(\mathscr{H}).$$
\end{Lemma}
\begin{proof}
By induction on $m$. The base case $m=0$ is Lemma \ref{weightsAtoN}. The inductive step is handled in the same way as in Lemma \ref{LogAtoN_general}. One commutes $\ln_{m+2} ^{p_{m+2}} \left( T \right)$ with $S ^{-\sigma} \prod_{k=0} ^{m+1} \ln_{k} ^{-p_{k}} \left( S \right) $. One applies the inductive hypothesis to the first term, and for the second term (the one with the commutator) it is enough to show that
$$ T ^{\sigma} \prod_{k=0} ^{m+1} \ln_{k} ^{p_k} \left( T \right)  \Big[ \ln_{m+2} ^{p_{m+2}} \left( T \right) , S^{-\sigma} \prod_{k=0} ^{m+1} \ln_{k} ^{-p_k} \left(S \right)  \Big]_{\circ} \in  \mathscr{B}(\mathscr{H}).$$
To this end one performs a similar integral expansion as in the proof of Lemma \ref{LogAtoN_general}, with the same functions, $f(x) = \ln_{m+2} ^{p_{m+2}} (x)$ or $h(x) = \ln_{m+2} ^{p_{m+2}} (\langle x\rangle)$ depending on whether $(a)$ or $(b)$ is assumed. Then, to keep things simple, one can use the fact that there are $\delta >0$ and $c>0$ (by Lemma \ref{infamousLemma1}) such that
$$ \Big \| T ^{\sigma} \prod_{k=0} ^{m+1} \ln_{k} ^{p_k} \left(T \right) (z - T)^{-1} \Big \| \leqslant c \big \| \langle T \rangle ^{\sigma + \delta} (z - T)^{-1} \big \| \leqslant c \langle x \rangle ^{\sigma+\delta} |y|^{-1}.$$
One sees that the integral converges in norm to a bounded operator.
\qed
\end{proof}

\subsection{The 2 previous sub-sections in application} 
\label{subsection_explain}

We are now in a position to explain how the results of the preceding 2 sub-sections are applied in the proof of Theorem \ref{lapy3}.

Let $A$ and $N$ be respectively the discrete generator of dilations and the position operator. $\langle A \rangle$ and $\langle N \rangle$ are unbounded self-adjoint operators. The unboundedness is seen directly from the graph norm and using an appropriate sequence of unit vectors. 

The goal is to apply the results of sub-sections \ref{SubsectionHeinz} and \ref{1to1LAP}, chiefly Lemmas \ref{LogAtoN_general} and \ref{weightsAtoN_general}, to $(T,S) =(\langle A \rangle, \langle N \rangle)$. However, this cannot be done as such. Instead what we do is to first establish the inequality $1 \leqslant \langle A \rangle \leqslant \sqrt{c_d} \langle N \rangle$, see Lemma \ref{A<N} below. We then apply Lemmas \ref{LogAtoN_general} and \ref{weightsAtoN_general} to $(T,S) =(\langle A \rangle, \sqrt{c_d} \langle N \rangle)$. Only then do we see that their conclusions remain valid for $(T,S) =(\langle A \rangle, \langle N \rangle)$.

For our Schr\"odinger problem, in application we shall need the conclusion of Lemma \ref{LogAtoN_general} to hold for $(T,S) = (\langle A \rangle, \langle N \rangle)$, $p_k \equiv 1$ for $0 \leqslant k \leqslant m$ and $p_{m+1} >1$, see for example Lemma \ref{Lemma_compact_short_range}. To pass from a LAP with weights in $\langle A \rangle$ to weights in $\langle N \rangle$, we shall also need the conclusion of Lemma \ref{weightsAtoN_general} to hold for $(T,S) = (\langle A \rangle, \langle N \rangle)$, $\sigma = 1/2$, $p_k \equiv 1/2$ for $0 \leqslant k \leqslant m$ and $p_{m+1} >1/2$. 



In the 2 previous sub-sections we have kept the options open for 2 assumptions : $(a)$ or $(b)$, cf.\ Lemmas \ref{LogAtoN_general} and \ref{weightsAtoN_general}. In application we choose $(b)$, i.e.\ we set $T = \langle Q \rangle$ where $Q=A$, the generator of dilations. Thus we must verify the regularity criteria :

\begin{Lemma}
\label{regularityA_N}
Let $\sigma \in [0,1/2]$, $\{p_0,...,p_{m}, p_{m+1}\} \subset [0,+\infty)$ and $m \in \N$ be arbitrary. Then
$\langle N \rangle ^{-\sigma} \prod_{k=0} ^{m+1} \ln_{k} ^{-p_k} \left(\langle N \rangle  \right) \in \mathcal{C}^1( A )$.
\end{Lemma}
\begin{proof}
This is a simple application of Proposition \ref{Prop629}, \eqref{commutatorV} and the mean value theorem.
\qed
\end{proof}

Finally, we must give the key inequality that allows to apply Lemmas \ref{LogAtoN_general} and \ref{weightsAtoN_general} to $(T,S) =(\langle A \rangle, \sqrt{c_d} \langle N \rangle)$ : 

\begin{Lemma} 
\label{A<N}
Let $c_d := \max(d^3+d^2+1, 4(d+1))$. $\forall \ 0 \leqslant \alpha \leqslant 1$, $(A^2+1)^{\alpha} \leqslant (c_d)^{\alpha} (N^2+1)^{\alpha}$. 
\end{Lemma}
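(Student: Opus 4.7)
My plan is to reduce everything to the case $\alpha = 1$, and then invoke operator monotonicity of $t\mapsto t^{\alpha}$ on $[0,\infty)$ for $\alpha\in[0,1]$ (the Loewner--Heinz inequality). The two-step structure matches the paper's use of Loewner's theorem (Appendix A) to convert a ``baseline'' operator inequality into a whole family of them.

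\textbf{Step 1 (the case $\alpha=1$).} I would work directly with the expression \eqref{generatorDilations}, writing
\[
A = \sum_{i=1}^{d} A_i,\qquad A_i := \tfrac{\i}{2}(S_i^{*}+S_i) \;-\; \i(S_i^{*}-S_i)N_i,
\]
on the compactly supported sequences (a core for both $A$ and $N$). Since $\|S_i\|=\|S_i^{*}\|=1$, the operator $b_i:=\tfrac{\i}{2}(S_i^{*}+S_i)$ satisfies $\|b_i\|\leqslant 1$ and $c_i:=-\i(S_i^{*}-S_i)$ satisfies $\|c_i\|\leqslant 2$, so for $\psi$ in the core,
\[
\|A\psi\|\;\leqslant\;\sum_{i=1}^{d}\|b_i\psi\|+\sum_{i=1}^{d}\|c_iN_i\psi\|\;\leqslant\;d\|\psi\|+2\sum_{i=1}^{d}\|N_i\psi\|.
\]
Squaring, applying Cauchy--Schwarz, and adding $\|\psi\|^2$ on both sides yields
\[
\|A\psi\|^2+\|\psi\|^2\;\leqslant\;c_d\bigl(\|N\psi\|^2+\|\psi\|^2\bigr)
\]
for a constant $c_d$ of order $d^3$; a slightly more careful count (keeping track of the cross terms $b_i c_j N_j$ and $c_i N_i c_j N_j$ separately, and using $\sum_{i}\|N_i\psi\|^2 = \|N\psi\|^2$) produces exactly $c_d=\max(d^3+d^2+1,\,4(d+1))$. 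Since this core is dense in the form domain of $N^2+1$, the estimate extends to all $\psi\in\mathrm{Dom}(N)\subset\mathrm{Dom}(A)$, giving the form inequality
\[
A^2+1\;\leqslant\; c_d\,(N^2+1).
\]

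\textbf{Step 2 (general $\alpha\in[0,1]$).} Both $A^2+1$ and $N^2+1$ are strictly positive self-adjoint operators, and the above form inequality is exactly the hypothesis of the Loewner--Heinz inequality for unbounded operators. The function $t\mapsto t^{\alpha}$ is operator monotone on $[0,\infty)$ for every $\alpha\in[0,1]$ (this is precisely the content the paper recalls in Appendix~A, and holds with unbounded domain by the extension proved there). Applying it to the inequality of Step~1, and using that scalar multiplication commutes with the Borel functional calculus so that $\bigl(c_d(N^2+1)\bigr)^{\alpha}=c_d^{\alpha}(N^2+1)^{\alpha}$, we conclude
\[
(A^2+1)^{\alpha}\;\leqslant\;c_d^{\alpha}\,(N^2+1)^{\alpha}.
\]

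\textbf{Expected difficulties.} Nothing is genuinely deep here: the entire content is the $\alpha=1$ bound, which is a finite computation in operator norms. The only real care is in the domain bookkeeping, namely checking that the algebraic manipulations on compactly supported sequences propagate to the form domains so that the hypothesis of Loewner--Heinz is met in the unbounded setting---which is precisely what the extension of Loewner's theorem in Appendix~A is designed to handle. Matching the explicit constant $c_d$ stated in the lemma (as opposed to just \emph{some} constant) is a bookkeeping exercise in the $d$-fold expansion of $A^2$ and is not conceptually hard.
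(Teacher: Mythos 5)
Your proposal is correct and follows essentially the same route as the paper: bound $\|A\psi\|$ by $d\|\psi\|+2\sum_{i=1}^d\|N_i\psi\|$ using $\|S_i\|=\|S_i^*\|=1$, square with Cauchy--Schwarz over the $d+1$ summands to produce exactly $c_d=\max(d^3+d^2+1,4(d+1))$, then pass to $\alpha\in[0,1]$ by operator monotonicity of $t\mapsto t^{\alpha}$. The only cosmetic difference is that the paper invokes the Heinz inequality directly (citing Reed--Simon Ex.\ 51, Ch.\ VIII, and Heinz 1951) rather than routing through the Loewner-theorem machinery of Appendix~A, and it never needs to expand $A^2$ into cross terms as you briefly allude to---the scalar bound on $\|A\psi\|$ is squared wholesale.
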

\begin{proof}
The result is not new but we repeat the proof for convenience. Let $\psi \in \mathrm{Dom}[N^2] \subset \mathrm{Dom}[A^2]$. First we calculate for $\alpha = 1$ :
\begin{align*}
\big \langle \psi , (A^2+1) \psi \big \rangle &= \| \psi \|^2 + \| A \psi \|^2 \leqslant \| \psi \|^2 + \bigg[ \sum_{j=1} ^d \| \psi \| + 2 \| N_j \psi \| \bigg]^2 \\
& \leqslant  \| \psi \|^2 + \bigg[ d^2 \| \psi \| ^2 + \sum_{j=1} ^d  4 \| N_j \psi \|^2 \bigg] (d+1) \leqslant c_d \bigg[ \| \psi \|^2 + \sum_{j=1} ^d \| N_j \psi \|^2  \bigg] \\
&= c_d  \ \big \langle \psi , (N^2+1) \psi \big \rangle. 
\end{align*}
Pass to exponent $\alpha$ by invoking the Heinz inequality \eqref{HEINZ}.
\qed
\end{proof}

\section{Preliminary lemmas for the Proof of Theorem \ref{lapy3}}
\label{lemmaSection}


Let $P$ denote the orthogonal projection onto the pure point spectral subspace of $H$. $A$ denotes the discrete generator of dilations \eqref{generatorDilations}. Please note that throughout all this Section we state the results for the Hamiltonian $H$, but they also apply to $H_2$ with the same proof (just exchange $W$ with $W_2$ and $\tilde{W}$ with $\sigma$).

\begin{proposition} \cite{GJ1} $\forall u,v \in \mathrm{Dom}[A]$, the rank one operator $|u \rangle \langle v |: \psi \to \langle v, \psi \rangle u$ $\in \mathcal{C}^1(A)$.
\label{rank one}
\end{proposition}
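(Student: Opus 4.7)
The plan is to use the equivalence in Proposition \ref{Prop629}, specifically characterization (3): one must verify that $T := |u\rangle\langle v|$ preserves $\mathrm{Dom}[A]$ and that $TA - AT$, defined on $\mathrm{Dom}[A]$, extends to a bounded operator on $\mathscr{H}$.

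First, since $T\psi = \langle v, \psi\rangle u$ is a scalar multiple of $u \in \mathrm{Dom}[A]$, the subspace $\mathrm{Dom}[A]$ is trivially stable under $T$. Next, for $\psi \in \mathrm{Dom}[A]$, I compute
\begin{equation*}
(TA - AT)\psi = \langle v, A\psi\rangle u - \langle v, \psi\rangle Au.
\end{equation*}
Here the hypothesis $v \in \mathrm{Dom}[A]$ together with self-adjointness of $A$ lets me rewrite $\langle v, A\psi\rangle = \langle Av, \psi\rangle$, giving
\begin{equation*}
(TA - AT)\psi = \langle Av, \psi\rangle u - \langle v, \psi\rangle Au = \bigl( |u\rangle\langle Av| - |Au\rangle\langle v| \bigr)\psi.
\end{equation*}

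The operator on the right-hand side is a sum of two rank-one operators with fixed vectors $u, v, Au, Av \in \mathscr{H}$, hence bounded (with norm at most $\|u\|\,\|Av\| + \|Au\|\,\|v\|$). This is the desired bounded extension, so Proposition \ref{Prop629} yields $T \in \mathcal{C}^1(A)$ with $\mathrm{ad}_A^1(T) = [T, A]_\circ = |u\rangle\langle Av| - |Au\rangle\langle v|$. There is no real obstacle here; the only subtlety worth flagging is the use of self-adjointness of $A$ on the vector $v$, which is precisely what the assumption $v \in \mathrm{Dom}[A]$ provides.
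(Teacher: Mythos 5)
Your proof is correct and complete. The paper itself does not reproduce a proof of this result — it simply cites \cite{GJ1} — so there is no in-paper argument to compare against. Your argument is the standard direct verification via characterization (3) of Proposition \ref{Prop629}: observing that $|u\rangle\langle v|$ maps all of $\mathscr{H}$ into the one-dimensional subspace spanned by $u\in\mathrm{Dom}[A]$ (hence preserves $\mathrm{Dom}[A]$), moving $A$ onto $v$ by self-adjointness, and reading off the bounded commutator $[T,A]_\circ = |u\rangle\langle Av| - |Au\rangle\langle v|$. This is precisely the computation one expects and there is no gap.
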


\begin{Lemma} 
\label{hgod}
Assume $V$ satisfies \eqref{short_range33} and \eqref{long_range33} for some $m \in \N$, and $r,q \in [0,+\infty)$ not both zero. Then for any open interval $I \subset \mu(H)$ and any $\eta \in C^{\infty}_c(\R)$ with supp$(\eta) \subset I$, $PE_{I}(H) $ and $P^{\perp}\eta(H) $ $\in \mathcal{C}^1(A)$. 
\end{Lemma}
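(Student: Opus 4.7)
The plan is to handle the two operators $PE_I(H)$ and $P^{\perp}\eta(H)$ separately, reducing both to $\mathcal{C}^1(A)$ statements about (i) $\eta(H)$ itself, and (ii) finite sums of rank-one projectors onto eigenfunctions that lie in $\mathrm{Dom}[A]$.

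\textbf{Step 1: the point part.} By Proposition \ref{Prop13}, the assumptions imply that $H$ has at most finitely many eigenvalues $\lambda_1,\dots,\lambda_n$ in $I\subset\mu(H)$, each of finite multiplicity, with eigenfunctions that decay sub-exponentially. Sub-exponential decay gives $u_i \in \mathrm{Dom}[\langle N\rangle^k]$ for every $k\in\N$, in particular $u_i \in \mathrm{Dom}[\langle N\rangle]$. Combined with the operator inequality $\langle A\rangle^2 \leqslant c_d \langle N\rangle^2$ of Lemma \ref{A<N}, this yields $u_i \in \mathrm{Dom}[A]$. Choose an orthonormal basis $\{u_{i,j}\}$ of each eigenspace; then
\[
PE_I(H) \;=\; \sum_{i,j} |u_{i,j}\rangle\langle u_{i,j}|
\]
is a \emph{finite} sum of rank-one operators with $u_{i,j} \in \mathrm{Dom}[A]$. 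By Proposition \ref{rank one}, each summand is in $\mathcal{C}^1(A)$, so $PE_I(H)\in\mathcal{C}^1(A)$.

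\textbf{Step 2: the function $\eta(H)$.} Since $H = \Delta + V + W$ is bounded and $H \in \mathcal{C}^1(A)$ (shown in Section \ref{VerifyMourre}), and since $\eta \in C^\infty_c(\R)$, the Helffer--Sj\"ostrand formula yields
\[
[\eta(H),A]_{\circ} \;=\; \frac{\i}{2\pi}\int_{\C}\frac{\partial\tilde{\eta}}{\partial\overline{z}}\,(z-H)^{-1}[H,\i A]_{\circ}(z-H)^{-1}\,\d z\wedge\d\overline{z},
\]
an absolutely convergent norm integral because $[H,\i A]_{\circ}$ is bounded and $\tilde\eta$ is a compactly supported almost-analytic extension. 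Hence $\eta(H) \in \mathcal{C}^1(A)$ by Proposition \ref{Prop629}.

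\textbf{Step 3: combine.} Writing $P^{\perp}\eta(H) = \eta(H) - P\eta(H)$, it remains to show that $P\eta(H)\in\mathcal{C}^1(A)$. Since $\mathrm{supp}(\eta)\subset I$ and $\eta$ vanishes at every eigenvalue of $H$ outside $I$, only the finitely many $\lambda_i \in I$ contribute and
\[
P\eta(H) \;=\; \sum_{i,j}\eta(\lambda_i)\,|u_{i,j}\rangle\langle u_{i,j}|,
\]
which by Step 1 is a finite sum of elements of $\mathcal{C}^1(A)$. Therefore $P^{\perp}\eta(H)\in\mathcal{C}^1(A)$, finishing the proof.

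\textbf{Main obstacle.} The only nontrivial input is ensuring that the eigenfunctions $u_i$ lie in $\mathrm{Dom}[A]$. This is where the sub-exponential decay from \cite[Theorem 1.5]{Ma2} (cited in Proposition \ref{Prop13}) is essential, together with the comparison $\langle A\rangle\lesssim\langle N\rangle$ from Lemma \ref{A<N}; without some a priori decay for $u_i$ one could not apply Proposition \ref{rank one}. Everything else is routine given that $H\in\mathcal{C}^1(A)$ and the Helffer--Sj\"ostrand calculus is available.
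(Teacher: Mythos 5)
Your proof is correct and follows essentially the same route as the paper's: finitely many eigenvalues in $I$ of finite multiplicity, eigenfunctions in $\mathrm{Dom}[A]$, Proposition \ref{rank one} for $PE_I(H)$, and the decomposition $P^{\perp}\eta(H)=\eta(H)-P\eta(H)$ (equivalently $\eta(H)-PE_I(H)\eta(H)$, since $\mathrm{supp}(\eta)\subset I$). You merely spell out two details the paper leaves implicit — the Helffer--Sj\"ostrand argument that $\eta(H)\in\mathcal{C}^1(A)$, and the mechanism (sub-exponential decay from \cite[Theorem 1.5]{Ma2} plus Lemma \ref{A<N}) by which the eigenfunctions land in $\mathrm{Dom}[A]$.
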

\begin{proof}
$I \subset \mu(H)$ so the Mourre estimate holds for $H$ and $A$ on $I$. In particular the point spectrum of $H$ is finite on $I$, see \cite[Corollary 7.2.11]{ABG}. Therefore $PE_{I}(H)$ is a finite rank operator. By \cite[Theorem 1.5]{Ma2}, the eigenfunctions of $H$, if any, belong to the domain of $A$. We may therefore apply Proposition \ref{rank one} to get $PE_{I}(H)  \in \mathcal{C}^1(A)$. As for $P^{\perp}\eta (H)$ it is equal to $\eta(H) - P E_{I}(H)\eta(H)$, and so belongs to 
$\mathcal{C}^1(A)$.
\qed
\end{proof}

\begin{Lemma}
\label{Lemma_compact_short_range}
Assume $V$ satisfies \eqref{short_range33} for some $q>r=2$, $m \in \N$. Let $\eta \in C_c^{\infty}(\R)$ be supported on an open interval $I$. Then $\forall \ 0 \leqslant p < q/4$, $M \geqslant m$,
\begin{equation}
\label{eqn:compact Difference1}
(\eta(H)-\eta(\Delta)) w_M ^{2p,1}(A), 
\end{equation}
\begin{equation}
\label{eqn:compact Difference2}
\quad w_M^{2p,1}(A) (\eta(H)-\eta(\Delta)) w_M^{2p,1}(A)
\end{equation}
are compact operators.
\end{Lemma}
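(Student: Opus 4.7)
My plan combines the Helffer--Sjöstrand formula for $\eta$ with the resolvent identity
$$(H-z)^{-1}-(\Delta-z)^{-1} = -(H-z)^{-1}(V+W)(\Delta-z)^{-1},$$
and then passes the logarithmic weights through the $\Delta$-resolvents to isolate a multiplication operator by a sequence tending to zero at infinity (hence compact). Uniform norm convergence of the Helffer--Sjöstrand integral will then upgrade this pointwise compactness to compactness of the whole integral.

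For \eqref{eqn:compact Difference1} I first factor
$$w_M^{2p,1}(A) = \big[w_M^{2p,1}(A)\,w_M^{-2p,-1}(\langle N\rangle)\big]\cdot w_M^{2p,1}(\langle N\rangle),$$
where the bracketed operator is bounded by Lemma \ref{LogAtoN_general}; so it will suffice to show $(\eta(H)-\eta(\Delta))\,w_M^{2p,1}(\langle N\rangle)$ is compact. Via the Helffer--Sjöstrand formula and the resolvent identity this equals
$$-\frac{\i}{2\pi}\int_\C \frac{\partial \tilde\eta}{\partial \overline{z}}\,(H-z)^{-1}(V+W)(\Delta-z)^{-1}\,w_M^{2p,1}(\langle N\rangle)\,dz\wedge d\overline{z}.$$
Next I will commute $w_M^{2p,1}(\langle N\rangle)$ past $(\Delta-z)^{-1}$ using
$$(\Delta-z)^{-1}w_M^{2p,1}(\langle N\rangle) = w_M^{2p,1}(\langle N\rangle)(\Delta-z)^{-1} + (\Delta-z)^{-1}\big[w_M^{2p,1}(\langle N\rangle),\Delta\big](\Delta-z)^{-1},$$
producing a principal term in which $(V+W)\,w_M^{2p,1}(\langle N\rangle)$ appears as a multiplication operator between two bounded resolvents, and a commutator residue. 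The commutator $[\Delta, w_M^{2p,1}(\langle N\rangle)]$ is a bounded operator whose $\ell^\infty$ symbol decays to $0$ (its symbol is the finite-difference of a slowly varying function); composed with $(V+W)$ it gives a compact multiplication operator as well.

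The crucial decay estimate will be that $(V+W)(n)\,w_M^{2p,1}(n)\to 0$ as $|n|\to\infty$. Trivially $W(n)\,w_M^{2p,1}(n) = O\big(|n|^{-1}\log_{M+1}^{2p}(\langle n\rangle)\prod_{k=0}^M\log_k(\langle n\rangle)\big)=o(1)$. For $V$, assumption \eqref{short_range33} with $r=2$ gives, in the critical case $M=m$,
$$V(n)\,w_m^{2p,1}(n) = O\Big(\log_{m+1}^{-q+2p}(|n|)\prod_{k=0}^m \log_k^{-1}(|n|)\Big)=o(1)$$
since $p<q/4<q/2$; for $M>m$ the extra iterated logarithms $\log_k$ ($k>m+1$) are absorbed by $\log_{m+1}^{-q}$ because $q>2$. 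Norm convergence of the Helffer--Sjöstrand integral follows from the elementary bounds $\|(H-z)^{-1}\|,\|(\Delta-z)^{-1}\|\leqslant|y|^{-1}$ together with the standard decay $|\partial\tilde\eta/\partial\overline{z}|=O(|y|^\ell\langle x\rangle^{-1-\ell})$ for $\ell$ arbitrary. The integrand is then norm integrable with compact values, so the integral is compact.

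For \eqref{eqn:compact Difference2} I will carry out the same procedure symmetrically: insert $w_M^{2p,1}(A)$ on both sides, convert each factor to $w_M^{2p,1}(\langle N\rangle)$ via Lemma \ref{LogAtoN_general}, and commute both copies through the resolvents. The central multiplication symbol to be analyzed becomes $w_M^{2p,1}(\langle N\rangle)(V+W)w_M^{2p,1}(\langle N\rangle)$; for $V$ and $M=m$ this is $O(\log_{m+1}^{-q+4p}(|n|))$, which is $o(1)$ \emph{precisely when} $p<q/4$ --- exactly matching the hypothesis of the lemma and exhibiting its sharpness. The main obstacle will be the bookkeeping of the commutator residues arising from two rounds of commutation; each is controlled using that $[\Delta,w_M^{2p,1}(\langle N\rangle)]$ is bounded with decaying symbol, so that after composition with the (already decaying) $(V+W)$ factor the residues remain compact-valued and norm-integrable against $\partial\tilde\eta/\partial\overline{z}$.
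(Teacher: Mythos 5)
Your plan follows essentially the same strategy as the paper's proof: Helffer--Sj\"ostrand formula plus the resolvent identity, commuting the weight through the $\Delta$-resolvent, and extracting compactness from the vanishing symbol $(V+W)\,w_M^{2p,1}(N)$, with norm-convergence of the integral supplied by the estimate $|\partial\tilde\eta/\partial\overline z|\leqslant c_\ell\langle x\rangle^{-1-\ell}|y|^\ell$. The one genuine difference is the order in which you pass from the $A$-weight to the $N$-weight: the paper moves $w_M^{2p,1}(A)$ past $(z-\Delta)^{-1}$ \emph{as is}, using $\Delta\in\mathcal{C}^1(w_M^{2p,1}(A))$ (Proposition \ref{goddam}), and only then inserts $w_M^{2p,1}(N)\,w_M^{-2p,-1}(N)$; you convert to an $N$-weight at the outset via Lemma \ref{LogAtoN_general} and then commute the $N$-weight through the resolvent by a direct finite-difference computation of $[\Delta,w_M^{2p,1}(N)]$. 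Both orderings yield the same two terms (principal term and commutator residue) and both are sound; your route has the small advantage of being a concrete shift-operator calculation rather than invoking abstract $\mathcal{C}^1$-regularity, and your remark that $p<q/4$ is exactly the sharp threshold for \eqref{eqn:compact Difference2}, because the central symbol $w_M^{4p,2}(n)\,V(n)$ vanishes iff $4p<q$, makes explicit something the paper leaves to the reader.

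One slip to repair: your factorization $w_M^{2p,1}(A)=\bigl[w_M^{2p,1}(A)\,w_M^{-2p,-1}(N)\bigr]\cdot w_M^{2p,1}(N)$ places the bounded factor in the \emph{middle} of $(\eta(H)-\eta(\Delta))\bigl[w_M^{2p,1}(A)\,w_M^{-2p,-1}(N)\bigr]w_M^{2p,1}(N)$, so compactness of $(\eta(H)-\eta(\Delta))\,w_M^{2p,1}(N)$ does not transfer as stated. Factor instead $w_M^{2p,1}(A)=w_M^{2p,1}(N)\cdot\bigl[w_M^{-2p,-1}(N)\,w_M^{2p,1}(A)\bigr]$, with the bounded factor (the adjoint of the operator in Lemma \ref{LogAtoN_general}) on the right; then $(\eta(H)-\eta(\Delta))\,w_M^{2p,1}(A)$ is the compact $(\eta(H)-\eta(\Delta))\,w_M^{2p,1}(N)$ composed on the right with a bounded operator, and the reduction is legitimate. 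With that one correction, the argument goes through.
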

\begin{proof}
We prove the first one and leave the second one for the reader. By Proposition \ref{goddam}, $\Delta \in \mathcal{C}^1(w_M^{2p,1}(A))$, since $w_M^{2p,1}(x) \in \mathcal{S}^{\epsilon}(\R)$, $\forall \epsilon >0$, and $\Delta \in \mathcal{C}^1(A)$. Thus $[\Delta,w_M^{2p,1}(A)]_{\circ} \in \mathscr{B}(\mathscr{H})$. 
By the Helffer-Sj{\"o}strand formula and the resolvent identity, $(\eta(H)-\eta(\Delta)) w_M^{2p,1}(A) $ equals
\begin{align*}
&\frac{\i}{2\pi} \int_{\C} \frac{\partial \tilde{\eta}}{\partial \overline{z}} (z-H)^{-1}(V+W)(z-\Delta)^{-1} w_M^{2p,1}(A) \dz \\ 
&= \frac{\i}{2\pi} \int _{\C} \frac{\partial \tilde{\eta}}{\partial \overline{z}} (z-H)^{-1}(V+W) w_M ^{2p,1}(A) (z-\Delta)^{-1} \dz \\
& \quad + \frac{\i}{2\pi} \int _{\C} \frac{\partial \tilde{\eta}}{\partial \overline{z}} (z-H)^{-1}(V+W) [(z-\Delta)^{-1},w_M ^{2p,1}(A) ]_{\circ}  \dz 
\end{align*}
\begin{align*}
&= \frac{\i}{2\pi} \int _{\C} \frac{\partial \tilde{\eta}}{\partial \overline{z}} (z-H)^{-1} \left( \underbracket{V w_M ^{2p,1} (N) }_{\text{compact by \eqref{short_range33}}} + \underbracket{W w_M ^{2p,1} (N) }_{\text{compact}} \right) \underbracket{ w_M ^{-2p,-1}(N) w_M ^{2p,1}(A) }_{\text{bounded by Lemma \ref{LogAtoN_general}}} (z-\Delta)^{-1} \dz \\
& \quad + \frac{\i}{2\pi} \int _{\C} \frac{\partial \tilde{\eta}}{\partial \overline{z}} (z-H)^{-1}\underbracket{(V+W)}_{\text{compact}} (z-\Delta)^{-1}[\Delta,w_M ^{2p,1}(A) ]_{\circ}(z-\Delta)^{-1}  \dz.
\end{align*}
The integrands of the last two integrals are compact operators. With the support of $\eta$ bounded, the integrals converge in norm, and so the compactness is preserved in the limit. 
\qed
\end{proof}

\begin{Lemma}
\label{Lemma_forProof}
Let $R>1$ and recall that $w_{M} ^{\alpha,\beta}  \left( A/R \right)$ is given by \eqref{weightsW2}. Consider 
$$\tilde{K_0} = \eD [V, \i A]_{\circ} \eD + (\eH -\eD) [H, \i A]_{\circ} \eD + \eH [H, \i A ]_{\circ}(\eH -\eD),$$
and
$$K_0 := \tilde{K_0} + \eD [W, \i A]_{\circ} \eD  = \tilde{K_0} + \eD K_W \eD + \eD B_W \eD,$$
where $\eta \in C_c ^{\infty} (\R)$ is supported on an open interval $I \Subset [0,4d] \setminus \boldsymbol{\Theta}(\Delta)$. Assume $V$ satisfies assumptions \eqref{short_range33} and \eqref{long_range33} for some $q > r = 2$, $m \in \N$. Then for all $ 1/2 < p < q/4$, $M \geqslant m$, $w_M ^ {2p, 1} \left( A/R \right) \tilde{K_0} w_M ^{2p, 1} \left( A/R \right)$ is a compact operator whose norm is uniformly bounded w.r.t.\ $R$. Furthermore, if $I \subset \mu(H)$, then further shrinking the size of the interval $I$ also allows \\ $w_M ^ {2p, 1} \left( A/R \right) K_0 w_M ^{2p, 1} \left( A/R \right)$ to be a compact operator whose norm is uniformly bounded w.r.t.\ $R$.
\end{Lemma}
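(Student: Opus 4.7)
The proof will decompose $K_0 = \tilde{K_0} + \eta(\Delta) K_W \eta(\Delta) + \eta(\Delta) B_W \eta(\Delta)$ into five pieces: the diagonal contribution $\eta(\Delta)[V,\i A]_\circ \eta(\Delta)$, the two cross contributions $(\eta(H)-\eta(\Delta))[H,\i A]_\circ \eta(\Delta)$ and $\eta(H)[H,\i A]_\circ (\eta(H)-\eta(\Delta))$, plus $\eta(\Delta) K_W \eta(\Delta)$ and $\eta(\Delta) B_W \eta(\Delta)$. For each piece I will show that conjugation by $w_{M}^{2p,1}(A/R)$ produces a compact operator whose operator norm is uniformly bounded in $R>1$.

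\noindent \textbf{Step 1: the diagonal term.} I would use the Helffer--Sj\"ostrand formula to write $\eta(\Delta) = \frac{\i}{2\pi}\int_{\C} \frac{\partial\tilde{\eta}}{\partial\bar z}(z-\Delta)^{-1}\,dz\wedge d\bar z$ and commute $w_{M}^{2p,1}(A/R)$ through $(z-\Delta)^{-1}$. Since $\Delta \in \mathcal{C}^\infty(A)$, the commutator contributions are controlled using an almost analytic extension of $w_{M}^{2p,1}$ combined with Lemma \ref{infamousLemma3}, and uniform-in-$R$ control comes from the pointwise bound $\log_k(\langle t/R\rangle) \leqslant \log_k(\langle t\rangle)$ valid for $R\geqslant 1$. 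One is then reduced to the core $\eta(\Delta)\cdot w_{M}^{2p,1}(A/R)^2 [V,\i A]_\circ \cdot \eta(\Delta)$, which I transfer to an $N$-side bound via a scaled version of Lemma \ref{LogAtoN_general}. Using \eqref{commutatorV} and hypothesis \textbf{(H)} with $r=2$, the combined multiplier is dominated by
\begin{equation*}
\log_{M+1}^{4p}(\langle n/R\rangle)\prod_{k=1}^M \log_k^{2}(\langle n/R\rangle)\cdot O\!\left(\log_{m+1}^{-q}(|n|)\prod_{k=1}^m \log_k^{-2}(|n|)\right).
\end{equation*}
Since $\log_{M+1}\leqslant \log_{m+1}$ and $\log_j(x)=o(\log_{m+1}^{\epsilon}(x))$ for every $j\geqslant m+2$ and every $\epsilon>0$, this is $o(\log_{m+1}^{4p-q+\epsilon}(|n|))$, which vanishes at infinity (pick $\epsilon$ small and recall $4p<q$). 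Hence the multiplier defines a compact operator with norm uniform in $R$.

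\noindent \textbf{Step 2: cross and Wigner--von Neumann terms.} For the cross terms I would use $\eta(H)-\eta(\Delta) = \frac{\i}{2\pi}\int_{\C} \frac{\partial\tilde{\eta}}{\partial\bar z}(z-H)^{-1}(V+W)(z-\Delta)^{-1}\,dz\wedge d\bar z$ to exhibit the compact/decaying factor $V+W$ at the core, commute $w_{M}^{2p,1}(A/R)$ through the resolvents of $H$ and $\Delta$ (both in $\mathcal{C}^1(A)$), and close the estimate as in the diagonal case. For $\eta(\Delta) K_W \eta(\Delta)$, the factor $K_W$ inherits the $|n|^{-1}$-decay of $W$ at its core, which overpowers the polylogarithmic growth of $w_{M}^{2p,1}$ by the same mechanism as in Step 1. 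For $\eta(\Delta) B_W \eta(\Delta)$, I shrink $I$ further so that Proposition \ref{prop45} (or Lemma \ref{ImportantLemma} when $d=1$) gives $\eta(\Delta)\tilde{W}\eta(\Delta)=0$; introducing commutators of $\tilde{W}$ with the shifts $S_i^{*}-S_i$ and with $\eta(\Delta)$, and using the boundedness of $U_i$, recovers compactness, again uniformly in $R$.

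\noindent \textbf{Expected main obstacle.} The principal technical difficulty is executing these commutator expansions \emph{uniformly in} $R>1$, given that any almost analytic extension of the logarithmic weight $w_{M}^{2p,1}$ loses all logarithmic decay away from the real axis. Lemma \ref{infamousLemma3} is the tool tailored to survive this loss. A secondary subtlety is the decay estimate above when $M>m$, which hinges on the fact that higher iterated logarithms grow arbitrarily more slowly than any positive power of $\log_{m+1}$, so that the excess factors $\prod_{k=m+1}^{M}\log_k^{2}\cdot\log_{M+1}^{4p}$ can be absorbed into an arbitrarily small positive power of $\log_{m+1}$.
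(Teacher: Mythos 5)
Your proposal follows essentially the same route as the paper: break $K_0$ into the same five pieces, expand commutators with the Helffer--Sj\"ostrand formula, transfer logarithmic weights from $A$ to $N$ via the operator-monotonicity lemmas, and then kill the transferred weight with the decay from hypothesis~\textbf{(H)} with $r=2$; the $B_W$ and $K_W$ terms are handled exactly as in the paper by shrinking $I$ so that $\eta(\Delta)\tilde W\eta(\Delta)=0$. Two points where the paper is tighter are worth noting. First, the paper disposes of the $R$-uniformity immediately by the factorization $w_M^{2p,1}(A/R)\,\tilde K_0\,w_M^{2p,1}(A/R) = \bigl(w_M^{2p,1}(A/R)\,w_M^{-2p,-1}(A)\bigr)\,\tilde K\,\bigl(w_M^{-2p,-1}(A)\,w_M^{2p,1}(A/R)\bigr)$ with $\tilde K := w_M^{2p,1}(A)\,\tilde K_0\,w_M^{2p,1}(A)$; the outer prefactors are uniformly bounded in $R\geqslant 1$ by monotonicity, and everything that follows is then $R$-free, so that Proposition~\ref{goddam} and Lemma~\ref{LogAtoN_general} can be applied verbatim. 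You instead carry $A/R$ through the whole argument and appeal to unstated ``scaled versions'' of Lemma~\ref{LogAtoN_general} and of the commutator bound of Proposition~\ref{goddam}; these do hold uniformly in $R$ (for the same monotonicity reason you quote), but they are not established in the paper, so you should either prove them or, more economically, adopt the paper's prefactor trick. Second, the displayed core $\eta(\Delta)\,w_M^{2p,1}(A/R)^2[V,\i A]_\circ\,\eta(\Delta)$ is not an identity: the weights do not commute with $[V,\i A]_\circ$, so the object one actually bounds is $w_M^{2p,1}(A/R)\,\eta(\Delta)[V,\i A]_\circ\eta(\Delta)\,w_M^{2p,1}(A/R)$ with a weight on each side (as the paper does via $w_M^{2p,1}(N)[V,\i A]_\circ w_M^{2p,1}(N)$ after the $A\to N$ transfer); your ``square'' is only a heuristic for the eventual multiplier decay rate, which is otherwise computed correctly.
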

\begin{proof}
Write $w_M ^ {2p, 1} \left( A/R \right) \tilde{K_0} w_M ^{2p, 1} \left( A/R \right)$ as
\begin{equation*}
\underbracket{w_M ^ {2p, 1}  \left( \frac{A}{R} \right) w_M ^ {-2p,-1} (A)}_{\text{bounded uniformly in } R}  \tilde{K}  \underbracket{ w_M  ^{-2p,-1} (A) w_M ^{2p,1}  \left( \frac{A}{R} \right)}_{\text{bounded uniformly in } R}, \quad \tilde{K}  :=  w_M  ^{2p,1}  (A)   \tilde{K_0}  w_M ^{2p,1} (A).
\end{equation*}
Thus we want to show that $\tilde{K} $ is a compact operator. $\tilde{K} $ is the sum of the following 3 terms : 
\begin{equation*}
\tilde{K_1}  := w_M  ^{2p,1} (A)  \eD [V,\i A]_{\circ} \eD  w_M  ^{2p,1} (A) ,
\end{equation*}
\begin{equation*}
\tilde{K_2}  := w_M ^{2p,1} (A) (\eta(H)-\eta(\Delta)) [H,\i A]_{\circ} \eD w_M  ^{2p,1} (A),  
\end{equation*}
\begin{equation*}
\tilde{K_3}  := w_M ^{2p,1} (A)  \eH [H,\i A]_{\circ} (\eta(H)-\eta(\Delta))  w_M  ^{2p,1} (A).   
\end{equation*}
Each of these terms are compact, as explained below. \\
\noindent $\bullet$ For $\tilde{K_1}$ : we want to commute $w_M  ^{2p,1} (A)$ with $\eD$. Since $\eD \in \mathcal{C}^1(A)$ and $w_M  ^{2p,1} (x) \in \mathcal{S}^{\epsilon}(\R)$, $\forall \epsilon > 0$, $[w_M  ^{2p,1} (A), \eD]_{\circ} \in \mathscr{B}(\mathscr{H})$ by Proposition \ref{goddam}. Also, $w_M  ^{2p,1} (A) w_M ^{-2p,-1} (N) \in \mathscr{B}(\mathscr{H})$ by Lemma \ref{LogAtoN_general}, and $w_M ^{2p,1} (N) [V, \i A]_{\circ} w_M  ^{2p,1}(N)$ is a compact operator, by the assumption \eqref{long_range33} ($1/2 < p < q/4$). Thus $\tilde{K_1}$ is a compact operator. 

\noindent $\bullet$ For $\tilde{K_2}$ : Write it as 
$$w_M  ^{2p,1} (A) (\eta(H)-\eta(\Delta)) [\Delta,\i A]_{\circ} \eD w_M  ^{2p,1} (A) + w_M ^{2p,1} (A) (\eta(H)-\eta(\Delta)) [V,\i A]_{\circ} \eD w_M ^{2p,1} (A).$$  
For the first term commute $[\Delta,\i A]_{\circ} \eD$ with $w_M ^{2p,1} (A)$ and then apply \eqref{eqn:compact Difference2}. For the second term commute $\eD$ with $w_M ^{2p,1} (A)$ and then apply \eqref{eqn:compact Difference1} and assumption \eqref{short_range33}. We leave the details to the reader.

\noindent $\bullet$ For $\tilde{K_3}$ : same idea as for $\tilde{K_2}$.

Now the second part of the statement where we assume $I \subset \mu(H)$. Recall $K_W$ and $B_W$ are given by \eqref{KW} and \eqref{BW}. It is enough to show that 
\begin{equation*}
\tilde{K_4}  := w_M ^{2p,1} (A) \eD K_W \eD w_M  ^{2p,1} (A)
\end{equation*}
and
\begin{equation*}
\tilde{K_5}  := w_M ^{2p,1} (A) \eD B_W \eD  w_M  ^{2p,1} (A) 
\end{equation*}
are compact. For $\tilde{K_4}$, commute $w_M ^{2p,1} (A)$ with $\eD$ and use the fact that $w_M ^{2p,1} (A) K_W w_M ^{2p,1} (A)$ is compact. For $\tilde{K_5}$, 
it is enough to prove that $w_M ^{2p,1} (A) \eD U_i \tilde{W} (S_i^* -S_i) \eD  w_M  ^{2p,1} (A)$ is compact. By Lemma \ref{ImportantLemma} and Proposition \ref{prop45}, $\eD \tilde{W} \eD = 0$ if the the support of $\eta$ is sufficiently small. So it is enough to prove that $w_M ^{2p,1} (A) [\eta(\Delta), U_i]_{\circ} \tilde{W} (S_i^*-S_i) \eD w_M ^{2p,1} (A)$ is compact. This can be gleaned from the following grouping
$$\underbracket{w_M ^{2p,1} (A) w_M ^{-2p,-1} (N)}_{\text{bounded by Lemma \ref{LogAtoN_general}}} \underbracket{w_M ^{2p,1} (N) [\eta(\Delta), U_i]_{\circ} w_M ^{2p,1} (N) }_{\text{compact}} \underbracket{w_M ^{-2p,-1} (N)  \tilde{W} (S_i^*-S_i) \eD w_M ^{2p,1} (A)}_{\text{bounded by Lemma \ref{LogAtoN_general}}}.$$
\qed
\end{proof}

\section{Proof of Theorem \ref{lapy3}}
\label{PROOF}

We are now ready to prove the \emph{projected weighted Mourre estimate} \eqref{general_weighted}, which in turn will imply the LAP \eqref{general_LAP}. The proof makes use of almost analytic extensions of $C^{\infty}(\R)$ bounded functions and the class of functions $\mathcal{S}^{\rho}(\R)$ with $\rho = 0$, see Appendix \ref{Helffer-Appendix}. We also mention that the proof is essentially the same as that of \cite[Theorem 4.15]{GJ2} (see also \cite[Theorem 5.4]{Ma1}), but we display it in detail for the reader's convenience. Please note that throughout all this Section we work with the Hamiltonian $H$, but the same developments work for $H_2$.



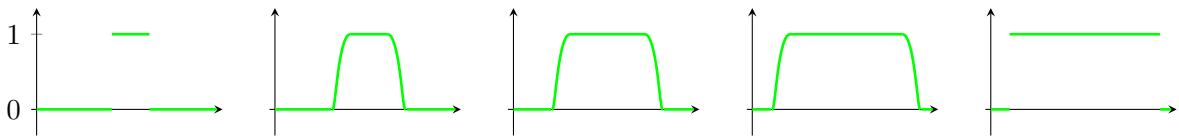
\begin{figure}[h]
\begin{tikzpicture}[domain=0.01:2.5]
\begin{axis}
[
clip = true, 
clip mode=individual, 
axis x line = middle, 
axis y line = middle, 
xticklabels={},
xtick style ={draw=none},
ylabel style={at=(current axis.above origin), anchor=south}, 
y=1cm,
x=1cm,
ytick={-3,-2,-1,0,1,2,3},
enlarge y limits={rel=0.03}, 
enlarge x limits={rel=0.03}, 
ymin = -0.3, 
ymax = 1.3, 
after end axis/.code={\path (axis cs:0,0) node [anchor=east,xshift=-0.075cm] {0};}]

\addplot[color=green,line width = 1.0 pt, samples=400,domain=1.00:1.5] ({x},{1});
\addplot[color=green,line width = 1.0 pt, samples=400,domain=0:1] ({x},{0});
\addplot[color=green,line width = 1.0 pt, samples=400,domain=1.5:2.4] ({x},{0});
\end{axis}
\end{tikzpicture}
\hspace{0.1cm}
\begin{tikzpicture}[domain=0.01:2.5]
\begin{axis}
[
clip = true, 
clip mode=individual, 
axis x line = middle, 
axis y line = middle, 
xticklabels={},
xtick style ={draw=none},
yticklabels={},
ytick style ={draw=none},
y=1cm,
x=1cm,
enlarge y limits={rel=0.03}, 
enlarge x limits={rel=0.03}, 
ymin = -0.3, 
ymax = 1.3, 
after end axis/.code={\path (axis cs:0,0) ;}]

\addplot[color=green,line width = 1.0 pt, samples=200,domain=0.76:1] ({x},{2.714*exp(-1/(1-16*(x-1)*(x-1)))});
\addplot[color=green,line width = 1.0 pt, samples=200,domain=1.5:1.74] ({x},{2.714*exp(-1/(1-16*(x-1.5)*(x-1.5)))});
\addplot[color=green,line width = 1.0 pt, samples=400,domain=1.00:1.5] ({x},{1});
\addplot[color=green,line width = 1.0 pt, samples=400,domain=0:0.75] ({x},{0});
\addplot[color=green,line width = 1.0 pt, samples=400,domain=1.75:2.4] ({x},{0});
\end{axis}
\end{tikzpicture}
\hspace{0.1cm}
\begin{tikzpicture}[domain=0.01:2.5]
\begin{axis}
[
clip = true, 
clip mode=individual, 
axis x line = middle, 
axis y line = middle, 
xticklabels={},
xtick style ={draw=none},
yticklabels={},
ytick style ={draw=none},
y=1cm,
x=1cm,
enlarge y limits={rel=0.03}, 
enlarge x limits={rel=0.03}, 
ymin = -0.3, 
ymax = 1.3, 
after end axis/.code={\path (axis cs:0,0) ;}]

\addplot[color=green,line width = 1.0 pt, samples=400,domain=0.51:0.75] ({x},{2.714*exp(-1/(1-16*(x-0.75)*(x-0.75)))});
\addplot[color=green,line width = 1.0 pt, samples=400,domain=1.75:1.99] ({x},{2.714*exp(-1/(1-16*(x-1.75)*(x-1.75)))});
\addplot[color=green,line width = 1.0 pt, samples=400,domain=0.75:1.75] ({x},{1});
\addplot[color=green,line width = 1.0 pt, samples=400,domain=0:0.5] ({x},{0});
\addplot[color=green,line width = 1.0 pt, samples=400,domain=2:2.4] ({x},{0});
\end{axis}
\end{tikzpicture}
\hspace{0.1cm}
\begin{tikzpicture}[domain=0.01:2.5]
\begin{axis}
[
clip = true, 
clip mode=individual, 
axis x line = middle, 
axis y line = middle, 
xticklabels={},
xtick style ={draw=none},
yticklabels={},
ytick style ={draw=none},
y=1cm,
x=1cm,
enlarge y limits={rel=0.03}, 
enlarge x limits={rel=0.03}, 
ymin = -0.3, 
ymax = 1.3, 
after end axis/.code={\path (axis cs:0,0) ;}]

\addplot[color=green,line width = 1.0 pt, samples=400,domain=0.26:0.5] ({x},{2.714*exp(-1/(1-16*(x-0.5)*(x-0.5)))});
\addplot[color=green,line width = 1.0 pt, samples=400,domain=2:2.24] ({x},{2.714*exp(-1/(1-16*(x-2)*(x-2)))});
\addplot[color=green,line width = 1.0 pt, samples=400,domain=0.5:2] ({x},{1});
\addplot[color=green,line width = 1.0 pt, samples=400,domain=0:0.25] ({x},{0});
\addplot[color=green,line width = 1.0 pt, samples=400,domain=2.25:2.4] ({x},{0});
\end{axis}
\end{tikzpicture}
\hspace{0.1cm}
\begin{tikzpicture}[domain=0.01:2.5]
\begin{axis}
[
clip = true, 
clip mode=individual, 
axis x line = middle, 
axis y line = middle, 
xticklabels={},
xtick style ={draw=none},
yticklabels={},
ytick style ={draw=none},
y=1cm,
x=1cm,
ytick={-3,-2,-1,0,1,2,3},
enlarge y limits={rel=0.03}, 
enlarge x limits={rel=0.03}, 
ymin = -0.3, 
ymax = 1.3, 
after end axis/.code={\path (axis cs:0,0) ;}]
\addplot[color=green,line width = 1.0 pt, samples=400,domain=0.25:2.25] ({x},{1});
\addplot[color=green,line width = 1.0 pt, samples=400,domain=0:0.25] ({x},{0});
\addplot[color=green,line width = 1.0 pt, samples=400,domain=2.25:2.4] ({x},{0});
\end{axis}
\end{tikzpicture}
\caption{From left to right : $E_J(x)$, $\theta(x)$, $\eta(x)$, $\chi(x)$, $E_I(x)$.}
\label{fig:1}
\end{figure}


\noindent \textit{Proof of Theorem \ref{lapy3}.}
$E \in \mu(H)$ so let $I \subset \mu(H)$ be an open interval containing $E$ such that the Mourre estimate holds over $I$. By Lemma \ref{hgod}, $PE_{I}(H) $ and $P^{\perp}\eta(H) $ are of class $\mathcal{C}^1(A)$, for any $\eta \in C^{\infty}_c(\R)$ with supp$(\eta) \subset I$. Let $\theta,\eta,\chi \in C^{\infty}_c (\R)$ be bump functions such that $\eta \theta = \theta$, $\chi \eta = \eta$ and supp$(\chi) \subset I$, see Figure \ref{fig:1}. Initially $J$, $\theta$ and $\eta$ are chosen and fixed so that one may apply Lemma \ref{ImportantLemma} and Proposition \ref{prop45}. As for $\chi$ and $I$ they are chosen to fulfill the conditions drawn in Figure  \ref{fig:1}. Later in the proof we will shrink the interval $I$ around $E$ (and thereby possibly $J$ as well).

We aim to derive \eqref{general_weighted} on $I$ for $B= \varphi(A/R)$, $C = \sqrt{\varphi'(A/R)}$, some $R>1$ and $\varphi$ given by 
\begin{equation}
\label{new_varphi_again}
\varphi : \R \mapsto \R, \quad \varphi : t \mapsto \int_{-\infty} ^t \frac{dx}{ \langle x \rangle w_M ^{2p, 1}(x) }, \quad t \in \R, \quad p > 1/2.
\end{equation}
Note that $\varphi \in \mathcal{S}^{0}(\R)$, so that $\varphi(A/R) \in \mathscr{B}(\mathscr{H})$ for all $R>1$. Please note that it is sufficient to obtain the LAP \eqref{LAP_this_article2} for all $1/2 < p \leq p_0$, for some $p_0 >1/2$, and only $M=m$, because then the extension to all $p>1/2$ and $M \geq m$ is automatic. Consider the bounded operator 
\begin{align*}
F & := \Pp \tH [H, \i \varphi(A/R)]_{\circ} \tH \Pp \\
&= \const \frac{1}{R} \int_{\C} \pp \Pp \tH (z-A/R)^{-1} [H, \i A]_{\circ} (z-A/R)^{-1} \tH \Pp \dz.
\end{align*}
To save room, henceforth we drop the element $\dz$. By Lemma \ref{hgod} $\Pp \eH \in \mathcal{C}^1(A)$, so 
\begin{equation*}
[\Pp \eH, (z-A/R)^{-1}]_{\circ} = (z-A/R)^{-1} [\Pp \eH, A/R]_{\circ} (z-A/R)^{-1}.
\end{equation*} 
Next to $\Pp \tH$ we introduce $\Pp\eH$ and commute it with $(z-A/R)^{-1}$. We get :
\begin{align*}
F &= \const \frac{1}{R} \int_{\C} \pp \Pp \tH \A \Pp \underbracket{ \eH [H, \i A]_{\circ}  \eH }\Pp \A \tH \Pp \\
&+ \Pp \tH \JapA ^{-\frac{1}{2}} w_M ^{-p,-\frac{1}{2}} \left( \frac{A}{R} \right) \cdot \frac{B_1}{R^2} \cdot w_M ^{-p,-\frac{1}{2}} \left( \frac{A}{R} \right) \JapA ^{-\frac{1}{2}}   \tH \Pp,
\end{align*}
where $B_1 \in \mathscr{B}(\mathscr{H})$ is uniformly bounded in $R$. We refer to \cite[Proof of Theorem 5.4]{Ma1} for extra details. Next to each $\eta(H)$ we insert $\chi(H)$, and then decompose $\eH [H, \i A ]_{\circ} \eH$ as follows :
$$\eH [H, \i A]_{\circ} \eH = \eD [\Delta, \i A]_{\circ} \eD  + K_0,$$
where
\begin{equation*}
K_0 = \eD [V+W, \i A]_{\circ} \eD + (\eH -\eD) [H, \i A]_{\circ} \eD + \eH [H, \i A ]_{\circ}(\eH -\eD).
\end{equation*}
Note that $K_0$ is the compact operator that appears in Lemma \ref{Lemma_forProof}. We also let 
$$ M_0 := \Pp \chi(H) \eD [\Delta, \i A]_{\circ} \eD \chi(H) \Pp.$$
Thus :
\begin{align}
F &= \const \frac{1}{R} \int_{\C} \pp \Pp \tH \A M_0 \A \tH \Pp \nonumber \\
\label{F--2}
&+ \const \frac{1}{R} \int_{\C} \pp \Pp \tH \A \Pp \chi(H) K_0 \chi(H) \Pp \A \tH \Pp \\
&+ \Pp \tH \JapA ^{-\frac{1}{2}} w_M ^{-p,-\frac{1}{2}} \left( \frac{A}{R} \right) \cdot \frac{B_1}{R^2} \cdot w_M ^{-p,-\frac{1}{2}} \left( \frac{A}{R} \right) \JapA ^{-\frac{1}{2}}   \tH \Pp. \nonumber
\end{align}
We write the second term in \eqref{F--2} as
$$R^{-1} \langle A/R \rangle ^{-1/2} w_M^{-p,-1/2}(A/R)  \mathcal{K} w_M^{-p,-1/2}(A/R) \langle A/R \rangle ^{-1/2}$$
where 
\begin{equation*}
\mathcal{K} := \const \int_{\C} \pp \JapA ^{\frac{1}{2}} w_M ^{p,\frac{1}{2}} \left( \frac{A}{R} \right) \A \Pp \chi(H) K_0 \chi(H) \Pp \A w_M ^{p,\frac{1}{2}}   \left( \frac{A}{R} \right)  \JapA ^{\frac{1}{2}}.
\end{equation*}

\noindent \underline{{\footnotesize CLAIM.}}

$\mathcal{K}$ is a compact operator for $1/2 < p < q/4$, where $q$ is the exponent that appears in \eqref{short_range33} and \eqref{long_range33} -- Hypothesis ($\mathbf{H}$). Also the norm of $\mathcal{K}$ goes to zero as the support of $\chi$ gets tighter around $E$, and this uniformly in $R$. \\
\noindent {\footnotesize PROOF OF CLAIM.}  Write $\mathcal{K}$ as 

\begin{align*}
\mathcal{K} =  \const \int_{\C} \pp  & \underbracket{\JapA ^{\frac{1}{2}} w_M ^{-p,-\frac{1}{2}}  \left( \frac{A}{R} \right) \A }_{\text{bounded by Lemma \ref{infamousLemma3}}} \bigtimes \\
& w_M ^{2p,1} \left( \frac{A}{R} \right)  \Pp \chi(H) K_0 \chi(H) \Pp \underbracket{ \A w_M ^{p,\frac{1}{2}}   \left( \frac{A}{R} \right)  \JapA ^{\frac{1}{2}} }_{\text{bounded by Lemma \ref{infamousLemma3}}}.
\end{align*} 
Now we want to commute $w_M ^{2p,1}  \left( \frac{A}{R} \right)$ with $ \Pp \chi(H)$.  Since $\Pp \chi(H) \in \mathcal{C}^1(A)$ and $w_M ^{2p,1} (x) \in \mathcal{S}^{\epsilon}(\R)$, $\forall \epsilon > 0$, 
$$w_M ^{2p,1} \left( \frac{A}{R} \right) \Big [w_M ^{2p,1} \left(\frac{A}{R}\right), \Pp \chi(H) \Big]_{\circ} \in \mathscr{B}(\mathscr{H})$$
by Proposition \ref{goddam}. So $\mathcal{K} = \mathcal{K} _1 + \mathcal{K} _2$, where

\begin{align*}
\mathcal{K}_1 =  \const \int_{\C} \pp  & \underbracket{\JapA ^{\frac{1}{2}} w_M ^{-p,-\frac{1}{2}}  \left( \frac{A}{R} \right) \A }_{\text{bounded by Lemma \ref{infamousLemma3}}} \bigtimes \\
&   \Pp \chi(H) w_M ^{2p,1}  \left( \frac{A}{R} \right) K_0 \chi(H) \Pp \underbracket{ \A w_M ^{p,\frac{1}{2}}   \left( \frac{A}{R} \right)  \JapA ^{\frac{1}{2}} }_{\text{bounded by Lemma \ref{infamousLemma3}}} 
\end{align*}
and
\begin{align*}
\mathcal{K}_2 =  &  \const \int_{\C} \pp  \underbracket{\JapA ^{\frac{1}{2}} w_M ^{-3p,-\frac{3}{2}}  \left( \frac{A}{R} \right) \A }_{\text{bounded by Lemma \ref{infamousLemma3}}} \bigtimes \\
& \underbracket{ w_M ^{2p,1}  \left( \frac{A}{R} \right)  \Big [ w_M ^{2p,1} \left( \frac{A}{R} \right) , \Pp \chi(H) \Big ] _{\circ} }_{\text{bounded uniformly in } R \text{ by Proposition \ref{goddam}}} K_0 \chi(H) \Pp \underbracket{ \A w_M ^{p,\frac{1}{2}}   \left( \frac{A}{R} \right)  \JapA ^{\frac{1}{2}} }_{\text{bounded by Lemma \ref{infamousLemma3}}}. 
\end{align*}
By Lemma \ref{infamousLemma3}, there is $C>0$ such that
$$\Big \| \JapA ^{\frac{1}{2}} w_M ^{-3p,-\frac{3}{2}}  \left( \frac{A}{R} \right) \A \Big \| \leqslant C w_M ^{-3p,-\frac{3}{2}} (x) \langle x \rangle ^{\frac{1}{2}} |y|^{-1}, $$
and 
$$\Big \| \JapA ^{\frac{1}{2}} w_M ^{p,\frac{1}{2}}  \left( \frac{A}{R} \right) \A \Big \| \leqslant C w_M ^{p,\frac{1}{2}} (x) \langle x \rangle ^{\frac{1}{2}} |y|^{-1}.$$
Now apply Lemma \ref{infamousLemma3} and \eqref{continuationFormula} with $\rho = 0$ and $\ell=2$. Note that $\| \mathcal{K}_2\|$ is bounded above by a multiple of $\| K_0 \chi(H) \Pp \|$ times
$$\int_{\overline{\Omega}} \underbracket{\langle x \rangle ^{\rho - 1 - \ell} |y|^{\ell}} \cdot \underbracket{w_M ^{-3p,-\frac{3}{2}} (x) \langle x \rangle ^{\frac{1}{2}} |y|^{-1}} \cdot  \underbracket{w_M ^{p,\frac{1}{2}} (x) \langle x \rangle ^{\frac{1}{2}} |y|^{-1}} dxdy = \int_{\overline{\Omega}} \frac{dx dy}{\langle x \rangle ^2 w_M ^{2p, 1}(x)} < \infty,$$ 
thanks to $p>1/2$. Because $K_0$ is compact, it follows that $\mathcal{K}_2$ is a compact operator and $\| \mathcal{K}_2 \|$ goes to zero as the support of $\chi$ gets tighter around $E$. We repeat a similar operation for $\mathcal{K}_1$ : write $w_M ^{p, \frac{1}{2}}  \left( \frac{A}{R} \right) = w_M ^{2p,1}  \left( \frac{A}{R} \right) w_M ^{-p,-\frac{1}{2}}  \left( \frac{A}{R} \right)$ and commute $w_M ^{2p,1}  \left( \frac{A}{R} \right)$ with $\chi (H) \Pp$. Doing this gives
\begin{align*}
\mathcal{K}_1 = & \const \int_{\C} \pp  \underbracket{\JapA ^{\frac{1}{2}} w_M ^{-p,-\frac{1}{2}}  \left( \frac{A}{R} \right) \A }_{\text{bounded by Lemma \ref{infamousLemma3}}} \bigtimes \\
&   \Pp \chi(H) \underbracket{ w_M ^{2p,1} \left( \frac{A}{R} \right) K_0 w_M ^{2p,1}   \left( \frac{A}{R} \right) }_{\text{compact by Lemma \ref{Lemma_forProof} }} \chi(H) \Pp \underbracket{ \A w_M ^{-p,-\frac{1}{2}}   \left( \frac{A}{R} \right)  \JapA ^{\frac{1}{2}} }_{\text{bounded by Lemma \ref{infamousLemma3}}} \ + \  \underbracket{\mathcal{K}_3}_{\text{compact}}.
\end{align*}
One shows that $\| \mathcal{K}_1 \| \leqslant c  \| \Pp \chi(H) w_M ^{2p,1}  \left( A/R \right) K_0  w_M ^{2p,1} \left( A/R \right)\|$ for some constant $c>0$ and goes to zero as the support of $\chi$ gets tighter around $E$, notably because $w_M ^{2p,1}  \left( A/R \right) K_0 w_M ^{2p,1} \left( A/R \right)$ is compact by Lemma \ref{Lemma_forProof}. 
As for $\mathcal{K}_3$, it is compact and satisfies $\| \mathcal{K}_3 \| \leqslant c \| \Pp \chi(H) w_M ^{2p,1}  \left( A/R \right) K_0  \|$ for some constant $c>0$. All in all, we see that $\mathcal{K}$ is a compact operator such that $\| \mathcal{K}\|$ goes to zero as the support of $\chi$ gets tighter around $E$, uniformly in $R$. This proves the claim. \\

We now proceed with the proof of the Theorem. Thanks to the claim we have :
\begin{align*}
F &= \const \frac{1}{R} \int_{\C} \pp \Pp \tH \A M_0 \A \tH \Pp  \\
&+  \Pp \tH \JapA ^{-\frac{1}{2}} w_M ^{-p,-\frac{1}{2}}  \left( \frac{A}{R} \right) \left( \frac{B_1}{R^2} + \frac{\mathcal{K}}{R} \right) w_M^{-p,-\frac{1}{2}}  \left( \frac{A}{R} \right) \JapA ^{-\frac{1}{2}}  \tH \Pp.
\end{align*}
The next thing to do is to commute $\A$ with $M_0$ :
\begin{align*}
F &= \const \frac{1}{R} \int_{\C} \pp \Pp \tH (z-A/R)^{-2} M_0 \tH \Pp  \\
&+ \const \frac{1}{R} \int_{\C} \pp \Pp \tH (z-A/R)^{-1} [M_0 , \A]_{\circ} \tH \Pp \\
&+ \Pp \tH \JapA ^{-\frac{1}{2}} w_M^{-p,-\frac{1}{2}}  \left( \frac{A}{R} \right)  \left( \frac{B_1}{R^2} + \frac{\mathcal{K}}{R} \right) w_M^{-p,-\frac{1}{2}} \left( \frac{A}{R} \right) \JapA ^{-\frac{1}{2}}  \tH \Pp.
\end{align*}
We apply \eqref{derivative} to the first integral (which converges in norm), while for the second integral we use the fact that $M_0 \in \mathcal{C}^1(A)$ to conclude that there exists a $B_2 \in \mathscr{B}(\mathscr{H})$ whose norm is uniformly bounded in $R$ such that 
\begin{align*}
F &= R^{-1} \Pp \tH \varphi'(A/R) M_0 \tH \Pp \\
&+  \Pp \tH \JapA ^{-\frac{1}{2}} w_M^{-p,-\frac{1}{2}}  \left( \frac{A}{R} \right) \left( \frac{B_2}{R^2} + \frac{\mathcal{K}}{R} \right) w_M^{-p,-\frac{1}{2}} \left( \frac{A}{R} \right)  \JapA ^{-\frac{1}{2}}  \tH \Pp.
\end{align*}
Now $\varphi'(A/R) = \langle A/R \rangle ^{-1} w_M^{-2p,-1}  (A/R) $. By Proposition \ref{goddam},
\begin{equation*}
[\langle A/R \rangle ^{-\frac{1}{2}} w_M^{-p,-\frac{1}{2}}  (A/R), M_0 ]_{\circ} \langle A/R \rangle ^{\frac{1}{2}} w_M^{p,\frac{1}{2}}  (A/R) = R^{-1} B'
\end{equation*} 
for some $B' \in \mathscr{B}(\mathscr{H})$ whose norm is uniformly bounded in $R$. Thus there is $B_3 \in \mathscr{B}(\mathscr{H})$ with norm uniform in $R$ such that
\begin{align*}
F &= R^{-1} \Pp \tH \JapA ^{-\frac{1}{2}} w_M^{-p,-\frac{1}{2}} \left( \frac{A}{R} \right)  M_0 w_M^{-p,-\frac{1}{2}} \left( \frac{A}{R} \right)  \JapA ^{-\frac{1}{2}}   \tH \Pp \\
&+  \Pp \tH \JapA ^{-\frac{1}{2}} w_M^{-p,-\frac{1}{2}} \left( \frac{A}{R} \right)  \left( \frac{B_3}{R^2} + \frac{\mathcal{K}}{R} \right) w_M^{-p,-\frac{1}{2}} \left( \frac{A}{R} \right)  \JapA ^{-\frac{1}{2}}   \tH \Pp \\
& \geqslant \gamma R^{-1} \Pp \tH  \JapA ^{-\frac{1}{2}} w_M^{-p,-\frac{1}{2}}\left( \frac{A}{R} \right)  \Pp \chi(H) \eta^2(\Delta) \chi(H) \Pp w_M^{-p,-\frac{1}{2}}\left( \frac{A}{R} \right)  \JapA ^{-\frac{1}{2}}  \tH \Pp  \\
&+  \Pp \tH  \JapA ^{-\frac{1}{2}}w_M^{-p,-\frac{1}{2}} \left( \frac{A}{R} \right)  \left( \frac{B_3}{R^2} + \frac{\mathcal{K}}{R} \right) w_M^{-p,-\frac{1}{2}} \left( \frac{A}{R} \right)  \JapA ^{-\frac{1}{2}} \tH \Pp
\end{align*}
where $\gamma>0$ comes from applying the Mourre estimate for $\Delta$ and $A$. Let
\begin{equation*}
\tilde{\mathcal{K}} := \gamma \Pp \chi(H)(\eta^2(\Delta)-\eta^2(H))\chi(H) \Pp.
\end{equation*} 
Note that $\tilde{\mathcal{K}}$ is compact with $\|\tilde{\mathcal{K}}\|$ vanishing as the support of $\chi$ gets tighter around $E$. Thus 
\begin{align*}
F & \geqslant \gamma R^{-1} \Pp \tH \JapA ^{-\frac{1}{2}} w_M^{-p,-\frac{1}{2}} \left( \frac{A}{R} \right)  \Pp \chi(H) \eta^2(H) \chi(H) \Pp  w_M^{-p,-\frac{1}{2}}\left( \frac{A}{R} \right)  \JapA ^{-\frac{1}{2}}  \tH \Pp  \\
&+  \Pp \tH \JapA ^{-\frac{1}{2}} w_M^{-p,-\frac{1}{2}} \left( \frac{A}{R} \right)  \left( \frac{B_3}{R^2} + \frac{\mathcal{K} + \tilde{\mathcal{K}}}{R} \right) w_M^{-p,-\frac{1}{2}} \left( \frac{A}{R} \right)  \JapA ^{-\frac{1}{2}}  \tH \Pp.
\end{align*}
Finally, we commute $\Pp \chi(H) \eta^2(H) \chi(H) \Pp = \Pp \eta^2(H)  \Pp$ with $w_M^{-p,-\frac{1}{2}} (A/R)  \langle A / R \rangle ^{-\frac{1}{2}} $, and see that 
\begin{equation*}
\gamma [\Pp \eta^2(H) \Pp, w_M^{-p,-\frac{1}{2}} (A/R)  \langle A / R \rangle ^{-\frac{1}{2}} ]_{\circ} w_M^{p, \frac{1}{2}} (A/R)  \langle A / R \rangle ^{\frac{1}{2}} = R^{-1} B''
\end{equation*}
for some $B'' \in \mathscr{B}(\mathscr{H})$ whose norm is uniformly bounded in $R$. Thus there is $B_4 \in \mathscr{B}(\mathscr{H})$ with norm uniform in $R$ such that
\begin{align*}
F & \geqslant \gamma R^{-1} \Pp \tH  \JapA ^{-1} w_M^{-2p,-1}  \left( \frac{A}{R} \right) \tH \Pp  \\
&+  \Pp \tH \JapA ^{-\frac{1}{2}} w_M^{-p,-\frac{1}{2}} \left( \frac{A}{R} \right)  \left( \frac{B_4}{R^2} + \frac{\mathcal{K} + \tilde{\mathcal{K}}}{R} \right)  w_M^{-p,-\frac{1}{2}}  \left( \frac{A}{R} \right)  \JapA ^{-\frac{1}{2}}  \tH \Pp.
\end{align*}
To conclude, we shrink the support of $\chi$ to ensure that $\|\mathcal{K} + \tilde{\mathcal{K}}\| < \gamma/3$ and choose $R > 1$ so that $\| B_4 \|/R < \gamma/3$. Then $\mathcal{K} + \tilde{\mathcal{K}} \geqslant -\gamma/3$ and $B_4 /R \geqslant -\gamma/3$, so 
\begin{equation}
F = \Pp \tH \Big [H, \i \varphi \left( \frac{A}{R} \right) \Big]_{\circ} \tH \Pp \geqslant \frac{\gamma}{3R} \Pp \tH \JapA ^{-1} w_M^{-2p,-1} \left( \frac{A}{R} \right)  \tH \Pp.
\end{equation}
Let $J'$ be any open interval with $J' \Subset I$. Applying $E_{J'}(H)$ on both sides of this inequality yields the projected weighted Mourre estimate \eqref{general_weighted}, with $c=\gamma/(3R)$ and $C = \langle A/R \rangle ^{-\frac{1}{2}} w_M^{-p,-\frac{1}{2}}  \left( A/R\right)$, $1/2<p<q/4$. The proof of Theorem \ref{lapy3} is complete, as explained in Section \ref{weightedMestimate}.
\qed








\section{Appendix A. Nevanlinna functions, operator monotone functions and Loewner's theorem} 

\label{Nevanlinna}

We revisit Loewner's theorem on matrix operator monotone functions, see e.g.\ \cite{L}, \cite{Do}, \cite{Si2} and \cite{Ha}. This wonderful theorem makes a striking connection between the operator monotone functions and the Nevanlinna functions.

Let $\C_+$ (resp.\ $\C_-$) denote the complex numbers with strictly positive (resp.\ negative) imaginary part. A \emph{Nevanlinna function} (also known as Herglotz, Pick or R function) is an analytic function on $\C_+$ that maps $\C_+$ to $\overline{\C_+}$. A function $f : \C_+ \mapsto \C$ is Nevanlinna if and only if it admits a representation
\begin{equation}
\label{Nevanlinna3}
f(z) = \alpha + \beta z + \int_{\R} \left( \frac{1}{\lambda - z} - \frac{\lambda}{\lambda^2+1} \right) d \mu (\lambda), \quad z \in \C^+
\end{equation}
where $\alpha \in \R$, $\beta \geqslant 0$, and $\mu$ is a positive Borel measure on $\R$ satisfying $\int_{\R} (\lambda^2+1) ^{-1} d\mu(\lambda) < \infty$. We refer to \cite[Theorem 1 of Chapter II]{Do} for a proof of this wonderful result. The integral representation is unique. The measure $\mu$ is recovered from $f$ by the Stieltjes inversion formula
$$\mu \left( (\lambda_1, \lambda_2] \right) = \lim \limits_{\delta \downarrow 0} \lim \limits_{\epsilon \downarrow 0} \frac{1}{\pi} \int_{\lambda_1 + \delta} ^{\lambda_2 + \delta} \mathrm{Im} \left( f(\lambda + \i \epsilon) \right) d\lambda.$$
Standard examples of Nevanlinna functions given in the literature include $z^{p}$ for $0 \leqslant p \leqslant 1$, $-z^{p}$ for $-1 \leqslant p \leqslant 0$, and the logarithm $\ln(z)$ with the branch cut $(-\infty,0]$.  
\\

\noindent \textbf{Notation.} Let $(a,b)$ be an open interval (finite or infinite).  Denote $P(a,b)$ the set of Nevanlinna functions that continue analytically across $(a,b)$ into $\C_-$ and where the continuation is by reflection. \\

Functions in $P(a,b)$ are real-valued on $(a,b)$ and their measure satisfies $\mu \left( (a,b) \right) = 0$, see \cite[Lemma 2, Chapter II]{Do}. Functions in $P(a,b)$ are strictly increasing on $(a,b)$, unless they are constant. Indeed, if $f$ is not a constant function $\beta$ and $\mu$ cannot be simulaneously zero and so
$$f'(x) = \beta + \int _{\R} \frac{d\mu (\lambda)}{(\lambda-x)^2} > 0, \quad x \in (a,b).$$
Let $\mathscr{M}_n(\C)$ be the set of $n \times n$ matrices with entries in $\C$, and consider a function $f : (a,b) \mapsto \R$. 

\noindent \textbf{Definition.}
$f$ is \emph{matrix monotone of order $n$ in} $(a,b)$ if $f(T) \leqslant f(S)$ holds whenever $T,S$ in $\mathscr{M}_n(\C)$ are hermitian matrices with spectrum in $(a,b)$ and $T \leqslant S$. \\

\noindent In 1934 Karl Loewner proved the following remarkable theorem that characterizes the matrix monotone functions : 

\begin{theorem} 
\label{LoewnerOriginal} \cite{L} Let $f : (a,b) \mapsto \R$, where $(a,b)$ is a finite or infinite open interval. Then $f$ is matrix operator monotone of order $n$ in $(a,b)$ for all $n \in \N$ if and only if $f$ admits an analytic continuation that belongs to $P(a,b)$.
\end{theorem}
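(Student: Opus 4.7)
The plan is to treat the two implications separately; the reverse implication is straightforward, while the forward implication is the classical hard core of the theorem and carries all of the weight.

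For the reverse implication ($\Leftarrow$), suppose $f$ admits an analytic continuation belonging to $P(a,b)$. Combining the integral representation \eqref{Nevanlinna3} with the earlier observation that the representing measure $\mu$ satisfies $\mu((a,b)) = 0$, for every $x \in (a,b)$ we have
\[ f(x) = \alpha + \beta x + \int_{\R \setminus (a,b)} \left( \frac{1}{\lambda - x} - \frac{\lambda}{\lambda^2 + 1} \right) d\mu(\lambda). \]
Let $A \leqslant B$ be hermitian matrices with spectra in $(a,b)$. The affine piece is trivially operator monotone since $\beta \geqslant 0$. For each fixed $\lambda \in \R \setminus (a,b)$, the operators $\lambda I - A$ and $\lambda I - B$ are either both strictly positive (if $\lambda > b$) or both strictly negative (if $\lambda < a$), and $\lambda I - A \geqslant \lambda I - B$; inversion reverses the order on definite operators of the same sign, yielding $(\lambda I - A)^{-1} \leqslant (\lambda I - B)^{-1}$. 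Integrating against the positive measure $d\mu$ preserves the inequality, so $f(A) \leqslant f(B)$.

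For the forward implication ($\Rightarrow$), the strategy is to manufacture the analytic continuation from the hypothesis of matrix monotonicity at all finite orders. First I would use matrix monotonicity of order $2$ to show that $f$ is $C^1$ on $(a,b)$, so that differentials are available. The central tool is then the \emph{Loewner matrix}: for any choice of points $x_1, \dots, x_n \in (a,b)$, set
\[ L_{ij} := \begin{cases} \dfrac{f(x_i) - f(x_j)}{x_i - x_j}, & i \neq j, \\[4pt] f'(x_i), & i = j. \end{cases} \]
A direct algebraic computation — differentiating the order-$n$ monotonicity condition along a suitable one-parameter family of hermitian matrices — shows that $f$ being matrix monotone of order $n$ forces $L$ to be positive semidefinite for every such $n$-tuple of points.

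The remaining and hardest step is to upgrade this infinite family of finite-dimensional positivity conditions into a genuine analytic continuation of $f$ into $\C_+$. My preferred route would be the Bendat--Sherman approach via the Hausdorff moment problem: after an affine change of variable reducing to a bounded subinterval, expand $f$ in a power series about an interior point and use the positivity of all the Loewner matrices to show that the Taylor coefficients form a Hausdorff moment sequence of a positive measure $\mu$; this identifies $f$ locally with a function of the form appearing in \eqref{Nevanlinna3}, which then extends globally by analytic continuation and takes $\C_+$ to $\overline{\C_+}$. Because $f$ is real-valued on $(a,b)$, the Schwarz reflection principle guarantees that the continuation crosses $(a,b)$ by reflection, placing it in $P(a,b)$. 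The main obstacle is precisely this passage from finite-dimensional positivity to global analyticity; the Loewner matrix calculation is the bookkeeping step, but extracting an analytic continuation from it is where all the real content lies, and no short-cut around it is known.
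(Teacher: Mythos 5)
The paper does not prove Theorem \ref{LoewnerOriginal}: it is stated with a citation to Loewner's 1934 paper \cite{L}, and the surrounding discussion refers the reader to \cite{Do}, \cite{Si2} and \cite{Ha} for proofs. So there is no in-paper argument against which to measure yours; what follows compares against the literature the paper points to.

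Your ($\Leftarrow$) direction is complete and correct. You use \eqref{Nevanlinna3} together with $\mu((a,b))=0$, note that the affine part $\alpha+\beta x$ with $\beta\geqslant 0$ is trivially operator monotone, and that for each $\lambda\notin(a,b)$ the map $X\mapsto(\lambda I-X)^{-1}$ is operator monotone on hermitian $X$ with spectrum in $(a,b)$ because $\lambda I-A$ and $\lambda I-B$ are then simultaneously positive definite or simultaneously negative definite; integrating against the positive measure $\mu$ preserves the inequality. This is essentially the argument the paper itself uses on the abstract side of the extension Theorem \ref{Prop33}, where a truncated version of the same integral representation is integrated against the spectral measures.

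Your ($\Rightarrow$) direction is an honest and essentially accurate outline of the Bendat--Sherman route (one of the several known proofs the paper cites). Two points of imprecision worth flagging. First, you say order-$2$ monotonicity gives $f\in C^1$ and then ``expand $f$ in a power series''; a $C^1$ function has no power series, and what the argument actually needs is $C^\infty$ regularity, which follows from Dobsch's theorem (order-$n$ monotonicity $\Rightarrow C^{2n-3}$) applied for all $n$, with analyticity emerging only at the end once the moment representation is in hand, not assumed at the outset. Second, for an infinite interval the normalization is by a M\"obius (not affine) change of variable, since affine maps preserve unboundedness; alternatively, one argues on exhausting finite subintervals and glues the analytic continuations. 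Beyond these, your acknowledged stopping point — converting positive semidefiniteness of all Loewner matrices into a Hausdorff moment sequence and hence into the integral representation — is indeed the hard analytic core, which you leave unproved just as the paper does by treating the theorem as a citation.
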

Loewner's theorem is a truly wonderful result and has been reproved in several different ways, see e.g.\ \cite[Theorem I of Chapter VII and Chapter IX for the converse]{Do}, \cite{Si2} or \cite{Ha} and references therein for a concise historical exposition. For the purpose of this article, we need a version of Loewner's theorem that applies to unbounded self-adjoint operators. In B.\ Simon's book \cite[Chapter 2]{Si2} it is explicitly discussed how Loewner's theorem extends to unbounded operators. We propose below yet another proof of the extension to the semi-bounded operators (which is the case for $\langle A \rangle$ and $\langle N \rangle$). For self-adjoint operators $T$, $S$ which are bounded from below, the inequality $T \leqslant S$ means $\mathrm{Dom}[|S|^{1/2}]\subset \mathrm{Dom}[|T|^{1/2}]$ and
 $\langle  \psi, T \psi \rangle \leqslant \langle \psi, S \psi \rangle$ for all $\psi \in \mathrm{Dom}[|S|^{1/2}]$.
\\

\noindent \textbf{Definition.} $f$ is \emph{operator monotone in} $(a,b) \subset \R$ if $f(T) \leqslant f(S)$ holds whenever $T,S$ (possibly unbounded) are self-adjoint operators in $\mathscr{H}$ with spectrum contained in $(a,b)$ and $T \leqslant S$.
\\ 

Assuming $f \in P(a,b)$, let $\mu$ be the measure associated to $f$, see \eqref{Nevanlinna3}, and denote the supremum of the support of $\mu$ by $\Sigma_{\mu}$. In what follows the discussion is for general self-adjoint $T,S$, but one should keep in mind that the results are applied to $T =\langle A \rangle$ and $S = \sqrt{c_d} \langle N \rangle$, cf.\  Lemma \ref{A<N}. We start with a Lemma :
\begin{Lemma} 
\label{lemmaDomains}
Let $f$ belong to $P(a,+\infty)$. Then
\begin{enumerate}
\item $\Sigma_{\mu} \leqslant a$,
\item given $T$ a self-adjoint operator with $\inf (\sigma(T))> a >0$, we have $f(T)$ bounded from below and $\mathrm{Dom}[T] \subseteq \mathrm{Dom}[|f(T)|]$ with equality if and only if $\beta >0$.
\end{enumerate}
\end{Lemma}
\begin{proof}
For (1), we start by noting that $f$ belongs to $P(a,+\infty)$ so $f$ admits an integral representation as in \eqref{Nevanlinna3}. Thus $\Sigma_{\mu} \leqslant a$ holds thanks to the Stieltjes inversion formula and the fact that $f$ is real-valued on $(a,+\infty)$. For (2), adding a constant to $f$ does not alter the assumptions of the Lemma and leads to the same conclusion, so we may assume  $f(\inf(\sigma(T))) >0$, where $\inf(\sigma(T)) > a$ (recall $f$ is non-decreasing on $(a,+\infty)$). We start by showing that $\lim f(x)/x = \beta$, as $x\to +\infty$, or equivalently
$$ \lim \limits_{x \to +\infty} \int_{-\infty} ^{\Sigma_{\mu}} \frac{1+\lambda x}{x(\lambda -x)(\lambda^2+1)} d\mu(\lambda) = 0.$$ 
We wish to exchange the order of the limit and integration. We have 
\begin{equation}
\label{absolute}
\big | (1+\lambda x)x^{-1}(\lambda -x)^{-1} \big | \leqslant 1, \quad \forall (\lambda,x) \in (-\infty,0] \times [1,+\infty) \cup [0,a] \times [a+\sqrt{a^2+1}, + \infty).
\end{equation}
We may apply the dominated convergence theorem, and the above limit follows.
 This limit implies that $f(x)/x$ is a bounded function on $(a,+\infty)$ and hence $\mathrm{Dom}[T] \subset \mathrm{Dom}[|f(T)|]$. For the reverse inclusion, $x/f(x)$ is a well defined bounded function on $(\inf(\sigma(T)),+\infty)$ iff $\beta >0$.
\qed
\end{proof}

We are now ready to prove the extension of Theorem \ref{LoewnerOriginal} to semi-bounded operators :

\begin{theorem} 
\label{Prop33}
Let $f : (a,b) \mapsto \R$, where $0 < a < b \leqslant +\infty$. Then $f$ is operator monotone in $(a,b)$ if and only if $f$ admits an analytic continuation that belongs to $P(a,b)$. 
\end{theorem}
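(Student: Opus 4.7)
\textbf{Proof plan for Theorem \ref{Prop33}.} The $(\Rightarrow)$ direction is immediate from Loewner's classical Theorem \ref{LoewnerOriginal}: if $f$ is operator monotone on arbitrary semi-bounded self-adjoint operators with spectrum in $(a,b)$, then by considering Hermitian $n\times n$ matrices $A\leqslant B$ with spectrum in $(a,b)$ (realized as operators on an $n$-dimensional subspace of any separable $\mathscr{H}$), $f$ is matrix monotone of every order $n$, hence admits an analytic continuation in $P(a,b)$. So the work is in the $(\Leftarrow)$ direction, and the plan is to reduce everything to the integral representation \eqref{Nevanlinna3} of $f$ and to the elementary fact that resolvents are operator monotone.

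The second step is preparatory: up to adding a constant (which does not affect either hypothesis), assume $\lim_{\epsilon\dto 0} f(a+\epsilon) > 0$ so that Lemma \ref{lemmaDomains} is available. Since $f\in P(a,b)$, the measure $\mu$ in \eqref{Nevanlinna3} is supported in $\R\setminus(a,b)$, and
\[
f(x) \;=\; \alpha + \beta x + \int_{\R\setminus(a,b)} \left(\frac{1}{\lambda-x} - \frac{\lambda}{\lambda^2+1}\right)\,d\mu(\lambda), \qquad x\in(a,b).
\]
For each fixed $\lambda\notin(a,b)$, if $A\leqslant B$ are self-adjoint with spectrum in $(a,b)$, then $\lambda - A$ and $\lambda-B$ are \emph{bounded} self-adjoint operators of one fixed sign with $(\lambda-A)-(\lambda-B) = B-A \geqslant 0$; applying the standard (bounded) operator monotonicity of $t \mapsto -1/t$ on either $(0,\infty)$ or $(-\infty,0)$ yields $(\lambda-A)^{-1} \leqslant (\lambda-B)^{-1}$ if $\lambda \geqslant b$, and the reverse if $\lambda\leqslant a$, which in both cases reads $1/(\lambda-A) \leqslant 1/(\lambda-B)$.

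The heart of the proof is integrating this pointwise (in $\lambda$) monotonicity. Let $\psi\in\mathrm{Dom}[A]\cap\mathrm{Dom}[B]$; by Lemma \ref{lemmaDomains}, $\psi\in\mathrm{Dom}[f(A)]\cap\mathrm{Dom}[f(B)]$. Using the scalar spectral theorem for $A$ and $B$ together with Fubini, which is legitimate because the bound \eqref{absolute} from the proof of Lemma \ref{lemmaDomains} makes the integrand $(\lambda,s)\mapsto 1/(\lambda-s) - \lambda/(\lambda^2+1)$ absolutely integrable against $d\mu\otimes d\langle\psi,E_\bullet(s)\psi\rangle$ on $(\R\setminus(a,b))\times(a,b)$, one gets
\[
\langle \psi,\,(f(B)-f(A))\,\psi\rangle \;=\; \beta\,\langle \psi,(B-A)\psi\rangle + \int_{\R\setminus(a,b)} \langle \psi,\, [(\lambda-B)^{-1}-(\lambda-A)^{-1}]\,\psi\rangle\, d\mu(\lambda).
\]
Both terms on the right are non-negative by the resolvent monotonicity above and $\beta\geqslant 0$.

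The main obstacle, and the last step, is extending the inequality from $\mathrm{Dom}[A]\cap\mathrm{Dom}[B]$ to all of $\mathrm{Dom}[f(A)]\cap\mathrm{Dom}[f(B)]$, which is genuinely larger when $\beta=0$. For this I would use the spectral cutoffs $\psi_n := E_A([a,a+n])E_B([a,a+n])\psi$ (or a symmetric variant built from the product of projections $E_A\cdot E_B\cdot E_A$), which lie in $\mathrm{Dom}[A]\cap\mathrm{Dom}[B]$, satisfy $\psi_n\to\psi$, and along which $f(A)\psi_n\to f(A)\psi$ and $f(B)\psi_n\to f(B)\psi$ by dominated convergence applied to the spectral integrals. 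The quadratic-form inequality is preserved in the limit, finishing the proof. The slightly delicate point to be careful about is that these spectral cutoffs are not easily shown to commute with $A$ and $B$ simultaneously, so one may prefer the one-sided approximation $\psi \mapsto E_A([a,a+n])\psi$ and a parallel argument for $B$, combined with the monotone convergence of $\langle \psi, f(A)\psi\rangle$ along the increasing projections.
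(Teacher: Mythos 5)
Your plan shares the skeleton of the paper's proof: reduce the forward direction to Loewner's Theorem \ref{LoewnerOriginal}, then in the converse use the Nevanlinna representation \eqref{Nevanlinna3} together with monotonicity of resolvents. (One slip: you describe $\lambda-A$, $\lambda-B$ as \emph{bounded}, which is false when $b=+\infty$ — the case that actually needs a new argument — but the resolvent inequality $\langle\psi,(\lambda-A)^{-1}\psi\rangle\leqslant\langle\psi,(\lambda-B)^{-1}\psi\rangle$ for $\lambda$ below the spectra is a classical fact for unbounded operators, and is exactly what the paper invokes from \cite[Ex.\ 51, Ch.\ VIII]{RS1} or \cite[Thm 2.8]{Si2}.) The Fubini step on $\mathrm{Dom}[A]\cap\mathrm{Dom}[B]$, controlled by the bound \eqref{absolute}, is sound.

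The genuine gap is in your final domain-extension step, and it is precisely the step where the new content of the theorem lives (for $b<+\infty$ every operator with spectrum in $(a,b)$ is bounded, so the extension of Loewner's theorem there is standard and the paper simply cites \cite[Lemma 2.2]{BS} or \cite[Ch.\ 2]{Si2}). When $\beta=0$ and $b=+\infty$, you need the quadratic-form inequality on $\mathrm{Dom}[f(A)]\cap\mathrm{Dom}[f(B)]$, which is strictly larger than $\mathrm{Dom}[A]\cap\mathrm{Dom}[B]$, and your proposed approximants do not lie in the latter intersection. Indeed $E_A([a,a+n])\psi$ belongs to $\mathrm{Dom}[A]$ but in general not to $\mathrm{Dom}[B]$, and $E_A([a,a+n])E_B([a,a+n])\psi$ belongs to $\mathrm{Dom}[A]$ but again not to $\mathrm{Dom}[B]$, because the two spectral families need not commute; the symmetrized product $E_A E_B E_A$ has the same defect. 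You flag the delicacy yourself, but neither workaround produces vectors in $\mathrm{Dom}[A]\cap\mathrm{Dom}[B]$, so the inequality you wish to pass to the limit along is not available on the approximating sequence. The paper circumvents this entirely by truncating the \emph{measure} rather than the vector: the cut-off functions $g_r$ of \eqref{sequenceFr1} are bounded Borel functions, so $g_r(A),g_r(B)$ are bounded operators and the inequality $\langle\psi,g_r(A)\psi\rangle\leqslant\langle\psi,g_r(B)\psi\rangle$ holds for \emph{every} $\psi\in\mathscr{H}$ with no domain constraints. Then Lemma \ref{Lem11} gives $0\leqslant g_r(x)\nearrow g(x)$ pointwise on $[a',\infty)$, and dominated convergence inside the spectral integral yields $g_r(A)\psi\to g(A)\psi$ for $\psi\in\mathrm{Dom}[g(A)]$, allowing one to let $r\to\infty$ directly on the correct domain. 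That truncation is the idea your plan is missing.
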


\begin{proof} 
If $f$ is operator monotone in $(a,b)$, then in particular it is matrix operator monotone of order $n$ in $(a,b)$ for all $n \in \N$, and so $f$ admits an analytic continuation that belongs to $P(a,b)$ by Loewner's theorem. This direction is in fact the hard direction in Loewner's theroem, but the extension is trivial! 

For the converse, we suppose that $f$ admits an analytic continuation that belongs to $P(a,b)$. Consider the two separate cases $b < +\infty$ and $b = +\infty$.
If $b < +\infty$, then $f$ is matrix operator monotone of order $n$ in $(a,b)$ for all $n \in \N$ by Loewner's theorem. But, since the interval $(a,b)$ is finite, this is equivalent to being operator monotone in $(a,b)$, see e.g.\ \cite[Lemma 2.2]{BS} or \cite[Chapter 2]{Si2}. Now the case $b=+\infty$. We have $\Sigma_{\mu} \leqslant a$ by Lemma \ref{lemmaDomains}, and 
$$f(x) = \alpha + \beta x + \int _{-\infty} ^{\Sigma_{\mu}} \left( \frac{1}{\lambda - x} - \frac{\lambda}{\lambda^2+1} \right) d \mu (\lambda), \quad x \in (a,+\infty) .$$
Let
\begin{equation}
\label{sequenceFr1}
g_r (x) :=  \alpha + \int_{-r} ^{\Sigma_{\mu}}  \left( \frac{1}{\lambda - x} - \frac{\lambda}{\lambda^2+1} \right) d \mu (\lambda), \quad g(x) := f(x) - \beta x, \quad x \in (a,+\infty).
\end{equation}
Note that $\{g_r \}_{r \in \R^+}$ is of a sequence of functions of the real variable $x$ that converges pointwise to $g(x)$ as $r \to + \infty$ for all $x \in (a,+\infty)$. Moreover, since for every fixed $r$, $g'_r(x) \geqslant 0$, all functions in the sequence are increasing in the variable $x$. We need a Lemma.

\begin{Lemma} 
\label{Lem11}
For every fixed $\epsilon > 0$, the sub-sequence $\{ g_r(x) \}_{r > 1/ \epsilon}$ has the property that $g_r(x) \nearrow g(x)$ for every $x >\max(\Sigma_{\mu} + \epsilon, a)$.
\end{Lemma}
\begin{proof}
Clearly $g_r(x) \to g(x)$ as $r\to +\infty$ for all $x \in (a,+\infty)$. What needs to be shown is that the sequence $g_r(x)$ is increasing pointwise. Let $\epsilon > 0$ be given and fix $x > \max(\Sigma_{\mu} + \epsilon, a)$. Recall $a$ is assumed to be strictly positive. The integrand in \eqref{sequenceFr1} is equal to 
$$\frac{1+\lambda x}{(\lambda-x) (\lambda^2+1)}.$$
On the one hand, $x > \Sigma_{\mu}+ \epsilon$ implies that $(\lambda-x)$ is negative for all $\lambda \in \mathrm{supp} \ \mu$. On the other hand, the assumption $x > a > 0$ implies that $(1+\lambda x)$ is negative for all $\lambda < -1/x$. Thus, for all $x >\max(\Sigma_{\mu} + \epsilon, a)$ and for all $\lambda < -1/a$, the integrand in \eqref{sequenceFr1} is strictly positive. Thus, as $r$ increases above $1/a$, the value of the integral in \eqref{sequenceFr1} strictly increases. This completes the proof of the Lemma.
\qed
\end{proof}

\noindent \textit{Continuation of the proof of Theorem \ref{Prop33}.}
Now fix $T,S$ self-adjoint operators with spectrum contained in $(a,+\infty)$ and $T \leqslant S$. 
Let $\psi \in \mathscr{H}$. $T \leqslant S$ implies $\langle \psi, (\lambda - T)^{-1} \psi \rangle \leqslant \langle \psi, (\lambda - S)^{-1} \psi \rangle$ for all $\lambda \leqslant \Sigma_{\mu}$, e.g. \cite[Theorem VI.2.21]{Ka}.
We integrate over the compact $[-r,\Sigma_{\mu}]$ :
$$ \int_{-r}^{\Sigma_{\mu}} \langle \psi, (\lambda - T)^{-1} \psi \rangle \ d\mu(\lambda) \leqslant \int_{-r}^{\Sigma_{\mu}} \langle \psi, (\lambda - S)^{-1} \psi \rangle \ d\mu(\lambda), \quad \text{for every finite} \ r > |\Sigma_{\mu}|.$$
Moreover, since the integrand is norm continuous, we infer 
\begin{equation}
\label{eqn11}
\langle \psi, g_r(T) \psi \rangle \leqslant \langle \psi, g_r(S) \psi\rangle, \quad \text{for every finite} \ r > |\Sigma_{\mu}|.
\end{equation}
Next we choose in Lemma \ref{Lem11} $\varepsilon$ small enough so that $\max(\Sigma_\mu+\varepsilon, a)<\inf(\sigma(T))\leqslant \inf(\sigma(S))$. 
We obtain $g_r(x) \nearrow g(x)$ (for $r>1/\varepsilon$) as $r$ goes to infinity. 
In turn, the monotone convergence theorem for forms, 
e.g.\ \cite[Theorem VIII.3.11]{Ka}, 
ensures that $\langle  \psi, g_r(T) \psi\rangle$ converges to $\langle  \psi, g(T)\psi\rangle$ as $r \to +\infty$ for all $\psi \in \mathrm{Dom}[|g(T)|^{1/2}]$, and similarly for $S$. Taking limits in \eqref{eqn11} gives 
$\mathrm{Dom}[|g(S)|^{1/2}]\subset \mathrm{Dom}[|g(T)|^{1/2}]$ and
$$\langle \psi, g(T) \psi \rangle \leqslant \langle \psi, g(S) \psi\rangle, \quad \psi \in \mathrm{Dom}[|g(S)|^{1/2}].$$
Thus, if $\beta = 0$ we have
$$\langle \psi, f(T) \psi \rangle \leqslant \langle \psi, f(S) \psi\rangle, \quad \psi \in \mathrm{Dom}[|g(S)|^{1/2}]= \mathrm{Dom}[|f(S)|^{1/2}],$$
whereas if $\beta > 0$, we use the fact that $0 < T\leqslant S$, and $\mathrm{Dom}[S^{1/2}] = \mathrm{Dom}[|f(S)|^{1/2}]$, $\mathrm{Dom}[T^{1/2}]= \mathrm{Dom}[|f(T)|^{1/2}]$,  by Lemma \ref{lemmaDomains}, which yields
$$\langle \psi, f(T) \psi \rangle \leqslant \langle \psi, f(S) \psi\rangle, \quad \psi \in \mathrm{Dom}[S^{1/2}] = \mathrm{Dom}[|f(S)|^{1/2}].$$
This gives the result. \qed
\end{proof}


To close this Section, we have a remark about results in the literature on order relations for general self-adjoint operators. In \cite{O}, Olson introduced the \emph{spectral order} for self-adjoint operators : $T \preceq S$ if and only if $E_{(-\infty,t]}(T) \geqslant E_{(-\infty,t]}(S)$ for all $t \in \R$. Here $E_{(-\infty,t]}(T)$ and $E_{(-\infty,t]}(S)$ are the spectral resolutions of the identity for $T$ and $S$. He showed that $T^n \leqslant S^n$ for every $n \in \N$ is equivalent to $T \preceq S$, see also \cite[Proposition 5]{U}. Furthermore, it is shown in \cite{FK} that this order relation is equivalent to $f(T) \leqslant f(S)$ for any continuous monotone nondecreasing function $f$ defined on an interval which contains $\sigma(T) \cup \sigma(S)$. For the purpose of this article, we do not know if $\langle A \rangle ^n \leqslant (\sqrt{c_d} \langle N \rangle)^n $ holds $\forall n \in \N$ but if it does it would considerably simplify this article. To check this inequality by brute force seems to be unbearable.

\section{Appendix B. Polylogarithms of positive order are Nevanlinna functions}
\label{Polylog}

That logarithm with the standard branch cut is a Nevanlinna function follows from the identity $\ln(z) = \ln(r) + \i \theta$, where $z = r e^{\i \theta}$, $r>0$, and $\theta \in (-\pi,\pi)$. The integral representation of the logarithm is 
\begin{equation*}
\label{log}
\ln(z) = \int_{-\infty} ^0 \left(\frac{1}{\lambda-z} - \frac{\lambda}{\lambda^2+1} \right) d\lambda.
\end{equation*}
The composition of Nevanlinna functions produces another Nevanlinna function. So for example $\ln^p(z)$ is Nevanlinna for $0 \leqslant p \leqslant 1$. What about higher powers of the logarithm ? Certainly the square and cube of the logarithm are not Nevannlina functions. Indeed writing 
$$\ln^2(z) = \ln^2(r) - \theta^2 + \i 2 \theta \ln(r), \quad \text{and} \quad \ln^3(z) = \ln^3(r) - 3\theta^2 \ln(r) + \i \theta( 3\ln^2(r) -\theta^2)$$
reveals that these functions do not map $\C_+$ to $\overline{\C_+}$. In spite of this there are functions that are Nevanlinna and are "almost" equal to the logarithms. To motivate the idea, we note that the Stieltjes inversion formula gives

$$\lim \limits_{\delta \downarrow 0} \lim \limits_{\epsilon \downarrow 0} \frac{1}{\pi} \int_{\lambda_1 + \delta} ^{\lambda_2 + \delta} \mathrm{Im} \left( \ln^2(\lambda + \i \epsilon) \right) d\lambda = \begin{cases}
0 & \lambda_1 \geqslant 0, \\
\int_{\lambda_1} ^{\lambda_2} 2\ln(|\lambda|) d\lambda & \lambda_2 \leqslant 0 \\
\end{cases}
$$
when applied to $\ln^2(z)$ and 

$$\lim \limits_{\delta \downarrow 0} \lim \limits_{\epsilon \downarrow 0} \frac{1}{\pi} \int_{\lambda_1 + \delta} ^{\lambda_2 + \delta} \mathrm{Im} \left( \ln^3(\lambda + \i \epsilon) \right) d\lambda = \begin{cases}
0 & \lambda_1 \geqslant 0, \\
\int_{\lambda_1} ^{\lambda_2} \left( 3\ln^2(| \lambda |) - \pi^2 \right) d\lambda & \lambda_2 \leqslant 0 \\
\end{cases}
$$
when applied to $\ln^3(z)$. This suggests to calculate the Nevanlinna functions corresponding to the measures 
$\d\mu(\lambda) = \bm{1}_{\{\lambda < -1\}} \ln(-\lambda) d\lambda $ and $\d\mu(\lambda) = \bm{1}_{\{\lambda < -1\}} \ln^2(-\lambda) d\lambda$.

We introduce polylogarithms. We refer to \cite{Le} and \cite{PBM} for formulas and a detailed exposition. The polylogarithm of order $\sigma \in \C$ is defined by the power series 
 $$\mathrm{Li}_{\sigma}(z) := \sum _{k=1} ^{\infty} \frac{z^k}{k^{\sigma}}.$$
 The definition is valid for complex $|z| <1$ and is extended to the complex plane by analytic continuation. For the purpose of this article, we are interested in the polylogarithms with $\sigma > 2$, or $\sigma = 3$ if we want to simplify by taking the smallest integer above 2. The standard branch cut is $[1,+\infty)$ for $\mathrm{Li}_1(z)$ and $(1,+\infty)$ for $\mathrm{Li}_2(z)$ and $\mathrm{Li}_3(z)$. The polylogarithm of order 1 can be written in terms of a logarithm as $\mathrm{Li}_1(z) = - \ln(1-z)$. The polylogarithm of order 2 is called the \textit{dilogarithm} or \textit{Spence function} while the polylogarithm of order 3 is called the \textit{trilogarithm}. On p.\ 494 of \cite{PBM} the following integral representation is given without proof :
\begin{equation}
\label{Int_rep_PBM}
\mathrm{Li}_{\sigma+1} (z) = \frac{z}{\Gamma (\sigma+1)}  \int_{1} ^{\infty} \frac{\ln ^{\sigma} (\lambda) }{\lambda (\lambda-z)} d\lambda , 
\end{equation}
for $| \mathrm{arg} (1-z) | < \pi, \mathrm{Re}(\sigma) > -1$, or for $ z=1,  \mathrm{Re}(\sigma) > 0$. Here $\Gamma$ is the Gamma function.
Obviously \eqref{Int_rep_PBM} is equivalent to 
\begin{equation}
\label{Int_rep_PBM2}
\mathrm{Li}_{\sigma+1} (z) = - \frac{1}{\Gamma (\sigma+1)}  \int_{1} ^{\infty}  \frac{\ln ^{\sigma} (\lambda) }{\lambda (\lambda^2+1)} d\lambda + \frac{1}{\Gamma (\sigma+1)}  \int_{1} ^{\infty} \left( \frac{1}{\lambda-z} - \frac{\lambda}{ \lambda^2+1} \right) \ln ^{\sigma} (\lambda) d\lambda.
\end{equation}
This means that $\mathrm{Li}_{\sigma+1}(z)$ is a Nevanlinna function for $\mathrm{Re}(\sigma) > -1$. Although not difficult to prove, it is not clear where a proof of \eqref{Int_rep_PBM} can be found in the literature. Thus we prove it :

\begin{proposition}
\eqref{Int_rep_PBM} is true.
\end{proposition}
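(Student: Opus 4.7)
The plan is to establish the identity first on the open unit disk $|z|<1$ by a direct power series manipulation, and then to extend it to the cut plane $|\arg(1-z)|<\pi$ by analytic continuation. For $|z|<1$ and $\lambda\geq 1$ we have $|z/\lambda|<1$, so a geometric series expansion yields
\[
\frac{z}{\lambda(\lambda - z)} = \sum_{k=0}^\infty \frac{z^{k+1}}{\lambda^{k+2}}.
\]
Inserting this into the right-hand side of \eqref{Int_rep_PBM} and interchanging sum and integral gives, formally, $\frac{1}{\Gamma(\sigma+1)}\sum_{k\geq 0} z^{k+1}\int_1^\infty \log^\sigma(\lambda)\,\lambda^{-(k+2)}\,d\lambda$.

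Next I would evaluate each coefficient integral with the substitution $u=\log\lambda$:
\[
\int_1^\infty \frac{\log^\sigma(\lambda)}{\lambda^{k+2}}\,d\lambda \;=\; \int_0^\infty u^\sigma e^{-(k+1)u}\,du \;=\; \frac{\Gamma(\sigma+1)}{(k+1)^{\sigma+1}},
\]
which is standard and valid precisely for $\mathrm{Re}(\sigma)>-1$. Substituting and reindexing $j=k+1$ gives $\sum_{j\geq 1} z^j/j^{\sigma+1} = \mathrm{Li}_{\sigma+1}(z)$, proving the identity for $|z|<1$.

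For the extension to $|\arg(1-z)|<\pi$, both sides are holomorphic on $\C\setminus[1,\infty)$: the polylogarithm by definition, and the integral because it is a Cauchy-type integral of an $L^1$ density supported on $[1,\infty)$, hence holomorphic off that ray. By the identity theorem the equality propagates from the disk to the whole connected domain. For the boundary value $z=1$ with $\mathrm{Re}(\sigma)>0$, the integrand behaves like $(\lambda-1)^{\sigma-1}$ near $\lambda=1$, which is integrable exactly when $\mathrm{Re}(\sigma)>0$; a dominated convergence argument then lets one take the limit $z\to 1$ radially from inside the disk.

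The only real technical point is the justification of the interchange of summation and integration in the first step. This is easily handled by Tonelli after replacing $z$ with $|z|=r<1$: the absolute sum of the integrand equals $r\log^\sigma(\lambda)/[\lambda(\lambda-r)]$, which is integrable on $[1,\infty)$ since near $\lambda=1$ it behaves like $(\lambda-1)^{\mathrm{Re}(\sigma)}/(1-r)$ (integrable for $\mathrm{Re}(\sigma)>-1$) and decays like $\log^{\mathrm{Re}(\sigma)}(\lambda)/\lambda^2$ at infinity. Aside from this bookkeeping everything is routine, and no deeper obstacle is expected.
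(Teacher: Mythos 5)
Your proof is correct and follows essentially the same route as the paper's: expand $z/(\lambda(\lambda-z))$ as a geometric series for $|z|<1$, integrate term by term using the substitution $u=\log\lambda$ and the Gamma integral to recover $\sum_{k\geq 1} z^k/k^{\sigma+1}$, then pass to the cut plane by analytic continuation. The paper is terser and leaves the sum/integral interchange and the $z=1$ boundary case unaddressed; your Tonelli justification and the dominated-convergence argument for $z\to 1$ are welcome additions but not a different method.
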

\begin{proof}
Let $\lambda \geqslant 1$. Writing $1/(\lambda(\lambda-z))$ as a power series in $z$ we have $1/(\lambda(\lambda-z)) = \sum_{k=0} ^{\infty} \lambda^{-k-2} z^k$ for $|z| < 1$. Then the rhs of \eqref{Int_rep_PBM} is equal to 
$$ \frac{z}{\Gamma (\sigma+1)}  \int_{1} ^{\infty} \sum_{k=0} ^{\infty} \frac{\ln ^{\sigma} (\lambda) }{\lambda ^{k+2}} z^k d\lambda =  \sum_{k=1} ^{\infty} \frac{z^k}{\Gamma (\sigma+1)}  \int_{1} ^{\infty} \frac{\ln ^{\sigma} (\lambda) }{\lambda ^{k+1}} d\lambda = \sum_{k=1} ^{\infty} \frac{z^k}{k^{\sigma+1}}, \quad \text{Re}(\sigma) >-1.$$
To evaluate the last integral the change of variable $k \ln(\lambda) = t$ was performed, followed by the definition of the Gamma function. Thus the rhs of \eqref{Int_rep_PBM} is equal to $\mathrm{Li}_{\sigma+1}(z)$ for $|z|<1$ and $\text{Re}(\sigma) >-1$. The result follows by the uniqueness of the analytic continuation.
\qed
\end{proof}
While we're at it we note that $\mathrm{Li}_{0} (z) = z/(1-z)$ is also a Nevanlinna function.

\noindent \textbf{Definition.} For $\sigma \in \C$,
\begin{equation}
\label{PHI}
\Phi_{\sigma} (z) := - \mathrm{Li}_{\sigma}(-z), \quad z \in \C \setminus (-\infty,-1].
\end{equation}
Clearly \eqref{Int_rep_PBM2} implies that the $\Phi_{\sigma}$ are Nevanlinna for $\text{Re}(\sigma) >-1$ with integral representations given by :
$$\Phi_{\sigma+1} (z) = \frac{1}{\Gamma (\sigma+1)}  \int_{1} ^{\infty}  \frac{\ln ^{\sigma} (\lambda) }{\lambda (\lambda^2+1)} d\lambda + \frac{1}{\Gamma (\sigma+1)}  \int_{-\infty} ^{-1} \left( \frac{1}{\lambda-z} - \frac{\lambda}{ \lambda^2+1} \right) \ln ^{\sigma} (-\lambda) d\lambda,$$
for $|\text{arg}(1+z) | < \pi$. Finally, the other reason we resort to polylogarithms is because they decay at the same rate as the logarithms, at least for positive integer order (this follows directly from the inversion/reflection formula \cite[(6) of Appendix A.2.7]{Le} together with $\mathrm{Li}_n(0)=0$), namely : 
 \begin{equation}
 \label{limits Li}
 \lim \limits_{x \to +\infty} \frac{\Phi_n(x)}{\ln ^ n (x)} = \frac{1}{n!}, \quad n \in \N.
 \end{equation}

\section{Appendix C. Almost analytic extensions and Helffer-Sj\"ostrand calculus}
\label{Helffer-Appendix}

We refer to \cite{D}, \cite{DG}, \cite{GJ1}, \cite{GJ2}, \cite{MS} for more details. Let $\rho \in \R$ and denote by $\mathcal{S}^{\rho}(\R)$ the class of functions $\varphi$ in $C^{\infty}(\R)$ such that 
\begin{equation}
|\varphi^{(k)}  (x)| \leqslant C_k \langle x \rangle ^{\rho-k}, \quad \text{for all} \ k \in \N.
\label{decay1}
\end{equation}

\begin{Lemma} \cite{D} and \cite{DG}
\label{DGDavies}
Let $\varphi \in \mathcal{S}^{\rho}(\R)$, $\rho \in \R$. Then for every $N \in \Z^+$ and every $c>0$, there exists a smooth function $\tilde{\varphi}_N : \C \to \C$, called an almost analytic extension of $\varphi$, satisfying:
\begin{equation*}
\tilde{\varphi}_N(x+\i 0) = \varphi(x), \quad \forall x \in \R;
\end{equation*}
\begin{equation*}
\mathrm{supp} \ (\tilde{\varphi}_N) \subset \Omega := \{x+\i y : |y| \leqslant c \langle x \rangle \};
\end{equation*}
\begin{equation*}
\tilde{\varphi}_N(x+\i y) = 0, \quad \forall y \in \R \ \mathrm{whenever} \ \varphi(x) = 0;
\end{equation*}
\begin{equation}
\label{continuationFormula}
\forall \ell \in \N \cap [0,N], \Bigg| \frac{\partial \tilde{\varphi}_N}{\partial \overline{z}}(x+\i y) \Bigg| \leqslant c_{\ell} \langle x \rangle ^{\rho -1-\ell} |y|^{\ell} \ \mathrm{for \ some \ constants} \ c_{\ell} >0.
\end{equation}
\end{Lemma}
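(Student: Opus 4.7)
The plan is to construct $\tilde{\varphi}_N$ explicitly via a truncated Taylor expansion in the imaginary direction, regularized by a $y$-cutoff. Choose once and for all $\tau \in C_c^\infty(\R)$ with $\tau \equiv 1$ on $[-1/2,1/2]$ and $\mathrm{supp}(\tau) \subset [-1,1]$, and set
$$\tilde{\varphi}_N(x+\i y) := \tau\!\left(\frac{y}{\langle x\rangle}\right)\sum_{k=0}^N \varphi^{(k)}(x)\,\frac{(\i y)^k}{k!}.$$
The boundary condition $\tilde{\varphi}_N(x+\i 0) = \varphi(x)$ is immediate since $\tau(0)=1$ and only the $k=0$ term survives at $y=0$. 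The support statement $\mathrm{supp}(\tilde{\varphi}_N) \subset \{|y| \leqslant \langle x\rangle\}$ follows from $\mathrm{supp}(\tau)$. The third property (vanishing of $\tilde{\varphi}_N$ off the fiber of $\mathrm{supp}(\varphi)$) follows from the fact that if $\varphi$ vanishes in a neighborhood of $x$, so do all its derivatives.

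Next I would compute $\partial_{\bar z}\tilde{\varphi}_N = \tfrac{1}{2}(\partial_x+\i\partial_y)\tilde{\varphi}_N$. The key observation is a telescoping cancellation: applying $\partial_x+\i\partial_y$ to the Taylor polynomial $\sum_{k=0}^N \varphi^{(k)}(x)(\i y)^k/k!$ (with $\tau$ held constant) yields
$$\sum_{k=0}^N \varphi^{(k+1)}(x)\frac{(\i y)^k}{k!} \;-\; \sum_{k=0}^{N-1}\varphi^{(k+1)}(x)\frac{(\i y)^{k}}{k!} = \varphi^{(N+1)}(x)\frac{(\i y)^N}{N!},$$
which is precisely the leftover Taylor remainder. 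Hence
$$\partial_{\bar z}\tilde{\varphi}_N(x+\i y) = \tfrac{1}{2}\,\tau\!\left(\frac{y}{\langle x\rangle}\right)\varphi^{(N+1)}(x)\frac{(\i y)^N}{N!} + R_N(x,y),$$
where $R_N$ collects the terms in which at least one derivative falls on $\tau(y/\langle x\rangle)$.

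For the main term, since $\varphi \in \mathcal{S}^\rho(\R)$ gives $|\varphi^{(N+1)}(x)|\leqslant C\langle x\rangle^{\rho-N-1}$, we obtain on the support of $\tau(y/\langle x\rangle)$ the bound $C\langle x\rangle^{\rho-1-N}|y|^N$; to obtain \eqref{continuationFormula} for a given $\ell\in\N\cap[0,N]$, I use $|y|^N=|y|^\ell|y|^{N-\ell}\leqslant |y|^\ell\langle x\rangle^{N-\ell}$ on $\mathrm{supp}(\tau(\cdot/\langle x\rangle))$. The remainder $R_N$ is supported on the annular region $\tfrac{1}{2}\langle x\rangle\leqslant |y|\leqslant\langle x\rangle$, where $|y|\asymp\langle x\rangle$; each contribution has the form $(\text{derivative of }\tau) \cdot h(x,y)\cdot \varphi^{(k)}(x)(\i y)^k/k!$ with $|h|\leqslant C/\langle x\rangle$, and so is bounded by $C\langle x\rangle^{\rho-1}$, which on this region equals (up to constants) $\langle x\rangle^{\rho-1-\ell}|y|^\ell$ for every $\ell$. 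Combining the two regions yields \eqref{continuationFormula}.

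The main obstacle is purely bookkeeping: ensuring that the telescoping cancellation is written correctly, that the bounds from $\mathcal{S}^\rho(\R)$ and from the annular localization balance out for every $\ell\leqslant N$, and that the constants $c_\ell$ can be chosen independently of $x,y$. There is no conceptual difficulty beyond this — the construction is just the standard Borel-type almost analytic extension adapted to the symbol class $\mathcal{S}^\rho(\R)$.
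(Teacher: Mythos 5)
Your construction is correct and is exactly the standard one: the paper gives no proof of Lemma \ref{DGDavies}, citing \cite{D} and \cite{DG}, and both of those references build $\tilde{\varphi}_N$ as a truncated Taylor polynomial $\sum_{k\leqslant N}\varphi^{(k)}(x)(\i y)^k/k!$ multiplied by a cutoff in $y/\langle x\rangle$, with the telescoping computation for $\partial_{\bar z}$ and the split into the main term (Taylor remainder on $|y|\leqslant\langle x\rangle$) plus the annular remainder ($|y|\asymp\langle x\rangle$, where $|y|^\ell\asymp\langle x\rangle^\ell$) giving \eqref{continuationFormula}. No gaps; the finitely many terms make a single cutoff (rather than the Hörmander-style per-term cutoffs needed for Borel's theorem) suffice, which you implicitly use correctly.
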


Now let $Q$ be a self-adjoint operator.
\begin{Lemma}
Let $\rho < 0$ and $\varphi \in \mathcal{S}^{\rho}(\R)$. Then for all $k \in \N$ and $N \in \N$:
\begin{equation}
\label{derivative}
\varphi^{(k)}(Q) = \frac{\i (k!)}{2\pi} \int _{\C} \frac{\partial \tilde{\varphi}_N}{\partial \overline{z}} (z) (z-Q)^{-1-k} dz \wedge d\overline{z}
\end{equation}
where the integral exists in the norm topology. For $\rho \geqslant 0$, the following limit exists:
\begin{equation}
\varphi^{(k)}(Q)f = \lim \limits_{R \to \infty} \frac{\i (k!)}{2\pi} \int_{\C} \frac{\partial (\tilde{\varphi\theta_R})_N}{\partial \overline{z}} (z) (z-Q)^{-1-k} f dz \wedge d\overline{z}, \quad \text{for all} \  f \in \mathrm{Dom}[\langle Q \rangle ^{\rho}].
\label{eqn:lem}
\end{equation}
In particular, if $\varphi \in \mathcal{S}^{\rho}(\R)$ with $0 \leqslant \rho < k$ and $\varphi^{(k)}$ is a bounded function, then $\varphi^{(k)}(Q)$ is a bounded operator and \eqref{derivative} holds (with the integral converging in norm).
\end{Lemma}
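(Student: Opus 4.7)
\medskip

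The plan is to first establish the formula \eqref{derivative} for $\rho<0$ and $k=0$, then promote it to general $k\in\N$, and finally handle the case $\rho\geqslant 0$ by truncation. For the norm convergence of the integral in \eqref{derivative}, I would use the spectral bound $\|(z-A)^{-1-k}\|\leqslant |y|^{-1-k}$ together with the estimate \eqref{continuationFormula} applied with some $\ell>k$; this gives an integrand controlled by $c_\ell\langle x\rangle^{\rho-1-\ell}|y|^{\ell-1-k}$. Since $\mathrm{supp}(\tilde{\varphi}_N)\subset\{|y|\leqslant\langle x\rangle\}$, integrating in $y$ yields a factor $\langle x\rangle^{\ell-k}$, so the $x$-integral behaves like $\int\langle x\rangle^{\rho-1-k}\,dx$, which converges when $\rho<0$ (indeed as soon as $\rho<k$). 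This justifies the integral as a Bochner integral in $\mathscr{B}(\mathscr{H})$.

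To identify the integral with $\varphi(A)$ when $k=0$, I would apply the spectral theorem and reduce to the scalar identity
\begin{equation*}
\varphi(\lambda)=\frac{\i}{2\pi}\int_{\C}\frac{\partial\tilde{\varphi}_N}{\partial\overline{z}}(z)\,(z-\lambda)^{-1}\,dz\wedge d\overline{z},\qquad \lambda\in\R.
\end{equation*}
This is the Cauchy--Pompeiu formula: for compactly supported $\tilde{\varphi}_N$ it is standard, and for our $\tilde{\varphi}_N$ (only decaying as in \eqref{continuationFormula}) it is obtained by introducing an auxiliary cutoff in $\C$ and letting it tend to $1$, the boundary terms vanishing thanks to the decay $\rho<0$. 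Fubini then gives \eqref{derivative} for $k=0$. For general $k$, I would differentiate the scalar identity $k$ times in $\lambda$, picking up the factor $k!$ and the power $(z-\lambda)^{-1-k}$, and then invoke the spectral theorem again. Equivalently, one can argue by induction on $k$ using $\partial_\lambda(z-\lambda)^{-k}=k(z-\lambda)^{-k-1}$ together with an integration by parts in $z$.

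For the case $\rho\geqslant 0$, the integral in \eqref{derivative} generally fails to converge in norm, so I would regularize: pick $\theta\in C_c^\infty(\R)$ with $\theta\equiv 1$ near $0$, and set $\theta_R(x):=\theta(x/R)$. Then $\varphi\theta_R\in\mathcal{S}^{-\infty}(\R)\subset\mathcal{S}^{\rho'}(\R)$ for any $\rho'<0$, so the already-established case gives
\begin{equation*}
(\varphi\theta_R)^{(k)}(A)=\frac{\i(k!)}{2\pi}\int_{\C}\frac{\partial(\widetilde{\varphi\theta_R})_N}{\partial\overline{z}}(z)\,(z-A)^{-1-k}\,dz\wedge d\overline{z}.
\end{equation*}
On a vector $f\in\mathrm{Dom}[\langle A\rangle^\rho]$, the spectral theorem together with $\varphi\theta_R\to\varphi$ and $(\varphi\theta_R)^{(k)}\to\varphi^{(k)}$ pointwise with the uniform bound $|(\varphi\theta_R)^{(k)}(x)|\leqslant C\langle x\rangle^{\rho-k}\leqslant C\langle x\rangle^\rho$ (obtained by the Leibniz rule applied to $\varphi\theta_R$ using $\varphi\in\mathcal{S}^\rho$ and $|\theta_R^{(j)}|\leqslant C_j$), yields $(\varphi\theta_R)^{(k)}(A)f\to\varphi^{(k)}(A)f$ by dominated convergence against the spectral measure. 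This proves \eqref{eqn:lem}. For the ``in particular'' statement, when $0\leqslant\rho<k$ the convergence analysis of the first paragraph shows the integral converges in norm, and $\varphi^{(k)}\in\mathcal{S}^{\rho-k}(\R)$ is bounded by hypothesis, so $\varphi^{(k)}(A)\in\mathscr{B}(\mathscr{H})$ and the cutoff approximation extends to all of $\mathscr{H}$.

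The main obstacle I anticipate is the rigorous justification of the Cauchy--Pompeiu identity for $\tilde{\varphi}_N$ which is not compactly supported, together with the interchange of the spectral integral and the $\C$-integral; a careful dominated-convergence argument using \eqref{continuationFormula} with $\ell$ chosen sufficiently large is needed so that the same proof works for all $k$ uniformly. The cutoff step for $\rho\geqslant 0$ is then a relatively routine application of the spectral theorem once this core identity is in hand.
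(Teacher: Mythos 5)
The paper offers no proof of this lemma; it is stated as a known fact in Appendix~C with a pointer to the references \cite{D}, \cite{DG}, \cite{GJ1}, \cite{GJ2}, \cite{MS}, so there is nothing internal to compare against. Your argument is the standard one found in those sources (Cauchy--Pompeiu identity for the scalar resolvent, spectral theorem, Fubini for the Bochner integral, and a dilation cutoff $\theta_R$ for $\rho\geqslant 0$), and it is essentially correct. One small point worth making explicit: the norm convergence of the integral in \eqref{derivative} uses \eqref{continuationFormula} with some $\ell$ satisfying $k<\ell\leqslant N$, so it requires $N\geqslant k+1$; the lemma's phrasing ``for all $N\in\N$'' should be read with that understanding, and you correctly absorb this by ``choosing $\ell$ sufficiently large.'' Also, in the ``in particular'' case $0\leqslant\rho<k$ you should note that passing from the truncated identity to \eqref{derivative} itself requires either dominated convergence of $\partial_{\overline z}(\widetilde{\varphi\theta_R})_N\to\partial_{\overline z}\tilde\varphi_N$ inside the integral, or a direct Cauchy--Pompeiu argument that does not cut off; you gesture at this but it would be worth a sentence. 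These are presentation issues, not gaps.
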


\begin{proposition} \cite{GJ1} Let $T$ be a bounded self-adjoint operator satisfying $T \in \mathcal{C}^1(Q)$. Then :
\begin{equation}
\label{use1}
[T,(z-Q)^{-1}]_{\circ} = (z-Q)^{-1}[T,Q]_{\circ}(z-Q)^{-1},
\end{equation}
and for any $\varphi \in \mathcal{S}^{\rho}(\R)$ with $\rho < 1$, $T \in \mathcal{C}^1(\varphi(Q))$ and
\begin{equation}
\label{use2}
[T,\varphi(Q)]_{\circ} = \const \int_{\C} \frac{\partial \tilde{\varphi}_N}{\partial \overline{z}} (z-Q)^{-1}[T,Q]_{\circ}(z-Q)^{-1} \dz.
\end{equation}
\label{goddam}
\end{proposition}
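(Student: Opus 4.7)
The plan is to first establish the resolvent commutator identity \eqref{use1}, then show that the integral on the right-hand side of \eqref{use2} converges absolutely in operator norm to some bounded operator $B$, and finally identify $B$ with the bounded extension of the quadratic form $[T,\varphi(A)]$ via a cutoff approximation. For \eqref{use1}, since $T\in\mathcal{C}^1(A)$, Proposition \ref{Prop629} guarantees that $T$ preserves $\mathrm{Dom}[A]$ and that $[T,A]_\circ$ is bounded. For $z\in\C\setminus\R$ and $\psi\in\mathscr{H}$, setting $\phi:=(z-A)^{-1}\psi\in\mathrm{Dom}[A]$, both $T\phi\in\mathrm{Dom}[A]$ and $(z-A)T\phi$ are well-defined, and a direct calculation gives $(z-A)T\phi-T(z-A)\phi=(TA-AT)\phi=[T,A]_\circ\phi$. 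Multiplying on the left by $(z-A)^{-1}$ produces \eqref{use1}.

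Next I would show that the operator
\begin{equation*}
B:=\const\int_{\C}\frac{\partial\tilde{\varphi}_N}{\partial\overline{z}}(z)\,(z-A)^{-1}[T,A]_\circ(z-A)^{-1}\,\dz
\end{equation*}
is well-defined and bounded whenever $\rho<1$. Combining $\|(z-A)^{-1}\|\leq|\mathrm{Im}(z)|^{-1}$ with \eqref{continuationFormula} at $\ell=2$ (which requires $N\geq2$), the norm of the integrand is dominated by $c_2\|[T,A]_\circ\|\langle x\rangle^{\rho-3}$, uniformly in $y$. Since $\mathrm{supp}(\tilde{\varphi}_N)\subset\{|y|\leq\langle x\rangle\}$, the $y$-integration contributes a factor $2\langle x\rangle$, leaving $\int_{\R}\langle x\rangle^{\rho-2}\,dx<\infty$, which holds precisely because $\rho<1$. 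Hence $B\in\mathscr{B}(\mathscr{H})$ and the integral converges as a Bochner integral in the operator-norm topology.

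To identify $B$ with the bounded extension of the form $[T,\varphi(A)]$ on $\mathrm{Dom}[\varphi(A)]^2$, and thereby conclude $T\in\mathcal{C}^1(\varphi(A))$ from Proposition \ref{Prop629}, I would argue by cutoff approximation. Pick $\theta\in C_c^\infty(\R)$ equal to $1$ near the origin and set $\theta_R(x):=\theta(x/R)$, $\varphi_R:=\varphi\theta_R\in C_c^\infty(\R)$. For each finite $R$ the operator $\varphi_R(A)$ is bounded, \eqref{derivative} with $k=0$ applies with norm convergence, and \eqref{use1} together with a routine Fubini argument yields
\begin{equation*}
[T,\varphi_R(A)]=\const\int_{\C}\frac{\partial\widetilde{\varphi_R}_N}{\partial\overline{z}}(z)\,(z-A)^{-1}[T,A]_\circ(z-A)^{-1}\,\dz=:B_R.
\end{equation*}
Passing to the limit $R\to\infty$: on one hand, functional calculus and the uniform bound $|\varphi_R|\leq\|\theta\|_\infty|\varphi|$ give $\varphi_R(A)\phi\to\varphi(A)\phi$ and $\overline{\varphi_R}(A)\psi\to\overline{\varphi}(A)\psi$ for every $\psi,\phi\in\mathrm{Dom}[\varphi(A)]$, so $\langle\psi,B_R\phi\rangle=\langle T\psi,\varphi_R(A)\phi\rangle-\langle\overline{\varphi_R}(A)\psi,T\phi\rangle$ converges to the value of the form $[T,\varphi(A)]$ on $(\psi,\phi)$; on the other hand $B_R\to B$ in operator norm by dominated convergence inside the integral.

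The principal obstacle I anticipate is justifying the norm convergence $B_R\to B$: one needs the almost analytic extensions $\widetilde{\varphi_R}_N$ to satisfy \eqref{continuationFormula} with constants $c_\ell$ independent of $R$, together with the pointwise limit $\partial\widetilde{\varphi_R}_N/\partial\overline{z}\to\partial\tilde{\varphi}_N/\partial\overline{z}$ on $\C$. Both properties follow from the standard construction once one observes that $\|\theta_R^{(k)}\|_\infty\leq\|\theta^{(k)}\|_\infty$ for all $R\geq1$, so the Leibniz rule places $\{\varphi_R\}_{R\geq1}$ in a bounded subset of $\mathcal{S}^\rho(\R)$ with respect to its natural Fr\'echet seminorms, and the construction of Lemma \ref{DGDavies} is continuous with respect to this data.
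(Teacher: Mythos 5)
The paper does not contain its own proof of Proposition~\ref{goddam}; it simply cites \cite{GJ1}, so only the correctness of your argument can be assessed.

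Your proof of \eqref{use1} is correct. Since $T\in\mathcal{C}^1(A)$ implies $T$ preserves $\mathrm{Dom}[A]$ and $TA-AT$ extends to the bounded operator $[T,A]_{\circ}$ (Proposition~\ref{Prop629}), the computation $(z-A)T\phi-T(z-A)\phi=(TA-AT)\phi=[T,A]_{\circ}\phi$ with $\phi=(z-A)^{-1}\psi$ is legitimate, and left-multiplication by $(z-A)^{-1}$ yields \eqref{use1}. Your norm-convergence estimate for $B$ is also correct: $\|(z-A)^{-1}\|\leqslant|y|^{-1}$ and \eqref{continuationFormula} with $\ell=2$ bound the integrand by a constant times $\langle x\rangle^{\rho-3}$; integrating over $|y|\leqslant\langle x\rangle$ gives $\langle x\rangle^{\rho-2}$, which is integrable precisely when $\rho<1$. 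The cutoff strategy $\varphi_R=\varphi\theta_R$, applying \eqref{use1} inside the Helffer--Sj\"ostrand formula for $\varphi_R(A)$ and passing to the limit, is the right approach and matches what \eqref{eqn:lem} already anticipates.

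However, the justification you offer for the uniform boundedness of $\{\varphi_R\}_{R\geqslant1}$ in $\mathcal{S}^{\rho}(\R)$ is insufficient as stated. From $\|\theta_R^{(k)}\|_\infty\leqslant\|\theta^{(k)}\|_\infty$ alone, the Leibniz rule gives only
\begin{equation*}
\big|(\varphi\theta_R)^{(k)}(x)\big|\leqslant\sum_{j=0}^{k}\binom{k}{j}C_j\|\theta^{(k-j)}\|_\infty\langle x\rangle^{\rho-j},
\end{equation*}
whose $j=0$ term is merely $O(\langle x\rangle^{\rho})$, not the required $O(\langle x\rangle^{\rho-k})$. What actually saves the argument is the support structure: since $\theta\equiv1$ near the origin, $\theta_R^{(k-j)}$ for $k-j\geqslant1$ vanishes outside a region $c_1R\leqslant|x|\leqslant c_2R$, on which $R$ is comparable to $\langle x\rangle$, so the prefactor $R^{-(k-j)}$ supplies the missing $\langle x\rangle^{-(k-j)}$ decay. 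With this repaired, $\{\varphi_R\}$ is indeed bounded in $\mathcal{S}^{\rho}(\R)$, the constants $c_\ell$ of \eqref{continuationFormula} for $\widetilde{\varphi_R}_N$ can be taken independent of $R$, dominated convergence gives $B_R\to B$ in norm, and the form identification closes the proof as you describe. Everything else in the proposal is sound; note also that $T\in\mathcal{C}^1(\varphi(A))$ only makes sense literally when $\varphi$ is real-valued, which is implicitly assumed.
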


\end{document}